\def\input@path{{figures/}}\makeatother
\newtheorem{theorem}{Theorem}
\newtheorem{corollary}[theorem]{Corollary}
\newtheorem{proposition}[theorem]{Proposition}
\newtheorem{lemma}[theorem]{Lemma}
\newtheorem{conjecture}[theorem]{Conjecture}
\newtheorem*{theorem*}{Theorem}
\theoremstyle{definition}
\newtheorem{example}[theorem]{Example}
\newtheorem{remark}[theorem]{Remark}
\newcommand{\R}{\mathbb{R}} 
\newcommand{\N}{\mathbb{N}} 
\newcommand{\TP}{\mathbb{TP}} 
\newcommand{\fS}{\mathfrak{S}} 
\newcommand{\cI}{\mathcal{I}} 
\renewcommand{\b}[1]{\mathbf{#1}} 
\newcommand{\set}[2]{\left\{ #1 \;\middle|\; #2 \right\}} 
\newcommand{\bigset}[2]{\big\{ #1 \;\big|\; #2 \big\}} 
\newcommand{\ssm}{\smallsetminus} 
\newcommand{\one}{{1\!\!1}} 
\newcommand{\eqdef}{\mbox{\,\raisebox{0.2ex}{\scriptsize\ensuremath{\mathrm:}}\ensuremath{=}\,}} 
\DeclareMathOperator{\conv}{conv} 
\newcommand{\fref}[1]{Figure~\ref{#1}} 
\newcommand{\ie}{\textit{i.e.}~} 
\definecolor{darkblue}{rgb}{0,0,0.7} 
\definecolor{green}{RGB}{57,181,74} 
\definecolor{violet}{RGB}{147,39,143} 
\newcommand{\darkblue}{\color{darkblue}} 
\newcommand{\defn}[1]{\textsl{\darkblue #1}} 
\def\part{\@startsection{part}{1}%
\z@{.7\linespacing\@plus\linespacing}{.8\linespacing}%
{\LARGE\sffamily\centering}}
\def\l@section{\@tocline{1}{2pt}{0pc}{}{}}
\let\oldtocpart=\tocpart
\renewcommand{\tocpart}[2]{\bf\large\oldtocpart{#1}{#2}}
\let\oldtocsection=\tocsection
\renewcommand{\tocsection}[2]{\bf\oldtocsection{#1}{#2}}
\newcommand{\llb}{[\![} 
\newcommand{\rrb}{]\!]} 
\newcommand{\signature}{\varepsilon}
\newcommand{\sq}{{\scalebox{.5}{$\square$}}}
\newcommandx{\polygonS}[1][1=\signature]{\mathrm{P}_{\!\sq}^{#1}}
\newcommandx{\polygonC}[1][1=\signature]{\mathrm{P}_{{\!\bullet\!\!\!-\!\circ}}^{#1}}
\newcommandx{\G}[1][1 = \signature]{\mathrm{G}^{#1}}
\newcommandx{\GIJ}[3][1 = I_\bullet, 2 = J_\circ, 3 = \signature]{\mathrm{G}_{#1,#2}^{#3}}
\newcommandx{\IFGIJ}[3][1 = I_\bullet, 2 = J_\circ, 3 = \signature]{\mathrm{F}_{#1,#2}^{#3}}
\newcommandx{\Lat}[1][1 = \signature]{\mathrm{L}^{#1}}
\newcommandx{\LatIJ}[3][1 = I_\bullet, 2 = J_\circ, 3 = \signature]{\mathrm{L}_{#1,#2}^{#3}}
\newcommand{\tree}{\mathsf{t}} 
\newcommandx{\treeMin}{\mathsf{tmin}} 
\newcommandx{\treeMax}{\mathsf{tmax}} 
\newcommandx{\treeMinIJ}[3][1 = I_\bullet, 2 = J_\circ, 3 = \signature]{\treeMin_{#1,#2}^{#3}} 
\newcommandx{\treeMaxIJ}[3][1 = I_\bullet, 2 = J_\circ, 3 = \signature]{\treeMax_{#1,#2}^{#3}} 
\newcommand{\forest}{\mathsf{f}} 
\newcommand{\cat}{\mathrm{cat}} 
\newcommand{\subpoly}{\mathrm{U}} 
\newcommandx{\subpolyIJ}[2][1 = I_\bullet, 2 = J_\circ]{\subpoly_{#1,#2}} 
\newcommandx{\complexIJ}[3][1 = I_\bullet, 2 = J_\circ, 3 = \signature]{\mathrm{C}_{#1,#2}^{#3}} 
\newcommandx{\CambTriang}[1][1 = \signature]{\mathcal{T}^{#1}} 
\newcommandx{\CambTriangIJ}[3][1 = I_\bullet, 2 = J_\circ, 3 = \signature]{\mathcal{T}_{#1,#2}^{#3}} 
\newcommand{\canopy}{\mathrm{can}} 
\newcommandx{\Asso}[4][1 = I_\bullet, 2 = J_\circ, 3 = \signature, 4 = h]{\mathsf{Asso}_{#1,#2}^{#3}(#4)} 
\newcommand{\newleftrightarrow}{\mathbin{\tikz [baseline=0.2em] \draw [<->] (-.4em,.4em) -- (.4em,.4em);}} 
\newcommand{\newupdownarrow}{\mathbin{\tikz [thin, baseline=-0.2em] \draw [<->] (0em,-.3em) -- (0em,.3em);}} 
\newcommand{\vertmirror}[1]{#1^{\newleftrightarrow}} 
\newcommand{\horimirror}[1]{#1^{\!\ \newupdownarrow\!\ }} 
\newcommand{\SSS}{\reflectbox{$\mathsf{Z}$}} 
\newcommand{\ZZZ}{\mathsf{Z}} 
\title{Cambrian triangulations and their tropical realizations}
\thanks{Partially supported by the French ANR grants SC3A~(15\,CE40\,0004\,01) and CAPPS~(17\,CE40\,0018).}
\author{Vincent Pilaud}
\address{CNRS \& LIX, \'Ecole Polytechnique, Palaiseau}
\email{vincent.pilaud@lix.polytechnique.fr}
\urladdr{\url{http://www.lix.polytechnique.fr/~pilaud/}}
\begin{document}

\begin{abstract}
This paper develops a Cambrian extension of the work of C.~Ceballos, A.~Padrol and C.~Sarmiento on $\nu$-Tamari lattices and their tropical realizations.
For any signature~$\signature \in \{\pm\}^n$, we consider a family of $\signature$-trees in bijection with the triangulations of the $\signature$-polygon. These $\signature$-trees define a flag regular triangulation~$\CambTriang$ of the subpolytope~${\conv \set{(\b{e}_{i_\bullet}, \b{e}_{j_\circ})}{0 \le i_\bullet < j_\circ \le n+1}}$ of the product of simplices~$\triangle_{\{0_\bullet, \dots, n_\bullet\}} \times \triangle_{\{1_\circ, \dots, (n+1)_\circ\}}$. The oriented dual graph of the triangulation~$\CambTriang$ is the Hasse diagram of the (type~$A$) $\signature$-Cambrian lattice of N.~Reading. For any~$I_\bullet \subseteq \{0_\bullet, \dots, n_\bullet\}$ and~$J_\circ \subseteq \{1_\circ, \dots, (n+1)_\circ\}$, we consider the restriction~$\CambTriangIJ$ of the triangulation~$\CambTriang$ to the face~$\triangle_{I_\bullet} \times \triangle_{J_\circ}$. Its dual graph is naturally interpreted as the increasing flip graph on certain $(\signature, I_\bullet, J_\circ)$-trees, which is shown to be a lattice generalizing in particular the $\nu$-Tamari lattices in the Cambrian setting. Finally, we present an alternative geometric realization of~$\CambTriangIJ$ as a polyhedral complex induced by a tropical hyperplane arrangement.
\end{abstract}

\vspace*{-.8cm}

\maketitle


The Tamari lattice is a fundamental structure on Catalan objects (such as triangulations, binary trees, or Dyck paths). Introduced by D.~Tamari in~\cite{Tamari}, it has been extensively studied and extended in several directions, see in particular~\cite{TamariFestschrift} and the references therein. Our objective is to explore the connection between two generalizations of the Tamari lattice: the (type~$A$) Cambrian lattices of N.~Reading~\cite{Reading-CambrianLattices} and the $\nu$-Tamari lattices of L.-F.~Pr\'eville-Ratelle and X.~Viennot~\cite{PrevilleRatelleViennot}. These two generalizations have strong algebraic roots, in connection to cluster algebras~\cite{FominZelevinsky-ClusterAlgebrasI, FominZelevinsky-ClusterAlgebrasII} and multivariate diagonal harmonics~\cite{Bergeron-multivariateDiagonalCoinvariantSpaces, BergeronPrevilleRatelle}.

This paper heavily relies on the work of C.~Ceballos, A.~Padrol and C.~Sarmiento on the geometry of $\nu$-Tamari lattices~\cite{CeballosPadrolSarmiento}. They start from a family of non-crossing alternating trees in bijection with the triangulations of the $(n+2)$-gon. These trees define a flag regular triangulation~$\CambTriang[]$ of the subpolytope $U \eqdef {\conv \set{(\b{e}_{i_\bullet}, \b{e}_{j_\circ})}{0 \le i_\bullet < j_\circ \le n+1}}$ of the product of simplices~${\triangle_{\{0_\bullet, \dots, n_\bullet\}} \times \triangle_{\{1_\circ, \dots, (n+1)_\circ\}}}$. The dual graph of this triangulation~$\CambTriang[]$ is the Hasse diagram of the Tamari lattice. Note that this interpretation of the Tamari lattice as the dual graph of the non-crossing triangulation is ubiquitous in the literature as discussed in~\cite[Sect.~1.4]{CeballosPadrolSarmiento}. For any subsets~$I_\bullet \subseteq \{0_\bullet, \dots, n_\bullet\}$ and~$J_\circ \subseteq \{1_\circ, \dots, (n+1)_\circ\}$, they consider the restriction~$\CambTriangIJ[I_\bullet][J_\circ][]$ of the triangulation~$\CambTriang[]$ to the face~$\triangle_{I_\bullet} \times \triangle_{J_\circ}$. The simplices of~$\CambTriangIJ[I_\bullet][J_\circ][]$ correspond to certain non-crossing alternating $(I_\bullet, J_\circ)$-trees which are in bijection with Dyck paths above a fixed path~$\nu(I_\bullet, J_\circ)$. Moreover, the dual graph of~$\CambTriangIJ[I_\bullet][J_\circ][]$ is the flip graph on $(I_\bullet, J_\circ)$-trees, isomorphic to the \mbox{$\nu(I_\bullet, J_\circ)$-Tamari} poset of~\cite{PrevilleRatelleViennot}. This poset actually embeds as an interval of the classical Tamari lattice and is therefore itself a lattice. This interpretation provides three geometric realizations of the $\nu(I_\bullet, J_\circ)$-Tamari lattice~\cite[Thm.~1.1]{CeballosPadrolSarmiento}: as the dual of the regular triangulation~$\CambTriangIJ[I_\bullet][J_\circ][]$, as the dual of a coherent mixed subdivision of a generalized permutahedron, and as the edge graph of a polyhedral complex induced by a tropical hyperplane arrangement.

Our objective is to extend this approach in the type~$A$ Cambrian setting. For any signature~${\signature \in \{\pm\}^n}$, we consider a family of $\signature$-trees in bijection with the triangulations of the $\signature$-polygon. These $\signature$-trees define a flag regular triangulation~$\CambTriang$ of~$U$ whose dual graph is the Hasse diagram of the (type~$A$) $\signature$-Cambrian lattice of N.~Reading~\cite{Reading-CambrianLattices}. In contrast to the classical Tamari case (obtained when $\signature = {-}^n$), we are not aware that this triangulation of~$U$ was considered earlier in the literature and the proof of its regularity is a little more subtle in the Cambrian case. For any~$I_\bullet \subseteq \{0_\bullet, \dots, n_\bullet\}$ and~$J_\circ \subseteq \{1_\circ, \dots, (n+1)_\circ\}$, we then consider the restriction~$\CambTriangIJ$ of the triangulation~$\CambTriang$ to the face~$\triangle_{I_\bullet} \times \triangle_{J_\circ}$. Its simplices correspond to certain~$(\signature, I_\bullet, J_\circ)$-trees and its dual graph is the increasing flip graph on these $(\signature, I_\bullet, J_\circ)$-trees. Our main combinatorial result is that this increasing flip graph is still an interval of the $\signature$-Cambrian lattice in general. The proof is however more involved than in the classical case ($\signature = {-}^n$) since this interval does not anymore correspond to a descent class in general. Finally, we mimic the method of~\cite[Sect.~5]{CeballosPadrolSarmiento} to obtain an alternative geometric realization of~$\CambTriangIJ$ as a polyhedral complex induced by a tropical hyperplane arrangement.

\section{$(\signature, I_\bullet, J_\circ)$-trees and the $(\signature, I_\bullet, J_\circ)$-complex}

This section defines two polygons and certain families of trees associated to a signature~${\signature \in \{\pm\}^n}$.

\subsection{Two $\signature$-polygons}

We consider three decorated copies of the natural numbers: the squares~$\N_\sq$, the blacks~$\N_\bullet$ and the whites~$\N_\circ$.
For~$n \in \N$, we use the standard notation $[n] \eqdef \{1, \dots, n\}$ and define~${\llb n ] \eqdef \{0, \dots, n\}}$, ${[ n \rrb \eqdef \{1, \dots, n+1\}}$ and~$\llb n \rrb \eqdef \{0, \dots, n+1\}$.
We write~$[n_\sq]$, $[n_\bullet]$, $[n_\circ]$ and so on for the decorated versions of these intervals.
Fix a signature~$\signature \in \{\pm\}^n$. We consider two convex polygons associated to the signature~$\signature$ as follows:
\begin{itemize}
\item a $(n+2)$-gon~$\polygonS$ with square vertices labeled by~$\llb n_\sq \rrb$ from left to right and where vertex~$i_\sq$ is above the segment~$(0_\sq, (n+1)_\sq)$ if~$\signature_i = {+}$ and below it if~$\signature_i = {-}$.
\item a $(2n+2)$-gon~$\polygonC$ with black or white vertices, obtained from $\polygonS$ by replacing the square vertex~$0_\sq$ (resp.~$(n+1)_\sq$) by the black vertex~$0_\bullet$ (resp.~white vertex~$(n+1)_\circ$), and splitting each other square vertex~$i_\sq$ into a pair of white and black vertices~$i_\circ$ and~$i_\bullet$ (such that the vertices of~$\polygonC$ are alternatively colored black~and~white). The black (resp.~white) vertices of~$\polygonC$ are labeled by~$\llb n_\bullet ]$ (resp.~$[ n_\circ \rrb$) from left to right.
\end{itemize}
Examples of these polygons are represented in \fref{fig:polygons} for the signature~$\signature = {-}{+}{+}{-}{+}{-}{-}{+}$.

\begin{figure}[h]
	\capstart
	\centerline{\includegraphics[scale=.9]{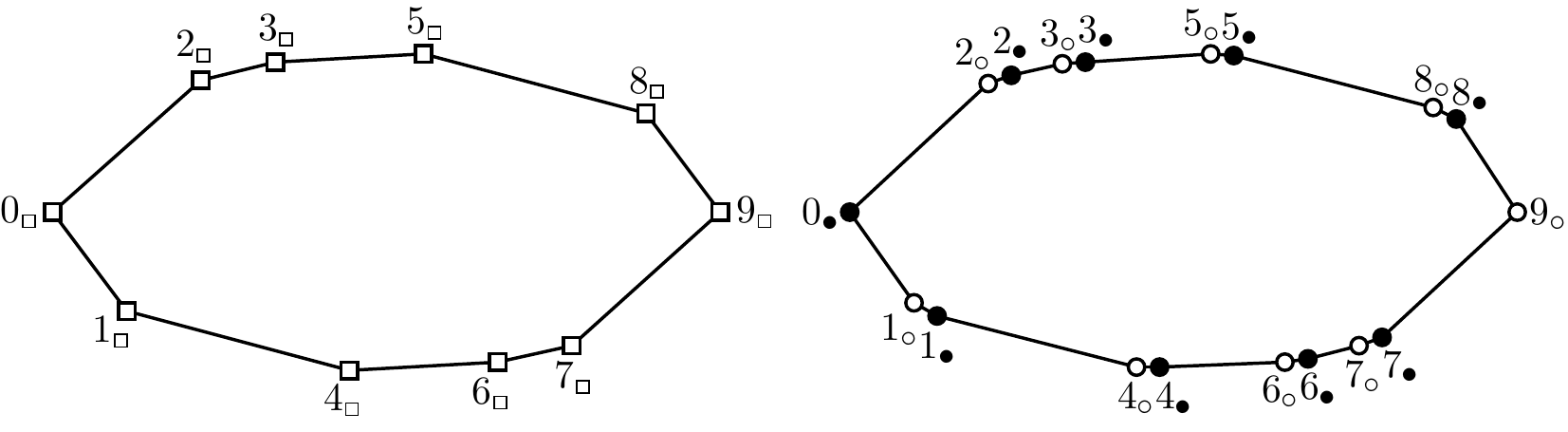}}
	\caption{The polygons~$\polygonS$ (left) and~$\polygonC$ (right) for the signature~$\signature = {-}{+}{+}{-}{+}{-}{-}{+}$.}
	\label{fig:polygons}
\end{figure}

\subsection{$(\signature, I_\bullet, J_\circ)$-trees}

All throughout the paper, we consider~$I_\bullet \subseteq \llb n_\bullet ]$ and~$J_\circ \subseteq [ n_\circ \rrb$ and we always assume that~${\min(I_\bullet) < \min(J_\circ)}$ and~${\max(I_\bullet) < \max(J_\circ)}$.
Consider the graph~$\GIJ$ with vertices~$I_\bullet \cup J_\circ$ and edges~$\set{(i_\bullet, j_\circ)}{i_\bullet \in I_\bullet, \, j_\circ \in J_\circ, \, i_\bullet < j_\circ}$.
Note that this graph is geometric: its vertices are considered as vertices of~$\polygonC$ and its edges are considered as straight edges in~$\polygonC$.
A subgraph of~$\GIJ$ is \defn{non-crossing} if no two of its edges cross in their interior.

\begin{proposition}
\label{prop:trees}
Any maximal non-crossing subgraph of~$\GIJ$ is a spanning tree of~$\GIJ$.
\end{proposition}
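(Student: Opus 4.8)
The plan is to prove two things: (1) every maximal non-crossing subgraph of $\GIJ$ is connected and spanning, and (2) it is acyclic; together these give that it is a spanning tree. Throughout, I would think of the vertices $I_\bullet \cup J_\circ$ as placed on the boundary of the polygon $\polygonC$, with the black vertices of $I_\bullet$ on one arc and the white vertices of $J_\circ$ on the complementary arc (recall $\polygonC$ is bipartite with blacks and whites alternating, and that all edges of $\GIJ$ go from a black $i_\bullet$ to a white $j_\circ$ with $i_\bullet < j_\circ$). The assumptions $\min(I_\bullet) < \min(J_\circ)$ and $\max(I_\bullet) < \max(J_\circ)$ are exactly what guarantees that the "extreme" edges exist and pin things down at both ends.

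For acyclicity, I would argue that a non-crossing subgraph of $\GIJ$ has at most $|I_\bullet| + |J_\circ| - 1$ edges. This is the standard planarity/Euler-type bound for a non-crossing graph on points in convex position: a maximal such graph is a triangulation-like structure of a convex polygon, but here the bipartite constraint (no edge inside the black arc, none inside the white arc) forces every bounded face to be a quadrilateral using two black and two white vertices, and a careful count of faces via Euler's formula yields exactly $|I_\bullet| + |J_\circ| - 1$ edges when the graph is connected and spanning. Hence if such a graph had a cycle it would have to be disconnected or non-spanning, which I rule out next; alternatively, one can directly observe that any cycle in a bipartite planar graph drawn on convex points must enclose area and can be "subdivided" by a non-crossing chord, contradicting maximality only after establishing connectivity — so it is cleaner to do connectivity first.

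For connectivity and spanning: suppose $T$ is a maximal non-crossing subgraph and, for contradiction, that some vertex $v$ is isolated, or more generally that $T$ has two or more connected components. Take two components $C_1, C_2$; I want to find an edge of $\GIJ$ joining them that crosses no edge of $T$, contradicting maximality. The key geometric lemma is: among all the "gaps" along the boundary of $\polygonC$ between consecutive vertices of $I_\bullet \cup J_\circ$, pick an appropriate black vertex $i_\bullet$ and white vertex $j_\circ$ with $i_\bullet < j_\circ$ lying in different components such that the chord $(i_\bullet, j_\circ)$ can be drawn without crossing — for instance by taking $i_\bullet$ and $j_\circ$ to be "visible" to each other (consecutive on the boundary of a face of $T$). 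Here the hypotheses $\min(I_\bullet)<\min(J_\circ)$ and $\max(I_\bullet)<\max(J_\circ)$ ensure that the needed pair with $i_\bullet < j_\circ$ actually exists: walking along the boundary, every maximal run of vertices in one color arc is flanked by vertices of the other color that it can legally connect to. Making this visibility argument precise — exhibiting, in any disconnected non-crossing bipartite graph on these convex points, a non-crossing edge of $\GIJ$ between two components — is the main obstacle, and I would handle it by induction on $n$ (or on $|I_\bullet|+|J_\circ|$), peeling off an "ear" vertex of the polygon.

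Finally, once connectivity, spanning, and the edge bound $|E(T)| \le |V(T)| - 1$ are all in hand, a connected graph on $|V(T)|$ vertices with at most $|V(T)|-1$ edges is a tree, completing the proof. The only genuinely new ingredient compared to the classical alternating-tree case of Ceballos--Padrol--Sarmiento is that the black/white arcs of $\polygonC$ are interleaved in an order dictated by $\signature$ rather than cleanly separated, so the visibility argument must be phrased in terms of the cyclic boundary order of $\polygonC$ rather than the linear order on labels; I expect this to be a routine but slightly fiddly adaptation.
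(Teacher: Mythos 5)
Your write-up is a plan, not a proof, and you say so yourself at the two places where the real work lies: the Euler-formula count ("a careful count of faces... yields exactly $|I_\bullet|+|J_\circ|-1$ edges") and the visibility lemma ("making this visibility argument precise... is the main obstacle"). Both of these are exactly where the Cambrian interleaving makes things delicate, and neither is carried out. In particular, the assertion that every bounded face of a maximal non-crossing subgraph is a quadrilateral with two black and two white vertices is not justified; the faces are bounded partly by edges of the graph and partly by arcs of $\polygonC$, and whether a large face can be subdivided by a legal chord $(i_\bullet,j_\circ)$ with $i_\bullet<j_\circ$ is precisely the content you would need to establish. You also half-acknowledge a circularity: you want to use the edge bound to deduce acyclicity, but you want connectivity to prove the edge bound, and you want the edge bound to help prove connectivity. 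That loop would need to be broken cleanly, and it isn't here.

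The paper's argument avoids all of this with a single ear-peeling induction on $|I_\bullet|+|J_\circ|$. Take $i_\bullet=\max(I_\bullet)$ and let $j_\circ$ be the first vertex of $J_\circ$ after $i_\bullet$ with $\signature_i=\signature_j$ (or $\max(J_\circ)$ if none); then $(i_\bullet,j_\circ)$ is a boundary edge of $\conv(I_\bullet\cup J_\circ)$, and every other edge of $\GIJ$ at $i_\bullet$ crosses every other edge of $\GIJ$ at $j_\circ$. Maximality forces $(i_\bullet,j_\circ)$ into the subgraph and forces one of $i_\bullet,j_\circ$ to be a leaf; delete it and induct. This handles connectivity, spanning, and acyclicity at once, with no face counting and no separate visibility argument. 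Note that your own sketch already gestures at "peeling off an ear vertex" as a way to prove visibility by induction; if you develop that one idea you essentially arrive at the paper's proof, in which case the Euler count and the separate acyclicity step become unnecessary. If instead you want to keep your two-part structure, you should be aware that acyclicity can be deduced more cleanly from the uniqueness of non-crossing perfect matchings on the support of any alleged cycle (the paper records exactly this observation in a remark after Lemma~\ref{lem:noncrossingmatchings}), rather than from an Euler-type face count.
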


\begin{proof}
The proof works by induction on~$|I_\bullet|+|J_\circ|$. The result is immediate when~$|I_\bullet| = |J_\circ| = 1$. Assume now for instance that~$|I_\bullet| > 1$ (the case $|I_\bullet| = 1$ and~$|J_\circ| > 1$ is similar). Let~$i_\bullet \eqdef \max(I_\bullet)$ and~$j_\circ \eqdef \min \big( \set{j_\circ \in J_\circ}{i_\bullet < j_\circ \text{ and } \signature_i = \signature_j} \cup \{\max(J_\circ)\} \big)$. Note that our choice of~$j_\circ$ ensures that~$(i_\bullet, j_\circ)$ is a boundary edge of~$\conv(I_\bullet \cup J_\circ)$. Moreover, any edge of~$\GIJ$ incident to~$i_\bullet$ is of the form~$(i_\bullet, j_\circ')$ for~$i_\bullet < j_\circ'$ while any edge of~$\GIJ$ incident to~$j_\circ$ is of the form~$(i'_\bullet, j_\circ)$ for~$i'_\bullet \le i_\bullet$ (by maximality of~$i_\bullet$). Therefore, all edges of~$\GIJ \ssm \{(i_\bullet, j_\circ)\}$ incident to~$i_\bullet$ cross all edges of~$\GIJ \ssm \{(i_\bullet, j_\circ)\}$ incident to~$j_\circ$. Consider now a maximal non-crossing subgraph~$\tree$ of~$\GIJ$. Then~$\tree$ contains the edge~$(i_\bullet, j_\circ)$ (since~$\tree$ is maximal) and either~$i_\bullet$ or~$j_\circ$ is a leaf in~$\tree$ (since~$\tree$ is non-crossing). Assume for example that~$i_\bullet$ is a leaf and let~$I_\bullet' \eqdef I_\bullet \ssm \{i_\bullet\}$. Then~$\tree \ssm \{(i_\bullet, j_\circ)\}$ is a maximal non-crossing subgraph of~$\GIJ[I_\bullet']$ (the maximality is ensured from the fact that~$(i_\bullet, j_\circ)$ is a boundary edge of~$\conv(I_\bullet \cup J_\circ)$). By induction, $\tree \ssm \{(i_\bullet, j_\circ)\}$ is thus a spanning tree of~$\GIJ[I_\bullet']$, so that~$\tree$ is a spanning tree of~$\GIJ$.
\end{proof}

In accordance to Proposition~\ref{prop:trees}, we define a \defn{$(\signature, I_\bullet, J_\circ)$-forest} to be a non-crossing subgraph of~$\GIJ$, and a \defn{$(\signature, I_\bullet, J_\circ)$-tree} to be a maximal $(\signature, I_\bullet, J_\circ)$-forest. Note that a $(\signature, I_\bullet, J_\circ)$-tree has~$|I_\bullet| + |J_\circ| - 1$ edges. Examples can be found in \fref{fig:forestTree}.

\begin{figure}[t]
	\capstart
	\centerline{\includegraphics[scale=.9]{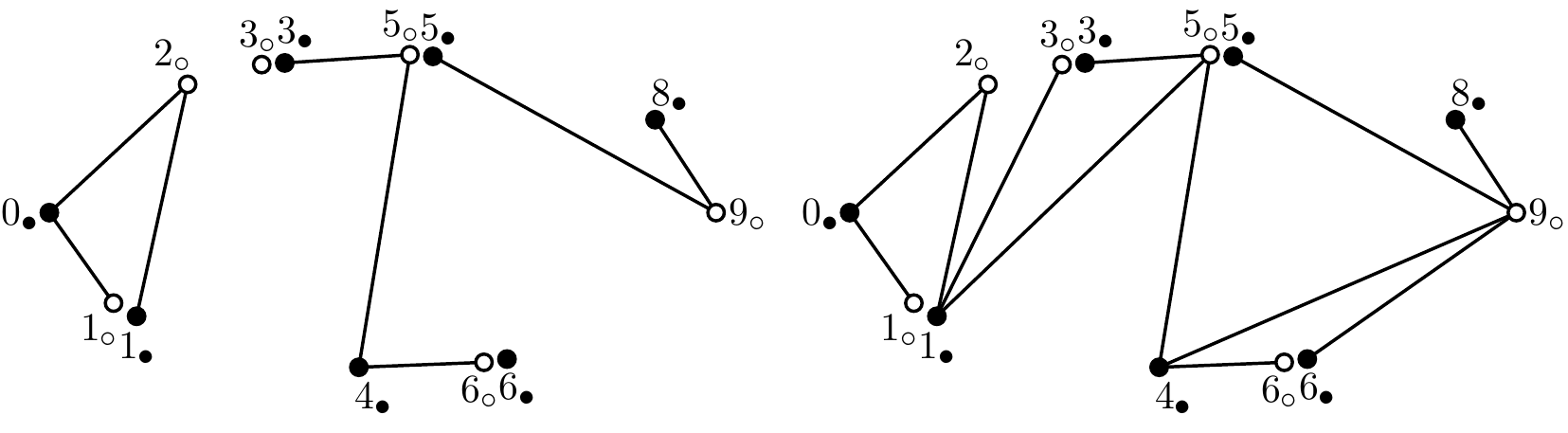}}
	\caption{A $(\signature, I_\bullet, J_\circ)$-forest (left) and a $(\signature, I_\bullet, J_\circ)$-tree (right) for~$\signature = {-}{+}{+}{-}{+}{-}{-}{+}$, ${I_\bullet = \llb 8_\bullet ] \ssm \{2_\bullet, 7_\bullet\}}$ and~$J_\circ = [ 8_\circ \rrb \ssm \{4_\circ, 7_\circ, 8_\circ\}$.}
	\label{fig:forestTree}
\end{figure}

\subsection{The $(\signature, I_\bullet, J_\circ)$-complex}

We call \defn{$(\signature, I_\bullet, J_\circ)$-complex}~$\complexIJ$ the clique complex of the graph of non-crossing edges of~$\GIJ$. In other words, its ground set is the edge set of~$\GIJ$, its faces are the $(\signature, I_\bullet, J_\circ)$-forests, and its facets are the $(\signature, I_\bullet, J_\circ)$-trees.

We say that an edge~$(i_\bullet, j_\circ)$ of~$\GIJ$ is \defn{irrelevant} if it is not crossed by any other edge of~$\GIJ$ (\ie, there is no~$i'_\bullet \in I_\bullet$ and~$j'_\circ \in J_\circ$ separated by~$(i_\bullet, j_\circ)$ and such that~$i'_\bullet < j'_\circ$). In particular, all edges of~$\GIJ$ on the boundary of~$\conv(I_\bullet \cup J_\circ)$ are irrelevant. Note that all $(\signature, I_\bullet, J_\circ)$-trees contain all irrelevant edges of~$\GIJ$, so that the $(\signature, I_\bullet, J_\circ)$-complex $\complexIJ$ is a pyramid over the irrelevant edges of~$\GIJ$.

Although the next statement will directly follow from Proposition~\ref{prop:CambrianTriangulations}, we state and prove it here to develop our understanding on the $(\signature, I_\bullet, J_\circ)$-complex. Recall that a simplicial complex is a \defn{pseudomanifold} when it is \defn{pure} (all its maximal faces have the same dimension) and \defn{thin} (any codimension~$1$ face is contained in at most two facets).

\begin{proposition}
\label{prop:pseudomanifold}
The $(\signature, I_\bullet, J_\circ)$-complex~$\complexIJ$ is a pseudomanifold.
\end{proposition}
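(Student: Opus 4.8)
The plan is to verify the two defining properties of a pseudomanifold directly from the combinatorics of $(\signature, I_\bullet, J_\circ)$-trees. Purity is already in hand: Proposition~\ref{prop:trees} says every facet of $\complexIJ$ is a spanning tree of $\GIJ$, hence has exactly $|I_\bullet| + |J_\circ| - 1$ edges, so all maximal faces share the same dimension. The real content is thinness, \ie\ that a $(\signature, I_\bullet, J_\circ)$-forest $\forest$ with $|I_\bullet| + |J_\circ| - 2$ edges that is contained in some $(\signature, I_\bullet, J_\circ)$-tree is in fact contained in \emph{at most two} of them.

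First I would observe that such an $\forest$, being a non-crossing spanning subgraph of $\GIJ$ with one edge fewer than a spanning tree, has exactly two connected components $C_1$ and $C_2$, each of which spans a ``polygonal'' subset of the vertices; a $(\signature, I_\bullet, J_\circ)$-tree containing $\forest$ is obtained by adding a single edge $(i_\bullet, j_\circ)$ of $\GIJ$ that joins $C_1$ to $C_2$ and crosses no edge of $\forest$. So I must show there are at most two such edges. The key geometric point is that the components $C_1$, $C_2$ are separated by the ``gaps'' left between them in $\conv(I_\bullet \cup J_\circ)$: since $\forest$ is non-crossing and spanning, the vertex sets of $C_1$ and $C_2$ interleave along the boundary of $\conv(I_\bullet \cup J_\circ)$ in at most two maximal arcs, so the region of the polygon not blocked by edges of $\forest$ where a connecting edge could live consists of at most two ``bigon-like'' cells. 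I would argue that each such cell admits at most one admissible connecting edge $(i_\bullet, j_\circ)$: within a cell the candidate black endpoints from $C_1$ (or $C_2$) are linearly ordered and likewise the white endpoints, and the non-crossing and colour constraints together with $i_\bullet < j_\circ$ pin down the edge uniquely (essentially the ``extremal'' black–white pair visible across the cell). This is the step I expect to be the main obstacle, because one must handle the colouring carefully: unlike the classical alternating case, the $\signature$-polygon places the two colours in a signature-dependent pattern around the boundary, so the bookkeeping of which pairs $(i_\bullet, j_\circ)$ are both non-crossing with $\forest$ and satisfy $i_\bullet < j_\circ$ is more delicate.

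An alternative, cleaner route that I would pursue in parallel is to set up an induction on $|I_\bullet| + |J_\circ|$ mirroring the proof of Proposition~\ref{prop:trees}: take $i_\bullet \eqdef \max(I_\bullet)$ (or the symmetric choice) and the associated $j_\circ$, so that $(i_\bullet, j_\circ)$ is an irrelevant boundary edge; since every $(\signature, I_\bullet, J_\circ)$-tree contains $(i_\bullet, j_\circ)$ and has $i_\bullet$ or $j_\circ$ as a leaf, one splits according to which of the two is a leaf and reduces to the complex over $I_\bullet \ssm \{i_\bullet\}$ or $J_\circ \ssm \{j_\circ\}$, to which the inductive hypothesis applies. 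Contracting an irrelevant edge does not change the link, so thinness of $\complexIJ$ at a codimension-$1$ face follows from thinness of the smaller complex, provided one checks that the codimension-$1$ face either already uses $(i_\bullet, j_\circ)$ (reduce) or not (but then, since $(i_\bullet, j_\circ)$ is irrelevant, it lies in every tree containing the face, so the face still meets the inductive setup). I would use whichever of these two arguments turns out to need the least case analysis; the inductive one seems likeliest to keep the $\signature$-dependence under control, since the choice of $(i_\bullet, j_\circ)$ there already packages the colour condition.

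Finally, I would note that the stated deferral is genuine: once Proposition~\ref{prop:CambrianTriangulations} identifies $\complexIJ$ with the restriction $\CambTriangIJ$ of a triangulation of the polytope $\subpoly$ to a face, pseudomanifoldness is automatic (a triangulation of a polytope is a pseudomanifold, and restricting to a face preserves this). So the self-contained argument above is only for expository completeness, and I would keep it short.
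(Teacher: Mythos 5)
Your proposal takes a different route from the paper's, and the route you pin your hopes on has a gap. The paper proves thinness by contradiction: assume a codimension-$1$ face $\forest$ lies in three facets $\forest\cup\{(i_\bullet,j_\circ)\}$, $\forest\cup\{(i'_\bullet,j'_\circ)\}$, $\forest\cup\{(i''_\bullet,j''_\circ)\}$; maximality of each facet forces the three extra edges to be \emph{pairwise crossing} (otherwise one facet would not be inclusion-maximal) and to lie in a single cell of $\conv(I_\bullet\cup J_\circ)\ssm\forest$; but three pairwise-crossing chords in one cell alternate colours around that cell, so one can exhibit a further edge, say $(i_\bullet,j'_\circ)$ or $(i_\bullet,j''_\circ)$, that is non-crossing with all of $\tree=\forest\cup\{(i_\bullet,j_\circ)\}$, contradicting maximality of $\tree$.

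The concrete flaw in your first route is the claim that ``each such cell admits at most one admissible connecting edge.'' This is false. Already for $n=2$, $\signature={+}{+}$, $I_\bullet=\llb 2_\bullet]$, $J_\circ=[2_\circ\rrb$, take $\forest$ to consist of the four boundary edges $(0_\bullet,1_\circ)$, $(1_\bullet,2_\circ)$, $(2_\bullet,3_\circ)$, $(0_\bullet,3_\circ)$. Then $\forest$ has the two components $\{0_\bullet,1_\circ,2_\bullet,3_\circ\}$ and $\{1_\bullet,2_\circ\}$, there is a single cell bordering both (the whole interior of the hexagon), and this one cell contains \emph{two} admissible connecting edges, namely $(0_\bullet,2_\circ)$ and $(1_\bullet,3_\circ)$. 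So your accounting (``at most two cells, each with at most one edge'') does not reflect what actually happens; the correct phenomenon is one cell with up to two connecting edges that necessarily cross each other, and the bound of two is exactly what the paper's contradiction argument extracts. In fact one can sharpen the first part of your picture: since $\forest$ is a non-crossing spanning forest with exactly two components, each component occupies a single cyclic arc of the boundary (if either were split across two arcs, a spanning edge of one component would have to cross a spanning edge of the other), so there is precisely one cell bordering both components — not ``at most two bigon-like cells.''

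Your inductive sketch (reducing via the extremal irrelevant boundary edge $(i_\bullet,j_\circ)$ as in Proposition~\ref{prop:trees}) is a plausible alternative, but as written it is underdeveloped: when $(i_\bullet,j_\circ)\in\forest$ you need to handle separately the facets in which the added edge is incident to $i_\bullet$ or to $j_\circ$, since then the vertex you were treating as a leaf of $\forest$ need not remain a leaf of the completed tree, and the clean reduction to $\complexIJ[I_\bullet\ssm\{i_\bullet\}][J_\circ]$ no longer applies directly. Finally, you are right that the statement follows for free once Proposition~\ref{prop:CambrianTriangulations} is available, and the paper says exactly this; the self-contained argument is included precisely to build intuition before that result is proved.
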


\begin{proof}
The $(\signature, I_\bullet, J_\circ)$-complex~$\complexIJ$ is pure of dimension~$|I_\bullet| + |J_\circ| - 1$ since all its maximal faces are spanning trees of~$\GIJ$.
To show that it is thin, assume by contradiction that a codimension~$1$ face~$\forest$ is contained in at least three facets~$\tree \eqdef \forest \cup \{(i_\bullet, j_\circ)\}$, $\tree' \eqdef \forest \cup \{(i'_\bullet, j'_\circ)\}$, and~$\tree'' \eqdef \forest \cup \{(i''_\bullet, j''_\circ)\}$. By maximality of~$\tree, \tree', \tree''$, the edges~$(i_\bullet, j_\circ)$, $(i'_\bullet, j'_\bullet)$ and~$(i''_\bullet, j''_\bullet)$ are pairwise crossing and all in the same cell of~$\conv(I_\bullet \cup J_\circ) \ssm \forest$. Therefore,~$i_\bullet, i'_\bullet, i''_\bullet$ are all smaller than~$j_\circ, j'_\circ, j''_\circ$ and we obtain that either~$(i_\bullet, j'_\circ)$ or~$(i_\bullet, j''_\circ)$ (or both) does not belong to~$\tree$ and does not cross any edge of~$\tree$, contradicting the maximality of~$\tree$.
\end{proof}

We say that two $(\signature, I_\bullet, J_\circ)$-trees~$\tree$ and~$\tree'$ are \defn{adjacent}, or related by a \defn{flip}, if they share all but one edge, \ie if there is~$(i_\bullet, j_\circ) \in \tree$ and~$(i'_\bullet, j'_\circ) \in \tree'$ such that~$\tree \ssm \{(i_\bullet, j_\circ)\} = \tree' \ssm \{(i'_\bullet, j'_\circ)\}$. See~\fref{fig:flip}. Note that not all edges of a $(\signature, I_\bullet, J_\circ)$-tree~$\tree$ are flippable: for instance, irrelevant edges of~$\GIJ$ (not crossed by other edges of~$\GIJ$) or leaves of~$\tree$ are never flippable. The following statement characterizes the flippable edges.

\begin{figure}[t]
	\capstart
	\centerline{\includegraphics[scale=.9]{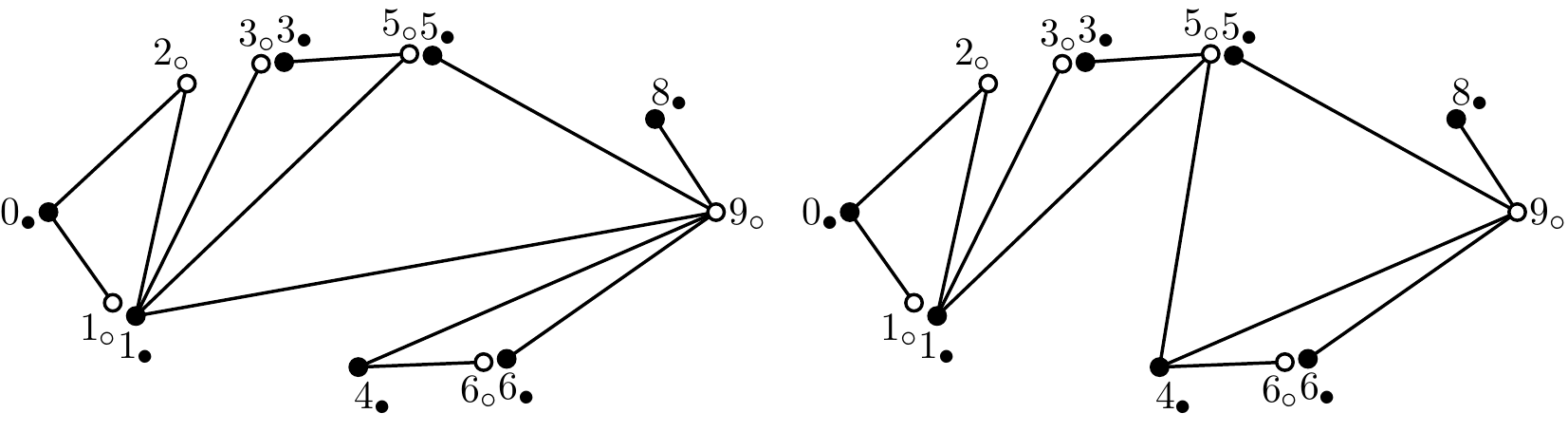}}
	\caption{Two $(\signature, I_\bullet, J_\circ)$-trees related by a flip. Here, ${\signature = {-}{+}{+}{-}{+}{-}{-}{+}}$, ${I_\bullet = \llb 8_\bullet ] \ssm \{2_\bullet, 7_\bullet\}}$ and~$J_\circ = [ 8_\circ \rrb \ssm \{4_\circ, 7_\circ, 8_\circ\}$.}
	\label{fig:flip}
\end{figure}

\begin{proposition}
\label{prop:flippable}
\begin{enumerate}
\item Consider two $(\signature, I_\bullet, J_\circ)$-trees~$\tree$ and~$\tree'$ with~${\tree \ssm \{(i_\bullet, j_\circ)\} = \tree' \ssm \{(i'_\bullet, j'_\circ)\}}$. Then the edges $(i_\bullet, j'_\circ)$ and~$(i'_\bullet, j_\circ)$ are contained in~$\tree$ and~$\tree'$.
\item An edge~$(i_\bullet, j_\circ)$ of a $(\signature, I_\bullet, J_\circ)$-tree~$\tree$ is flippable if and only if there exists~${i'_\bullet \in I_\bullet}$ and~${j'_\circ \in J_\circ}$ such that~$i'_\bullet < j'_\circ$ and both~$(i_\bullet, j'_\circ)$ and~$(i'_\bullet, j_\circ)$ belong to~$\tree$.
\end{enumerate}
\end{proposition}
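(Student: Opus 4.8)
The plan is to establish part~(1) first, since it drives both parts, using one elementary fact about convex polygons: when two chords $X,Y$ cross they are the diagonals of the convex quadrilateral on their four endpoints, and any chord crossing neither $X$ nor $Y$ is confined to one of the four regions cut out by $X\cup Y$, hence crosses none of the four sides of that quadrilateral. Two bookkeeping facts about $\polygonC$ will be read off its cyclic vertex order (in which the black vertices, and likewise the white vertices, occur in \emph{unimodal} order of labels): if $(i_\bullet,j_\circ)$ and $(i'_\bullet,j'_\circ)$ are crossing edges of $\GIJ$, then $(i_\bullet,j'_\circ)$ and $(i'_\bullet,j_\circ)$ are again edges of $\GIJ$; and conversely, if $(i_\bullet,j'_\circ),(i'_\bullet,j_\circ),(i'_\bullet,j'_\circ)$ are edges of $\GIJ$ with $i_\bullet\neq i'_\bullet$, $j_\circ\neq j'_\circ$ and with $(i_\bullet,j'_\circ),(i'_\bullet,j_\circ)$ non-crossing, then $(i_\bullet,j_\circ)$ and $(i'_\bullet,j'_\circ)$ cross. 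Both are short case checks on where the two ``outer'' endpoints sit relative to a chord.

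For part~(1), set $\forest:=\tree\ssm\{(i_\bullet,j_\circ)\}=\tree'\ssm\{(i'_\bullet,j'_\circ)\}$. Since $\tree$ is a maximal non-crossing subgraph and $\forest\cup\{(i'_\bullet,j'_\circ)\}=\tree'$ is non-crossing, the edge $(i'_\bullet,j'_\circ)\notin\tree$ must cross the unique edge of $\tree$ lying outside $\forest$, namely $(i_\bullet,j_\circ)$. Thus $(i_\bullet,j_\circ)$ and $(i'_\bullet,j'_\circ)$ are the diagonals of a quadrilateral, two of whose sides are $(i_\bullet,j'_\circ)$ and $(i'_\bullet,j_\circ)$, themselves edges of $\GIJ$. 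Now $(i_\bullet,j'_\circ)$ does not cross $(i_\bullet,j_\circ)$ (shared endpoint), and every $e\in\forest$ crosses neither $(i_\bullet,j_\circ)\in\tree$ nor $(i'_\bullet,j'_\circ)\in\tree'$, hence by the polygon fact crosses neither side $(i_\bullet,j'_\circ)$ nor side $(i'_\bullet,j_\circ)$. Therefore $\tree\cup\{(i_\bullet,j'_\circ)\}$ is non-crossing and, by maximality of $\tree$, $(i_\bullet,j'_\circ)\in\tree$; likewise $(i'_\bullet,j_\circ)\in\tree$, and the symmetric argument puts both edges in $\tree'$.

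The forward implication of part~(2) is then immediate: a flip partner $(i'_\bullet,j'_\circ)$ lies in $\GIJ$, so $i'<j'$, and part~(1) places $(i_\bullet,j'_\circ)$ and $(i'_\bullet,j_\circ)$ in $\tree$. For the converse, suppose $(i_\bullet,j'_\circ),(i'_\bullet,j_\circ)\in\tree$ and $i'<j'$, with $i_\bullet\neq i'_\bullet$ and $j_\circ\neq j'_\circ$ (the only case relevant to a genuine flip). Since $(i_\bullet,j'_\circ)$ and $(i'_\bullet,j_\circ)$ lie in the tree $\tree$ they do not cross, so the converse polygon fact makes $(i_\bullet,j_\circ)$ and $(i'_\bullet,j'_\circ)$ cross, i.e.\ the diagonals of a quadrilateral two of whose sides, $(i_\bullet,j'_\circ)$ and $(i'_\bullet,j_\circ)$, lie in $\tree$. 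Put $\forest:=\tree\ssm\{(i_\bullet,j_\circ)\}$ and $\tree':=\forest\cup\{(i'_\bullet,j'_\circ)\}$. The edges $(i_\bullet,j'_\circ),(i'_\bullet,j_\circ)\in\forest$ put $j'_\circ$ in the component of $i_\bullet$ and $i'_\bullet$ in the component of $j_\circ$ in the two-component forest $\forest$, so $(i'_\bullet,j'_\circ)$ reconnects it; hence $\tree'$ is connected with $|I_\bullet|+|J_\circ|-1$ edges, i.e.\ a spanning tree of $\GIJ$, and since spanning trees are exactly the maximal non-crossing subgraphs (Proposition~\ref{prop:trees}), it only remains to see that $\tree'$ is non-crossing. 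If some $e\in\forest$ crossed $(i'_\bullet,j'_\circ)$ then, since $e$ crosses none of $(i_\bullet,j_\circ),(i_\bullet,j'_\circ),(i'_\bullet,j_\circ)$, following $e$ through the four regions of the quadrilateral forces $e=(p_\bullet,j_\circ)$ with $p_\bullet$ strictly on the boundary arc from $i_\bullet$ to $i'_\bullet$, or $e=(i_\bullet,q_\circ)$ with $q_\circ$ strictly on the boundary arc from $j_\circ$ to $j'_\circ$. Both are excluded once $(i'_\bullet,j'_\circ)$ is chosen, among all witnessing pairs, so as to make the quadrilateral minimal: the presence of such a $p_\bullet$ (resp.\ $q_\circ$) lets one replace $i'_\bullet$ by $p_\bullet$ (resp.\ $j'_\circ$ by $q_\circ$) and obtain a witnessing pair with a strictly shorter arc, so after finitely many steps no such $e$ remains and $(i_\bullet,j_\circ)$ is flippable.

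I expect the main obstacle to be precisely this last non-crossing check: an arbitrary witnessing pair need not yield a flip, so one has to pass to a minimal one, and the delicate point is to verify that each minimization step remains inside $\GIJ$ --- that the shortened chord still joins a black vertex to a white vertex of strictly larger label --- which is exactly where the $\signature$-dependent cyclic order of $\polygonC$ is used. Part~(1) and the forward direction of part~(2), by contrast, go through quickly once the two polygon facts are in hand.
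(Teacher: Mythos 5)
Your argument is correct and follows essentially the same route as the paper: part~(1) by the one-line maximality observation, the forward direction of~(2) from~(1), and the converse of~(2) by passing to a minimal witnessing pair. The paper simply picks the canonical minimal witness upfront (the neighbor of $i_\bullet$ in $\tree$ consecutive to $j_\circ$, and the neighbor of $j_\circ$ consecutive to $i_\bullet$) and asserts that the flip then goes through; your arc-shrinking reduction reaches the same endpoint starting from an arbitrary witness.

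The step you flag as delicate is a genuine one, and the paper's ``WLOG consecutive'' tacitly relies on the same fact: when you trade $i'_\bullet$ for $p_\bullet$ you must check $p < j'$ so that the new pair still witnesses inside $\GIJ$. It closes quickly. From $i<j$, $i<j'$, $i'<j$, $i'<j'$ one gets $\max(i,i')<\min(j,j')$, so any vertical line $x=c$ with $\max(i,i')<c<\min(j,j')$ strictly separates $\{i_\bullet,i'_\bullet\}$ from $\{j_\circ,j'_\circ\}$ in $\polygonC$. That line meets the boundary of $\polygonC$ in exactly two points, both of which must lie on the long arc from $i'_\bullet$ to $i_\bullet$ through $j_\circ$ and $j'_\circ$; hence the short arc from $i_\bullet$ to $i'_\bullet$ lies entirely in the half-plane $x<c$, and any black vertex $p_\bullet$ on it has $p<c<j'$. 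The same separation is also what makes your ``converse polygon fact'' go: it rules out the third pairing $(i_\bullet,i'_\bullet)$ versus $(j_\circ,j'_\circ)$ crossing, which together with the non-crossing of $(i_\bullet,j'_\circ)$ and $(i'_\bullet,j_\circ)$ forces $(i_\bullet,j_\circ)$ and $(i'_\bullet,j'_\circ)$ to cross. With this supplied, your proof is complete and matches the paper's.
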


\begin{proof}
Point~(1) follows by maximality of~$\tree$ since any edge of~$\GIJ$ that crosses~$(i_\bullet, j'_\circ)$ or~$(i'_\bullet, j_\circ)$ also crosses $(i_\bullet, j_\circ)$ or~$(i'_\bullet, j'_\circ)$ (or both). This also shows one direction of Point~(2). For the other direction, we can observe that~$i'_\bullet$ and~$j'_\circ$ are separated by~$(i_\bullet, j_\circ)$ (since the edges~$(i_\bullet, j'_\circ)$ and~$(i'_\bullet, j_\circ)$ are non-crossing) and we assume that~$j_\circ$ and~$j'_\circ$ (resp.~$i_\bullet$ and~$i'_\bullet$) are two consecutive neighbors of~$i_\bullet$ (resp.~of~$j_\circ$) in~$\tree$. The edge~$(i_\bullet, j_\circ)$ can then be flipped to the edge~$(i'_\bullet, j'_\circ)$.
\end{proof}

For instance, the edge~$(4_\bullet, 5_\circ)$ of the $(\signature, I_\bullet, J_\circ)$-tree~$\tree$ of \fref{fig:forestTree}\,(right) can be flipped to~$(1_\bullet, 9_\circ)$ since~$(1_\bullet, 5_\circ)$ and~$(4_\bullet, 9_\circ)$ belong to~$\tree$, see \fref{fig:flip}. In contrast, the edges~$(5_\bullet, 9_\circ)$, $(1_\bullet, 3_\circ)$ and~$(1_\bullet, 5_\circ)$ of~$\tree$ are not flippable: the first is irrelevant, the second is a leaf, the last is neither irrelevant nor a leaf but still does not satisfy the condition of Proposition~\ref{prop:flippable}\,(2).

To conclude, we discuss the boundary of the $(\signature, I_\bullet, J_\circ)$-complex~$\complexIJ$.
The following lemma characterizes the boundary faces of the $(\signature, I_\bullet, J_\circ)$-complex.

\begin{lemma}
\label{lem:interiorFaces}
A $(\signature, I_\bullet, J_\circ)$-forest~$\forest$ lies on the boundary of the $(\signature, I_\bullet, J_\circ)$-complex~$\complexIJ$ if and only if there exists a $(\signature, I_\bullet, J_\circ)$-tree~$\tree$ with an unflippable edge~$\delta$ such that~$\forest \subseteq \tree \ssm \{\delta\}$.
In particular, all $(\signature, I_\bullet, J_\circ)$-forests with a missing irrelevant edge or an isolated node lie on the boundary of~$\complexIJ$.
\end{lemma}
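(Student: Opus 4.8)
The plan is to read off the boundary of $\complexIJ$ directly from its pseudomanifold structure (Proposition~\ref{prop:pseudomanifold}). Recall that the boundary of a pseudomanifold is the subcomplex generated by the codimension~$1$ faces lying in exactly one facet, so a $(\signature, I_\bullet, J_\circ)$-forest~$\forest$ lies on the boundary of~$\complexIJ$ if and only if it is contained in some such codimension~$1$ face. Since~$\complexIJ$ is pure, every codimension~$1$ face is of the form~$\tree \ssm \{\delta\}$ for a $(\signature, I_\bullet, J_\circ)$-tree~$\tree$ and an edge~$\delta \in \tree$. For such a face, thinness and purity together give that it lies in one or two facets, and it lies in two precisely when there is another tree~$\tree'$ with $\tree' \ssm \{\delta'\} = \tree \ssm \{\delta\}$ for some $\delta' \neq \delta$ --- which is exactly the condition that~$\delta$ be flippable. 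Hence $\tree \ssm \{\delta\}$ lies in exactly one facet if and only if~$\delta$ is unflippable, and the main equivalence of the lemma follows immediately.

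It then remains to derive the two explicit sufficient conditions, both of which amount to exhibiting, via the equivalence just proved, a suitable tree together with an unflippable edge. If~$\forest$ misses an irrelevant edge~$\epsilon$, I would extend~$\forest$ to a tree~$\tree$ (possible since~$\complexIJ$ is pure); as every $(\signature, I_\bullet, J_\circ)$-tree contains all irrelevant edges and an irrelevant edge is never flippable, $\epsilon$ is an unflippable edge of~$\tree$ with $\forest \subseteq \tree \ssm \{\epsilon\}$, so~$\forest$ lies on the boundary. If~$\forest$ has an isolated node~$v$, I would choose, among the nonempty family of trees containing~$\forest$, one --- say~$\tree$ --- that minimizes~$\deg_\tree(v)$. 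If every edge of~$\tree$ incident to~$v$ were flippable, then flipping one such edge~$\delta$ would produce a tree~$\tree'$ that still contains~$\forest$ (because $\delta \notin \forest$, the node~$v$ being isolated in~$\forest$) and satisfies $\deg_{\tree'}(v) < \deg_\tree(v)$ --- indeed, by Proposition~\ref{prop:flippable} the edge replacing~$\delta$ crosses~$\delta$, hence has no common endpoint with it and in particular is not incident to~$v$ --- contradicting minimality. Thus~$\tree$ has an unflippable edge~$\delta$ at~$v$, and since~$v$ is isolated in~$\forest$ we get $\delta \notin \forest$ and $\forest \subseteq \tree \ssm \{\delta\}$, so~$\forest$ lies on the boundary.

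The one point in this plan that is more than a formal manipulation is the claim that flipping an edge incident to~$v$ strictly decreases~$\deg(v)$, which uses the geometric fact --- recorded in Proposition~\ref{prop:flippable} --- that the two edges exchanged by a flip cross, and hence have disjoint endpoint sets. I therefore expect the isolated-node case to be the (mild) main obstacle; the missing-irrelevant-edge case and the main equivalence are essentially immediate from Proposition~\ref{prop:pseudomanifold} and the definitions.
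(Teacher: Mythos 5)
Your proof is correct and follows the same overall strategy as the paper: read off the boundary from the pseudomanifold structure, identify codimension-1 boundary faces with $\tree \ssm \{\delta\}$ for $\delta$ an unflippable edge of $\tree$, and then for the two special cases exhibit a tree with an unflippable edge not in $\forest$. The one place you go beyond the paper is the isolated-node case: the paper simply asserts that $\forest$ ``can be completed into a tree $\tree$ where $v$ is a leaf,'' whereas you justify the existence of an unflippable edge at $v$ via a clean minimality argument on $\deg_\tree(v)$ over trees containing $\forest$, using that the two edges exchanged by a flip cross and hence share no endpoint (a fact implicit in, though not literally stated by, Proposition~\ref{prop:flippable}). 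This is a genuine improvement in rigor over the paper's terse assertion, while remaining in the same spirit.
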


\begin{proof}
By definition, the codimension~$1$ faces on the boundary of~$\complexIJ$ are precisely the faces of the form~$\tree \ssm \{\delta)\}$ where~$\tree$ is a $(\signature, I_\bullet, J_\circ)$-tree and $\delta$ is an unflippable edge of~$\tree$. The first statement thus immediately follows. Finally, any $(\signature, I_\bullet, J_\circ)$-forest~$\forest$ with a missing relevant edge~$\delta$ (resp.~an isolated node~$v$) can be completed into a tree~$\tree$ where~$\delta$ is unflippable (resp.~where~$v$ is a leaf) and~$\forest \subseteq \tree \ssm \{\delta\}$ (resp.~$\forest \subseteq \tree \ssm \{v\}$).
\end{proof}

For instance, consider the $(\signature, I_\bullet, J_\bullet)$-forest~$\forest$ and the $(\signature, I_\bullet, J_\bullet)$-tree~$\tree$ of \fref{fig:forestTree}. The forest~$\forest$ lies on the boundary of~$\complexIJ$ as it can be complete into~$\tree \ssm \{(6_\bullet, 9_\circ)\}$ (the irrelevant edge~$(6_\circ, 9_\bullet)$ is missing), $\tree \ssm \{(1_\bullet, 3_\circ)\}$ (the vertex~$3_\circ$ is isolated) or~$\tree \ssm \{(1_\bullet, 5_\circ)\}$. 
The $(\signature, I_\bullet, J_\circ)$-forests which are not on the boundary of the $(\signature, I_\bullet, J_\circ)$-complex~$\complexIJ$ are called \defn{internal $(\signature, I_\bullet, J_\circ)$-forests}.

\subsection{$\signature$-trees versus triangulations of~$\polygonS$}

We now focus on the situation where~$I_\bullet = \llb n_\bullet ]$ and~$J_\circ = [ n_\circ \rrb$. We write~$\G$ for~$\GIJ[{\llb n_\bullet ]}][{[ n_\circ \rrb}]$ and we just call \defn{$\signature$-trees} (resp.~forests, resp.~complex) the $(\signature, \llb n_\bullet ], [ n_\circ \rrb)$-trees (resp.~forests, resp.~complex). The following immediate bijection between triangulations of~$\polygonS$ and $\signature$-trees is illustrated in \fref{fig:bijectionTreesTriangulations}.

\begin{figure}[b]
	\capstart
	\centerline{\includegraphics[scale=.9]{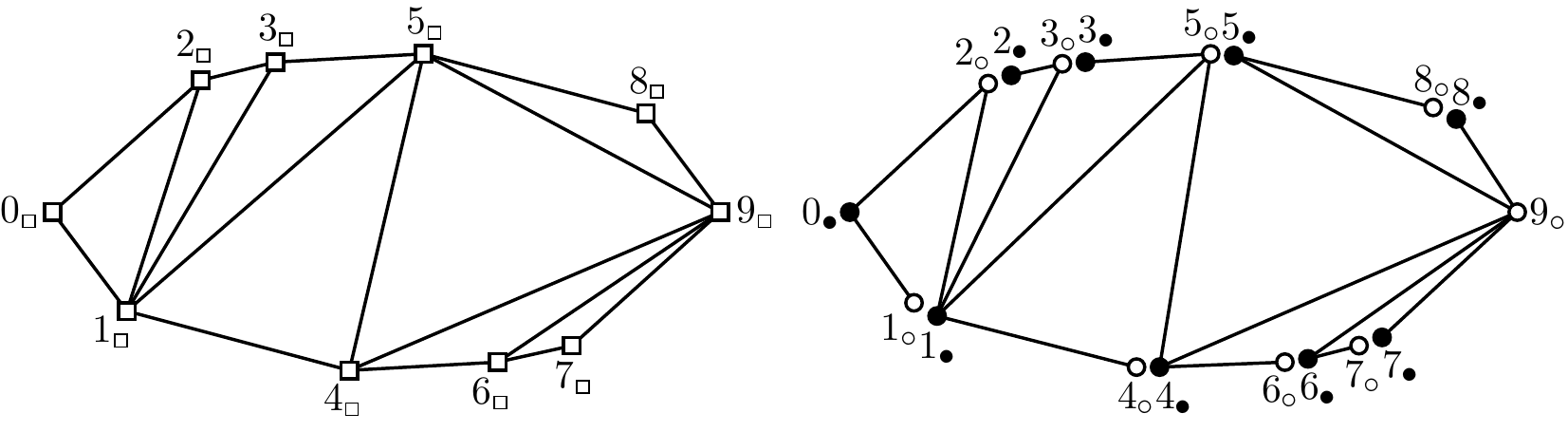}}
	\caption{A triangulation~$T$ of~$\polygonS$ (left) and the corresponding $\signature$-tree $\phi(T)$ (right).}
	\label{fig:bijectionTreesTriangulations}
\end{figure}

\begin{proposition}
\label{prop:bijectionTreesTriangulations}
The map~$\phi$ defined by~$\phi \big( (i_\sq, j_\sq) \big) = (i_\bullet, j_\circ)$ (for~$i_\bullet < j_\circ$) is a bijection between the diagonals of~$\polygonS$ and the edges of~$\G$ and induces a bijection between the dissections (resp.~triangulations) of~$\polygonS$ and the $\signature$-forests (resp.~$\signature$-trees). In particular, the $\signature$-complex is a simplicial associahedron.
\end{proposition}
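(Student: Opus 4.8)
The plan is to verify that the map $\phi$ is a well-defined bijection on diagonals, that it respects the crossing relation, and then to bootstrap from this to the statement about dissections, triangulations, and the simplicial associahedron. First I would set up the correspondence on vertices: a diagonal $(i_\sq, j_\sq)$ of $\polygonS$ with $i < j$ connects two square vertices, and after the blow-up producing $\polygonC$, the square vertex $i_\sq$ on the left endpoint becomes the black vertex $i_\bullet$ and the square vertex $j_\sq$ on the right endpoint becomes the white vertex $j_\circ$. The condition $i_\bullet < j_\circ$ is automatic from $i < j$ in the labelling of $\polygonC$, so $\phi$ lands in the edge set of $\G$. Conversely, every edge of $\G$ is of the form $(i_\bullet, j_\circ)$ with $0 \le i < j \le n+1$ and the only black--white pairs not of this form are already excluded, so $\phi$ is a bijection between the $\binom{n+2}{2} - (n+1)$ diagonals of $\polygonS$ and the edges of $\G$.

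The key step is to check that two diagonals of $\polygonS$ cross in their interior if and only if their images under $\phi$ cross in their interior in $\polygonC$. This is a routine but essential geometric verification: diagonals $(i_\sq, j_\sq)$ and $(k_\sq, l_\sq)$ with $i < j$ and $k < l$ cross exactly when $i < k < j < l$ or $k < i < l < j$, and one checks that the same interleaving condition on $\{i_\bullet, j_\circ\}$ and $\{k_\bullet, l_\circ\}$ along the boundary of $\polygonC$ is exactly the crossing condition there — the blow-up of each square vertex into an adjacent black--white pair does not change the cyclic order of the relevant endpoints, and a black endpoint never coincides with a white endpoint, so no spurious incidences are created. I expect this to be the main obstacle, in the sense of being the only point requiring genuine care: one must make sure that when a diagonal of $\polygonS$ shares an endpoint with another (so they do not cross), the corresponding edges of $\G$ also share a vertex or are otherwise non-crossing — this uses that $\phi$ sends the left endpoint to a black vertex and the right endpoint to a white vertex consistently.

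Granting the crossing compatibility, the rest is formal. A dissection of $\polygonS$ is a set of pairwise non-crossing diagonals, which under $\phi$ becomes a set of pairwise non-crossing edges of $\G$, i.e.\ a $\signature$-forest, and this correspondence is inclusion-preserving in both directions; maximal dissections, namely triangulations, correspond to maximal $\signature$-forests, namely $\signature$-trees (using Proposition~\ref{prop:trees} to know these are exactly the maximal non-crossing subgraphs). Finally, the $\signature$-complex is by definition the clique complex of the non-crossing graph on edges of $\G$, which via $\phi$ is isomorphic to the clique complex of the non-crossing graph on diagonals of $\polygonS$; the latter is precisely the simplicial complex of dissections of $\polygonS$, i.e.\ the (polar dual of the) associahedron, so the $\signature$-complex is a simplicial associahedron.
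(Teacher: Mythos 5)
Your overall plan follows the paper's: show that~$\phi$ is a bijection preserving crossings, then deduce the dissection/forest and triangulation/tree correspondences (the paper pins down the latter by an edge count, $2n+1$ on each side, while you use maximality; both are fine). However, the crossing criterion you assert for~$\polygonS$ is wrong for general signatures, and this is precisely the step you flagged as the one requiring care. The condition ``$i < k < j < l$ or $k < i < l < j$'' characterizes crossing of chords when the vertices are labeled in cyclic boundary order; but the vertices of~$\polygonS$ are labeled left-to-right, which agrees with the cyclic order only when~$\signature$ is constant. Concretely, for~$\signature = {+}{-}$ the diagonals~$(0_\sq, 3_\sq)$ and~$(1_\sq, 2_\sq)$ are nested in the linear order ($0<1<2<3$) yet they do cross, because~$1_\sq$ lies above and~$2_\sq$ lies below the chord~$(0_\sq, 3_\sq)$.

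What your argument actually needs, and what the second half of your paragraph gestures at, is the cyclic-order version: two chords of a convex polygon cross iff their endpoints interleave in the cyclic boundary order, and one must verify that~$\phi$ sends cyclic interleaving in~$\polygonS$ to cyclic interleaving in~$\polygonC$. Since each~$i_\sq$ with $i\in[n]$ is replaced by the adjacent pair~$\{i_\circ, i_\bullet\}$, this reduces to checking that the alternating-coloring constraint on~$\polygonC$ forces~$i_\circ$ to sit on the side of that pair facing the smaller indices and~$i_\bullet$ on the side facing the larger indices; with that orientation, two diagonals of~$\polygonS$ sharing~$i_\sq$ have images~$(\cdot, i_\circ)$ and~$(i_\bullet, \cdot)$ which emanate from the two sides of the split and remain non-crossing. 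That orientation check is the real content hidden behind the ``clearly'' in the paper's proof, and your write-up does not carry it out. A minor additional slip: the number of diagonals of~$\polygonS$ counted by~$\phi$ (boundary edges included, as the paper's convention requires to match the~$\binom{n+2}{2}$ edges of~$\G$) is~$\binom{n+2}{2}$, not~$\binom{n+2}{2}-(n+1)$.
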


\begin{proof}
The map $\phi$ is clearly bijective and sends crossing (resp.~non-crossing) diagonals of~$\polygonS$ to crossing (resp.~non-crossing) edges of~$\G$. Therefore, it sends dissections of~$\polygonS$ to $\signature$-forests. Finally, it sends triangulations of $\polygonS$ to $\signature$-trees since a triangulation of~$\polygonS$ has $2n+1$ diagonals (including the boundary edges of~$\polygonS$) and a $\signature$-tree has $2n+1$ edges.
\end{proof}

\begin{corollary}
\label{coro:catalan}
For any signature~$\signature \in \{\pm\}^n$, there are $\cat(n) \eqdef \frac{1}{n+1} \binom{2n}{n}$ many $\signature$-trees.
\end{corollary}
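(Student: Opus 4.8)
The plan is to reduce the enumeration to the classical count of triangulations of a convex polygon via the bijection already established in Proposition~\ref{prop:bijectionTreesTriangulations}. That proposition gives a bijection between the $\signature$-trees and the triangulations of the $\signature$-polygon~$\polygonS$, so it suffices to count the latter. The key observation is that~$\polygonS$ is, for \emph{every} signature~$\signature \in \{\pm\}^n$, a convex $(n+2)$-gon: the signature only prescribes on which side of the chord~$(0_\sq, (n+1)_\sq)$ each intermediate vertex~$1_\sq, \dots, n_\sq$ lies, but the resulting polygon is convex regardless of~$\signature$, and its set of triangulations (as an abstract simplicial complex, hence its cardinality) does not depend on~$\signature$.

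From there I would simply invoke the classical fact that a convex $m$-gon has $\cat(m-2) = \frac{1}{m-1}\binom{2m-4}{m-2}$ triangulations; specializing to~$m = n+2$ gives $\cat(n) = \frac{1}{n+1}\binom{2n}{n}$, which combined with the bijection of Proposition~\ref{prop:bijectionTreesTriangulations} yields the corollary. If a self-contained argument is preferred, one can instead argue by induction on~$n$: in any triangulation of~$\polygonS$, the triangle incident to the edge~$(0_\sq, (n+1)_\sq)$ has some apex~$k_\sq$, and deleting it decomposes the triangulation into a triangulation of the sub-polygon on~$\{0_\sq, \dots, k_\sq\}$ and one on~$\{k_\sq, \dots, (n+1)_\sq\}$; this is a bijection, yielding the Segner recurrence $\cat(n) = \sum_{k=1}^{n} \cat(k-1)\,\cat(n-k)$ together with $\cat(0) = 1$, which characterizes the Catalan numbers.

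I do not expect any genuine obstacle here: the entire combinatorial content is carried by Proposition~\ref{prop:bijectionTreesTriangulations}, and the only point worth emphasizing in the write-up is that, although the $\signature$-tree structure (and later the associated lattices) depends heavily on~$\signature$, the mere \emph{number} of $\signature$-trees is signature-independent because~$\polygonS$ is always combinatorially a convex $(n+2)$-gon.
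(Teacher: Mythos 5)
Your proof is correct and follows exactly the route the paper intends: the corollary is stated without a separate proof precisely because it is an immediate consequence of the bijection in Proposition~\ref{prop:bijectionTreesTriangulations} together with the classical Catalan count of triangulations of a convex $(n+2)$-gon. The optional self-contained Segner-recurrence argument you sketch is fine but unnecessary given that the paper takes the Catalan count of polygon triangulations for granted.
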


\subsection{Non-crossing matchings}

We conclude this section with another family of non-crossing subgraphs of~$\GIJ$ that will be needed later in the proof of Proposition~\ref{prop:CambrianTriangulations}.
A \defn{perfect matching} of~$\GIJ$ is a subset~$M$ of edges of~$\GIJ$ such that each vertex of~$\GIJ$ is contained in precisely one edge of~$M$. The following statement is immediate.

\begin{lemma}
The bipartite graph~$\GIJ$ admits a perfect matching if and only if~$|I_\bullet| = |J_\circ|$ and $| I_\bullet \cap \llb k_\bullet ] | \ge | J_\circ \cap [ k_\circ \rrb |$ for all~$k \in [n]$. 
\end{lemma}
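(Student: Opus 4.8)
The statement is a Hall-type condition, so the natural approach is to prove the two directions separately, with the forward direction being a straightforward counting/interval argument and the backward direction an explicit construction of a non-crossing perfect matching. First I would dispose of the necessity. If $M$ is a perfect matching of $\GIJ$, then pairing up vertices forces $|I_\bullet| = |J_\circ|$. For the inequality, fix $k \in [n]$ and consider the vertices of $I_\bullet \cap \llb k_\bullet]$; each is matched to some vertex $j_\circ \in J_\circ$ with $j_\circ > i_\bullet$, but crucially — because of the convention $\min(I_\bullet) < \min(J_\circ)$ and the geometry of $\polygonC$ where $i_\bullet$ sits just left of $i_\circ$ — a black vertex $i_\bullet$ with $i \le k$ can only be matched to a white vertex $j_\circ$ with $j \le k$ or... actually here one must be slightly careful about whether $i_\bullet < j_\circ$ with $i = j$ is allowed. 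Re-reading the edge set $\set{(i_\bullet,j_\circ)}{i_\bullet<j_\circ}$ and the ordering of $\polygonC$ (black $i_\bullet$ precedes white $i_\circ$), the edge $(i_\bullet, i_\circ)$ is present, so a vertex of $I_\bullet \cap \llb k_\bullet]$ gets matched into $J_\circ \cap [k_\circ\rrb$. Hence $|I_\bullet \cap \llb k_\bullet]| \le |J_\circ \cap [k_\circ \rrb|$; but this is the reverse of the claimed inequality, so I would double-check the indexing conventions — the asymmetry between $\llb k_\bullet]$ (which includes $0_\bullet$) and $[k_\circ\rrb$ (which includes $(k{+}1)_\circ$) is precisely what flips it, since $|[k_\circ\rrb| = k+1$ counts one extra white vertex. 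The clean way is to count the \emph{complements}: a white vertex $j_\circ$ with $j > k$ can only be matched to a black vertex $i_\bullet$ with $i \ge $ something, giving $|J_\circ \setminus [k_\circ\rrb| \le |I_\bullet \setminus \llb k_\bullet]|$, which combined with $|I_\bullet| = |J_\circ|$ yields exactly $|I_\bullet \cap \llb k_\bullet]| \ge |J_\circ \cap [k_\circ\rrb|$.

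**The construction (sufficiency).** For the converse I would build the matching greedily and show it is automatically non-crossing, or — cleaner — just exhibit the matching and verify the edge condition $i_\bullet < j_\circ$ holds throughout. Write $I_\bullet = \{i^{(1)}_\bullet < \dots < i^{(m)}_\bullet\}$ and $J_\circ = \{j^{(1)}_\circ < \dots < j^{(m)}_\circ\}$ where $m = |I_\bullet| = |J_\circ|$, and set $M \eqdef \set{(i^{(\ell)}_\bullet, j^{(\ell)}_\circ)}{\ell \in [m]}$, i.e. match in sorted order. This is visibly non-crossing (it is the "parallel" matching). The only thing to check is that each such edge is legal, namely $i^{(\ell)}_\bullet < j^{(\ell)}_\circ$ as vertices of $\polygonC$. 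Suppose not: then $j^{(\ell)}_\circ \le i^{(\ell)}_\bullet$, so $j^{(\ell)} \le i^{(\ell)}$ (using that $j_\circ \le i_\bullet$ in $\polygonC$ forces $j \le i$). Setting $k \eqdef i^{(\ell)}$, the $\ell$ whites $j^{(1)}_\circ, \dots, j^{(\ell)}_\circ$ all satisfy $j \le k$, so $|J_\circ \cap [k_\circ\rrb| \ge \ell$, while only $i^{(1)}_\bullet,\dots,i^{(\ell)}_\bullet$ among $I_\bullet$ can have index $\le k$ and in fact $i^{(\ell)} = k$ is the largest such, giving $|I_\bullet \cap \llb k_\bullet]| = \ell$ at most — wait, I need strictly fewer to get a contradiction with $\ge$, so the precise bookkeeping is: $|I_\bullet \cap \llb k_\bullet]| \le \ell$ and if equality holds I still need to rule it out, which is where the $j^{(\ell)}_\circ \le i^{(\ell)}_\bullet$ (as opposed to $=$) and the distinctness of black/white positions gives the strict inequality $|J_\circ \cap [k_\circ\rrb| > |I_\bullet \cap \llb k_\bullet]|$, contradicting the hypothesis. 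I would spell this index chase out carefully since it is the crux.

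**Main obstacle.** The statement is called "immediate," and the substance really is just Hall's theorem dressed up, so there is no deep obstacle — the genuine subtlety is entirely notational: keeping straight the interleaving of black vertex $i_\bullet$ and white vertex $i_\circ$ along $\polygonC$, the position of $0_\bullet$ and $(n{+}1)_\circ$ at the ends, and the resulting off-by-one that makes the inequality go the way it does rather than the reverse. I expect the cleanest writeup routes entirely through the complement formulation to make the direction of the inequality manifest, together with a single observation: for $i_\bullet, j_\circ$ vertices of $\polygonC$, one has $i_\bullet < j_\circ$ if and only if $i \le j$... no — if and only if $i < j$ or ($i = j$), i.e. $i \le j$; whereas $j_\circ < i_\bullet$ iff $j < i$. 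That dictionary between the linear order on $\polygonC$ and the numerical order on indices is the only tool needed, and getting it exactly right is the one place to be careful.
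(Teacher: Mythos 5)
Your proposal has the right overall shape (parallel matching for sufficiency, a counting argument for necessity), but the bookkeeping you yourself flag as ``the crux'' contains real errors. To start, the ordering of $\polygonC$ is stated backwards: the vertices appear in the order $0_\bullet, 1_\circ, 1_\bullet, 2_\circ, 2_\bullet, \dots, n_\circ, n_\bullet, (n+1)_\circ$, so the white vertex $i_\circ$ \emph{precedes} the black vertex $i_\bullet$, the edge $(i_\bullet, i_\circ)$ is not present, and $i_\bullet < j_\circ$ holds if and only if $i < j$. For necessity, the ``complement'' route you land on also fails: the claim $|J_\circ \ssm [ k_\circ \rrb\,| \le |I_\bullet \ssm \llb k_\bullet ]\,|$ does not follow from the matching, since a white vertex $j_\circ$ with $j > k+1$ may well be matched to some $i_\bullet$ with $i \le k$. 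What works is the \emph{dual} of your first attempt, not its complement: every $j_\circ \in J_\circ \cap [ k_\circ \rrb$ (so $j \le k+1$) is matched to some $i_\bullet$ with $i < j \le k+1$, hence $i \le k$, hence $i_\bullet \in I_\bullet \cap \llb k_\bullet ]$; injectivity of the matching then gives $|J_\circ \cap [ k_\circ \rrb\,| \le |I_\bullet \cap \llb k_\bullet ]\,|$ directly.

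For sufficiency, the choice $k \eqdef i^{(\ell)}$ does not close the argument: under the assumption $j^{(\ell)} \le i^{(\ell)} = k$ one only obtains $|I_\bullet \cap \llb k_\bullet ]\,| = \ell$ and $|J_\circ \cap [ k_\circ \rrb\,| \ge \ell$, which is perfectly consistent with the hypothesis, and ``distinctness of black and white positions'' does not supply the missing strict inequality --- for instance $J_\circ$ may contain no index between $j^{(\ell)}+1$ and $k+1$. The fix is to take $k \eqdef j^{(\ell)} - 1$ instead: if $j^{(\ell)} \ge 2$ then $k \in [n]$, and $j^{(1)}, \dots, j^{(\ell)}$ all have index at most $k+1$ so $|J_\circ \cap [ k_\circ \rrb\,| \ge \ell$, while $i^{(\ell)} \ge j^{(\ell)} = k+1 > k$ so $|I_\bullet \cap \llb k_\bullet ]\,| \le \ell - 1$, a genuine contradiction; the corner case $j^{(\ell)} = 1$ forces $\ell = 1$, and then $i^{(1)} \ge 1$ contradicts the standing convention $\min(I_\bullet) < \min(J_\circ)$. (The paper declares the lemma immediate and gives no proof, so there is no published argument to compare against.)
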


A matching is \defn{non-crossing} if any two of its edges are non-crossing. See \fref{fig:noncrossingMatching}.

\begin{lemma}
\label{lem:noncrossingmatchings}
If~$\GIJ$ admits a perfect matching, then it has a unique non-crossing perfect matching.
\end{lemma}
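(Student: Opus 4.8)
The plan is to prove existence and uniqueness separately, both by induction on $|I_\bullet| = |J_\circ|$, using the same boundary-edge reduction that drives the proof of Proposition~\ref{prop:trees}. For the \emph{existence} of a non-crossing perfect matching, suppose $\GIJ$ admits a perfect matching, so $|I_\bullet| = |J_\circ|$ and the Hall-type inequality $|I_\bullet \cap \llb k_\bullet ]| \ge |J_\circ \cap [ k_\circ \rrb|$ holds for all $k \in [n]$. Let $j_\circ \eqdef \min(J_\circ)$ and $i_\bullet \eqdef \max\set{i_\bullet \in I_\bullet}{i_\bullet < j_\circ}$; this $i_\bullet$ exists because the assumption $\min(I_\bullet) < \min(J_\circ)$ guarantees at least one black vertex left of $j_\circ$. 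First I would check that $(i_\bullet, j_\circ)$ is a boundary edge of $\conv(I_\bullet \cup J_\circ)$ and, more importantly, that no other edge of $\GIJ$ crosses it: an edge $(i'_\bullet, j'_\circ)$ crossing $(i_\bullet, j_\circ)$ would need $i'_\bullet < i_\bullet$ and $i_\bullet < j'_\circ < j_\circ$ with $j'_\circ \in J_\circ$, contradicting the minimality of $j_\circ$ — so in fact $(i_\bullet, j_\circ)$ is irrelevant. Then I would verify that $\GIJ[I_\bullet \ssm \{i_\bullet\}][J_\circ \ssm \{j_\circ\}]$ still satisfies the two hypotheses of the previous lemma (the cardinalities drop by one each, and the inequalities survive because we remove the globally leftmost white vertex together with the black vertex immediately to its left), so by induction it has a non-crossing perfect matching $M'$; adjoining the irrelevant edge $(i_\bullet, j_\circ)$ yields a non-crossing perfect matching $M \eqdef M' \cup \{(i_\bullet, j_\circ)\}$ of $\GIJ$.

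For \emph{uniqueness}, I would show that any non-crossing perfect matching $M$ of $\GIJ$ must contain this same edge $(i_\bullet, j_\circ)$. Indeed $j_\circ = \min(J_\circ)$ is matched in $M$ to some $i^*_\bullet \in I_\bullet$ with $i^*_\bullet < j_\circ$. If $i^*_\bullet \ne i_\bullet$, then $i^*_\bullet < i_\bullet < j_\circ$, and the black vertex $i_\bullet$ is matched in $M$ to some white vertex $j^*_\circ$ with $i_\bullet < j^*_\circ$; since $j_\circ$ is the smallest white vertex we have $j_\circ < j^*_\circ$ (here $j_\circ \ne j^*_\circ$ because $j_\circ$ is already matched to $i^*_\bullet$), so the edges $(i^*_\bullet, j_\circ)$ and $(i_\bullet, j^*_\circ)$ cross, contradicting that $M$ is non-crossing. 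Hence $(i_\bullet, j_\circ) \in M$, and $M \ssm \{(i_\bullet, j_\circ)\}$ is a non-crossing perfect matching of $\GIJ[I_\bullet \ssm \{i_\bullet\}][J_\circ \ssm \{j_\circ\}]$, which by induction is unique; so $M$ is unique. The base case $|I_\bullet| = |J_\circ| = 1$ is trivial, the single edge being forced.

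The step I expect to require the most care is the bookkeeping showing that passing from $(I_\bullet, J_\circ)$ to $(I_\bullet \ssm \{i_\bullet\}, J_\circ \ssm \{j_\circ\})$ preserves both the cardinality condition and the standing hypotheses $\min(I_\bullet) < \min(J_\circ)$ and $\max(I_\bullet) < \max(J_\circ)$, so that the inductive hypothesis genuinely applies; in particular one must confirm that the Hall-type inequalities remain valid after deleting a white vertex of minimal index together with the black vertex just left of it — a short interval-counting argument, but one that must be written out honestly since $i_\bullet$ need not be the globally leftmost black vertex and the deleted indices interact with the thresholds $k$ in a slightly asymmetric way. Everything else is the by-now-familiar observation that a suitably chosen boundary edge of $\conv(I_\bullet \cup J_\circ)$ is uncrossable and hence forced into any maximal (here, perfect) non-crossing configuration.
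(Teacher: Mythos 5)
Your approach fails at the crossing analysis in the existence half. The claim that an edge $(i'_\bullet, j'_\circ)$ crossing $(i_\bullet, j_\circ)$ must satisfy $i_\bullet < j'_\circ < j_\circ$ is false for general signatures: edges of $\GIJ$ are drawn as straight chords in the convex polygon $\polygonC$, whose cyclic vertex order depends on $\signature$, so crossings are \emph{not} governed by numerical interleaving of the indices. (They are only in the extreme cases $\signature = {-}^n$ or $\signature = {+}^n$, where all intermediate vertices of $\polygonC$ lie on the same side of the segment from $0_\bullet$ to $(n+1)_\circ$.) Consequently the edge you select need not be irrelevant, and your recursion can in fact output a crossing matching.

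A concrete counterexample: take $n = 3$, $\signature = {-}{+}{-}$, $I_\bullet = \{0_\bullet, 2_\bullet\}$, $J_\circ = \{3_\circ, 4_\circ\}$, which satisfies the standing hypotheses and the Hall condition of the preceding lemma. Your rule picks $j_\circ = 3_\circ$ and $i_\bullet = 2_\bullet$, hence the edge $(2_\bullet, 3_\circ)$. But $2_\bullet$ lies above the base segment of $\polygonC$ (since $\signature_2 = {+}$) while $3_\circ$ lies below it (since $\signature_3 = {-}$), so the cyclic order of the four vertices around $\polygonC$ is $0_\bullet, 3_\circ, 4_\circ, 2_\bullet$. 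Hence $(2_\bullet, 3_\circ)$ and $(0_\bullet, 4_\circ)$ are the two diagonals of this quadrilateral and cross. The unique non-crossing perfect matching here is $\{(0_\bullet, 3_\circ), (2_\bullet, 4_\circ)\}$, whereas your induction constructs $\{(2_\bullet, 3_\circ), (0_\bullet, 4_\circ)\}$, which is the crossing one.

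The place you flagged as delicate (the Hall-condition bookkeeping) is not where the difficulty lives; what is missing is that the choice of forced edge must depend on the signature. The paper sidesteps any explicit boundary-edge choice and instead runs a single left-to-right sweep with a double-ended pile: each black vertex $k$ is pushed on the top or the bottom of the pile according to $\signature_k$, and each white vertex $k$ pops from the top or bottom according to $\signature_k$. This $\signature$-dependence is exactly what encodes the cyclic geometry of $\polygonC$, and it yields existence and uniqueness in one stroke. If you wish to keep a deletion-style induction, the forced edge at $j_\circ = \min(J_\circ)$ must be its genuine hull-neighbor in $\conv(I_\bullet \cup J_\circ)$ --- in the counterexample that neighbor is $0_\bullet$, not $2_\bullet$ --- and locating it requires comparing signs, much as in the choice of $j_\circ$ in the paper's proof of Proposition~\ref{prop:trees}.
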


\begin{proof}
We give an algorithm to construct the unique non-crossing perfect matching of~$\GIJ$. We consider a vertical pile~$P$ initially empty. We then read the vertices of~$I_\bullet \cup J_\circ$ from left to right. At each step, we read a new vertex~$k$ and proceed as follows:
\begin{itemize}
\item If~$k \in I_\bullet$, we insert~$k$ on top of~$P$ if $\signature_k = {+}$ and at the bottom of~$P$ if~$\signature_k = {-}$.
\item If~$k \in J_\circ$, then we pop the element~$\ell$ on top of~$P$ if $\signature_k = {+}$ and at the bottom of~$P$ if~$\signature_k = {-}$, and connect~$k$ to~$\ell$.
\end{itemize}
This algorithm clearly terminates and returns a non-crossing matching as soon as the pile~$P$ is never empty when an element of~$J_\circ$ is found. This is ensured by the condition~${| I_\bullet \cap \llb k_\bullet ] | \ge | J_\circ \cap [ k_\circ \rrb |}$ for all~$k \in [n]$.
To see that it constructs the unique non-crossing matching, observe that when a vertex~$k \in J_\circ$ is found, we have no other choice than connecting it immediately to the last available vertex on top of~$P$ if $\signature_k = {+}$ and at the bottom of~$P$ if~$\signature_k = {-}$. Indeed, any other choice would separate some vertices of~$P$ to the remaining vertices of~$J_\circ$, and thus ultimately lead to a matching with crossings.
\end{proof}

\begin{remark}
Note that Lemma~\ref{lem:noncrossingmatchings} provides another proof that non-crossing subgraphs of~$\GIJ$ are acyclic. Indeed, since~$\GIJ$ is bipartite, any non-crossing cycle could be decomposed into two distinct non-crossing matchings, contradicting Lemma~\ref{lem:noncrossingmatchings}.
\end{remark}

\begin{figure}[b]
	\capstart
	\centerline{\includegraphics[scale=.9]{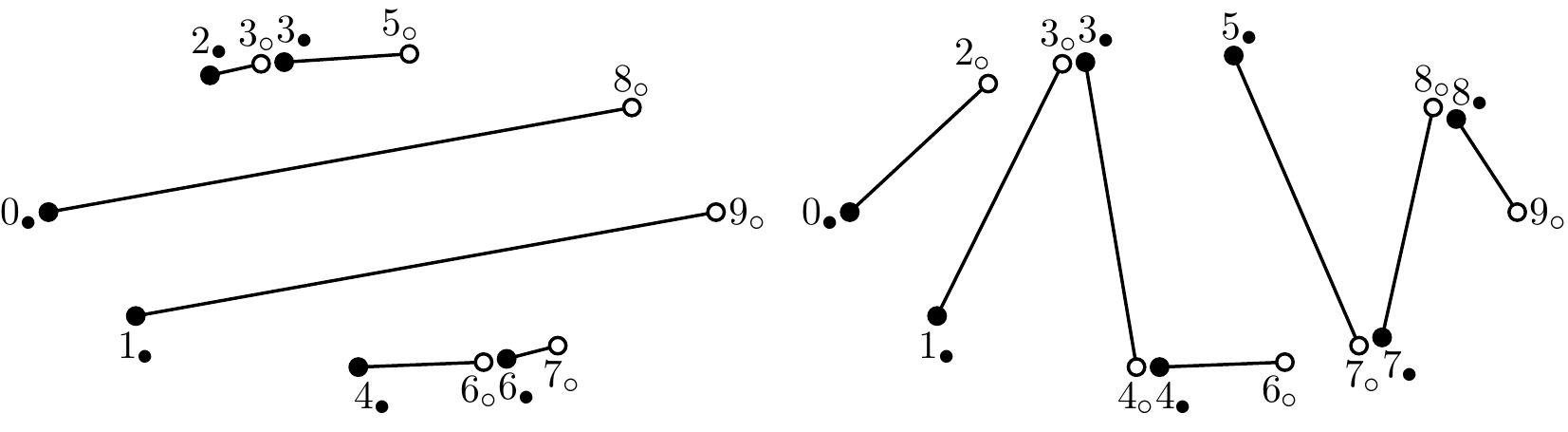}}
	\caption{The unique non-crossing matching of~$\GIJ$ for two distinct instances of~$I_\bullet$ and~$J_\circ$.}
	\label{fig:noncrossingMatching}
\end{figure}

\section{The $(\signature, I_\bullet, J_\circ)$-lattice}

In this section, we orient flips between $(\signature, I_\bullet, J_\circ)$-trees as follows.

\begin{lemma}
\label{lem:characterizationIncreasingFlip}
Consider two adjacent $(\signature, I_\bullet, J_\circ)$-trees~$\tree$ and~$\tree'$ with~$\tree \ssm \{(i_\bullet, j_\circ)\} = \tree' \ssm \{(i'_\bullet, j'_\circ)\}$. We say that the flip from~$\tree$ to~$\tree'$ is \defn{slope increasing} (or simply \defn{increasing}) when the following equivalent conditions hold:
\begin{enumerate}
\item the slope of~$(i_\bullet, j_\circ)$ is smaller than the slope of~$(i'_\bullet, j'_\circ)$,
\item $i'_\bullet$ lies below (resp.~$j'_\circ$ lies above) the line passing through~$i_\bullet$ and~$j_\circ$,
\item the path~$j'_\circ i_\bullet j_\circ i'_\bullet$ in~$\tree$ forms an~$\SSS$ (resp.~the path~$i_\bullet j'_\circ i'_\bullet j_\circ$ in~$\tree'$ forms a~$\ZZZ$).
\end{enumerate}
Otherwise, the flip is called \defn{slope decreasing} (or simply \defn{decreasing}).
\end{lemma}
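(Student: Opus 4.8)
The plan is to show the three conditions are equivalent by a direct geometric analysis of the local configuration at a flip, using the characterization of flippable edges from Proposition~\ref{prop:flippable}. Recall that if $\tree \ssm \{(i_\bullet, j_\circ)\} = \tree' \ssm \{(i'_\bullet, j'_\circ)\}$, then by Proposition~\ref{prop:flippable}\,(1) both $(i_\bullet, j'_\circ)$ and $(i'_\bullet, j_\circ)$ lie in $\tree \cap \tree'$, and (as observed in the proof of Proposition~\ref{prop:flippable}) the vertices $i'_\bullet$ and $j'_\circ$ are separated by the edge $(i_\bullet, j_\circ)$, while $i_\bullet$ and $j_\circ$ are separated by $(i'_\bullet, j'_\circ)$ — the two diagonals of the quadrilateral with vertices $i_\bullet, i'_\bullet, j_\circ, j'_\circ$. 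So the whole question is about the geometry of this quadrilateral, and each of the three conditions is simply one way of recording which of its two diagonals has the larger slope.

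First I would fix coordinates: place the black vertices $I_\bullet$ and white vertices $J_\circ$ as vertices of $\polygonC$, so that all of $i_\bullet, i'_\bullet, j_\circ, j'_\circ$ have distinct abscissas, with $i_\bullet, i'_\bullet$ strictly to the left of $j_\circ, j'_\circ$ (this uses the flippability hypothesis $i'_\bullet < j'_\circ$ together with $i_\bullet < j_\circ$ and the fact that they are separated, forcing $i'_\bullet < j_\circ$ and $i_\bullet < j'_\circ$). Then $(i_\bullet, j_\circ)$ and $(i'_\bullet, j'_\circ)$ are the two ``crossing'' diagonals and $(i_\bullet, j'_\circ), (i'_\bullet, j_\circ)$ are the two ``sides'' meeting the left pair and the right pair respectively. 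Condition~(1) is the definition of increasing. For (1)$\Leftrightarrow$(2): comparing the slopes of the two diagonals through their intersection point is exactly the same as asking whether, at the abscissa of $i'_\bullet$, the point $i'_\bullet$ lies below the line $(i_\bullet, j_\circ)$; and by a symmetric statement at the abscissa of $j'_\circ$, this is equivalent to $j'_\circ$ lying above that line. (One must check the parenthetical ``resp.'' statements genuinely coincide — this follows because $i'_\bullet$ and $j'_\circ$ are on opposite sides of the segment $(i_\bullet,j_\circ)$, so ``$i'_\bullet$ below'' and ``$j'_\circ$ above'' are the same event.) For (2)$\Leftrightarrow$(3): reading off the left-to-right order of abscissas, the four vertices $j'_\circ, i_\bullet, j_\circ, i'_\bullet$ traversed as a path in $\tree$ (via the edges $(i_\bullet,j'_\circ)$, $(i_\bullet,j_\circ)$, $(i'_\bullet,j_\circ)$) form the zigzag shape $\SSS$ precisely when the middle vertices sit on the correct sides, which is precisely condition~(2); the $\ZZZ$ description for the path in $\tree'$ is the mirror statement obtained by swapping the roles of $\tree$ and $\tree'$ (equivalently, of the primed and unprimed data).

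The main obstacle — really a bookkeeping obstacle rather than a conceptual one — is to pin down the abscissa orderings carefully enough that the words ``below'', ``above'', ``$\SSS$'' and ``$\ZZZ$'' are unambiguous, since in $\polygonC$ the black and white vertices are placed above or below the baseline according to $\signature$, so a priori the ``line passing through $i_\bullet$ and $j_\circ$'' could interact with the polygon boundary in ways that need checking. The key point that dissolves this is that all four vertices lie in a common cell of $\conv(I_\bullet \cup J_\circ) \ssm (\tree \cap \tree')$ (as in the proof of Proposition~\ref{prop:pseudomanifold}), so the quadrilateral $i_\bullet i'_\bullet j_\circ j'_\circ$ is convex and nondegenerate, and within it the comparison of the two diagonals' slopes is a plain planar fact independent of the signature. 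Once that is set up, the three equivalences are immediate and the proof is a short paragraph. I would also remark that the decreasing case is obtained simply by reversing every inequality, so no separate argument is needed.
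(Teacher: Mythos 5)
Your proposal is correct, and there is nothing in the paper to compare it against: the paper explicitly leaves this as an observation to the reader with no proof supplied. Your argument — reduce to the geometry of the convex quadrilateral $i_\bullet\, i'_\bullet\, j_\circ\, j'_\circ$ (whose diagonals are the two flipped edges and whose other two $\G$-sides $(i_\bullet, j'_\circ)$ and $(i'_\bullet, j_\circ)$ belong to both trees by Proposition~\ref{prop:flippable}\,(1)), note that both blacks lie strictly to the left of both whites, and then read off the three conditions as equivalent ways of saying which diagonal has the larger slope — is the natural and intended one, and it is correct.
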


We leave the immediate proof of this observation to the reader. For example, the flip of \fref{fig:flip} is slope increasing from left to right. In this section, we show that the $(\signature, I_\bullet, J_\circ)$-increasing flip graph is always an interval of the $\signature$-Cambrian lattice of N.~Reading~\cite{Reading-CambrianLattices}.

\subsection{The $(\signature, I_\bullet, J_\circ)$-increasing flip graph}

We call \defn{$(\signature, I_\bullet, J_\circ)$-increasing flip graph}, and denote by~$\IFGIJ$, the oriented graph whose vertices are the $(\signature, I_\bullet, J_\circ)$-trees and whose arcs are increasing flips between them. An example is represented in \fref{fig:lattice}. This section is devoted to some natural properties of this graph, which will be used in the next section to show that the increasing flip graph is the Hasse diagram of a lattice.

We start with some symmetries on $(\signature, I_\bullet, J_\circ)$-increasing flip graphs which will save us later work.
For a signature~$\signature \in \{\pm\}^n$, denote by~$\horimirror{\signature}$ and~$\vertmirror{\signature}$ the signatures of~$\{\pm\}^n$ defined by~$\horimirror{\signature}_k \eqdef -\signature_k$ and~$\vertmirror{\signature}_k \eqdef \signature_{n+1-k}$ for all~$k \in [n]$. For~${I_\bullet \subseteq \llb n_\bullet ]}$ and~$J_\circ \subseteq [ n_\circ \rrb$, define~$\vertmirror{I_\bullet} \eqdef \set{(n+1-i)_\circ}{i_\bullet \in I_\bullet}$ and~$\vertmirror{J_\circ} \eqdef \set{(n+1-j)_\bullet}{j_\circ \in J_\circ}$.

\begin{lemma}
\label{lem:reflections}
The $(\horimirror{\signature}, I_\bullet, J_\circ)$- and $(\vertmirror{\signature}, \vertmirror{J_\circ}, \vertmirror{I_\bullet})$-increasing flip graphs are both isomorphic to the opposite of the $(\signature, I_\bullet, J_\circ)$-increasing flip graph.
\end{lemma}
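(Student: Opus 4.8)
The statement asserts two separate isomorphisms, and the plan is to treat each by exhibiting an explicit bijection on trees and checking that it reverses the orientation of every flip. For the first isomorphism, note that the graph~$\GIJ$ depends on~$I_\bullet$, $J_\circ$ and the relation~$i_\bullet < j_\circ$ but \emph{not} on the signature; only the geometric embedding into~$\polygonC$ (hence which edges cross, and the slopes) depends on~$\signature$. Negating the signature, $\signature \mapsto \horimirror{\signature}$, reflects~$\polygonC$ across the segment~$(0,n{+}1)$: each vertex~$i_\sq$ that was above goes below and vice versa. This reflection is an affine map of the plane, so it preserves the combinatorial type of~$\polygonC$, sends crossing pairs of edges to crossing pairs, and therefore induces the identity map on edge sets carrying $(\horimirror{\signature}, I_\bullet, J_\circ)$-forests to $(\signature, I_\bullet, J_\circ)$-forests and trees to trees. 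Since a vertical reflection negates all slopes, it reverses the comparison in condition~(1) of Lemma~\ref{lem:characterizationIncreasingFlip}: an increasing flip for~$\signature$ becomes a decreasing flip for~$\horimirror{\signature}$. Hence the identity on trees is an isomorphism from~$\IFGIJ[I_\bullet][J_\circ][\horimirror{\signature}]$ to the opposite of~$\IFGIJ$.

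For the second isomorphism the plan is the same but with the map~$(i_\bullet, j_\circ) \mapsto ((n{+}1{-}j)_\bullet, (n{+}1{-}i)_\circ)$, which is well defined on edges of~$\GIJ$ precisely because $i_\bullet < j_\circ$ forces $(n{+}1{-}j)_\bullet < (n{+}1{-}i)_\circ$, and which sends edges of~$\GIJ$ to edges of~$\GIJ[\vertmirror{J_\circ}][\vertmirror{I_\bullet}]$ by the definitions of~$\vertmirror{I_\bullet}, \vertmirror{J_\circ}$; note the hypotheses~$\min(I_\bullet) < \min(J_\circ)$ and~$\max(I_\bullet) < \max(J_\circ)$ are exactly what is needed for the reflected data~$(\vertmirror{J_\circ}, \vertmirror{I_\bullet})$ to again satisfy our standing assumption. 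Geometrically this map is the composition of the horizontal reflection of~$\polygonC$ (swapping left and right, which is why~$\signature$ becomes~$\vertmirror{\signature}$ and the roles of black and white, hence of~$I_\bullet$ and~$J_\circ$, are exchanged) with the colour swap on the two split vertices; again it is (induced by) an affine involution of the plane, so it preserves crossings and thus carries forests to forests and trees to trees bijectively. A horizontal reflection also negates all slopes, so as before it reverses the orientation of every flip, giving the claimed isomorphism onto the opposite graph.

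The only points requiring a little care are bookkeeping rather than substance: one should check that the edge~$(i_\bullet, j'_\circ)$ and~$(i'_\bullet, j_\circ)$ of a flip (in the notation of Proposition~\ref{prop:flippable}) are sent to the corresponding witnessing edges of the flipped pair, so that a flip is genuinely sent to a flip and not merely a pair of trees at combinatorial distance one; this is automatic since both maps are restrictions of linear maps on the edge set and commute with the clique-complex structure. I do not anticipate a real obstacle here: the content is entirely that reflecting the polygon is an affine symmetry, that affine maps preserve crossings, and that vertical or horizontal reflection reverses slopes. One could equivalently phrase the whole argument at the level of the $(\signature, I_\bullet, J_\circ)$-complex~$\complexIJ$, observing that both reflections are simplicial isomorphisms of complexes that happen to reverse the flip orientation, but the tree-by-tree formulation above is the most transparent.
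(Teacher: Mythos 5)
Your proposal is correct and is essentially a careful unpacking of the paper's one-line proof ("The horizontal and vertical reflections both exchange the flip directions"): in both cases one observes that the relevant reflection of~$\polygonC$ carries non-crossing subgraphs to non-crossing subgraphs (hence $(\signature, I_\bullet, J_\circ)$-trees to the corresponding reflected trees), negates all slopes, and therefore reverses flip orientations by Lemma~\ref{lem:characterizationIncreasingFlip}(1).
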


\begin{proof}
The horizontal and vertical reflections both exchange the flip directions.
\end{proof}

%
%

Let~$\treeMinIJ$ (resp.~$\treeMaxIJ$) denote the set of edges~$\delta$ of~$\GIJ$ such that there is no edge of~$\GIJ$ crossing~$\delta$ with a smaller (resp.~bigger) slope than~$\delta$.
See \fref{fig:minimalMaximal} for an example.

\begin{figure}[b]
	\capstart
	\centerline{\includegraphics[scale=.9]{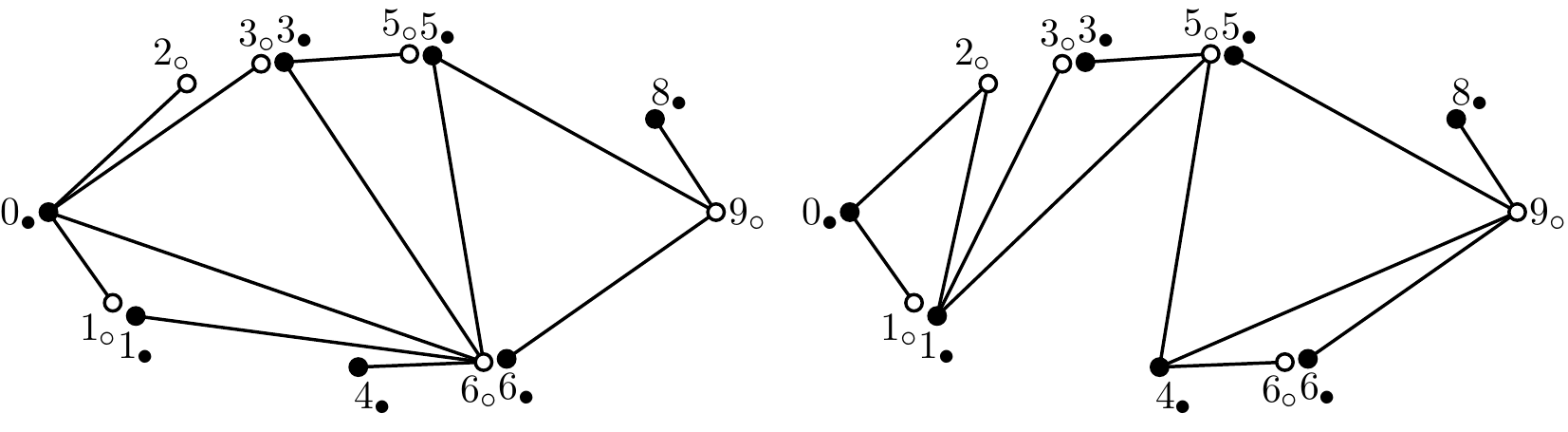}}
	\caption{The minimal (left) and maximal (right) $(\signature, I_\bullet, J_\circ)$-trees. Here, ${\signature = {-}{+}{+}{-}{+}{-}{-}{+}}$, ${I_\bullet = \llb 8_\bullet ] \ssm \{2_\bullet, 7_\bullet\}}$ and~$J_\circ = [ 8_\circ \rrb \ssm \{4_\circ, 7_\circ, 8_\circ\}$.}
	\label{fig:minimalMaximal}
\end{figure}
\begin{figure}
	\capstart
	\centerline{\includegraphics[scale=.57]{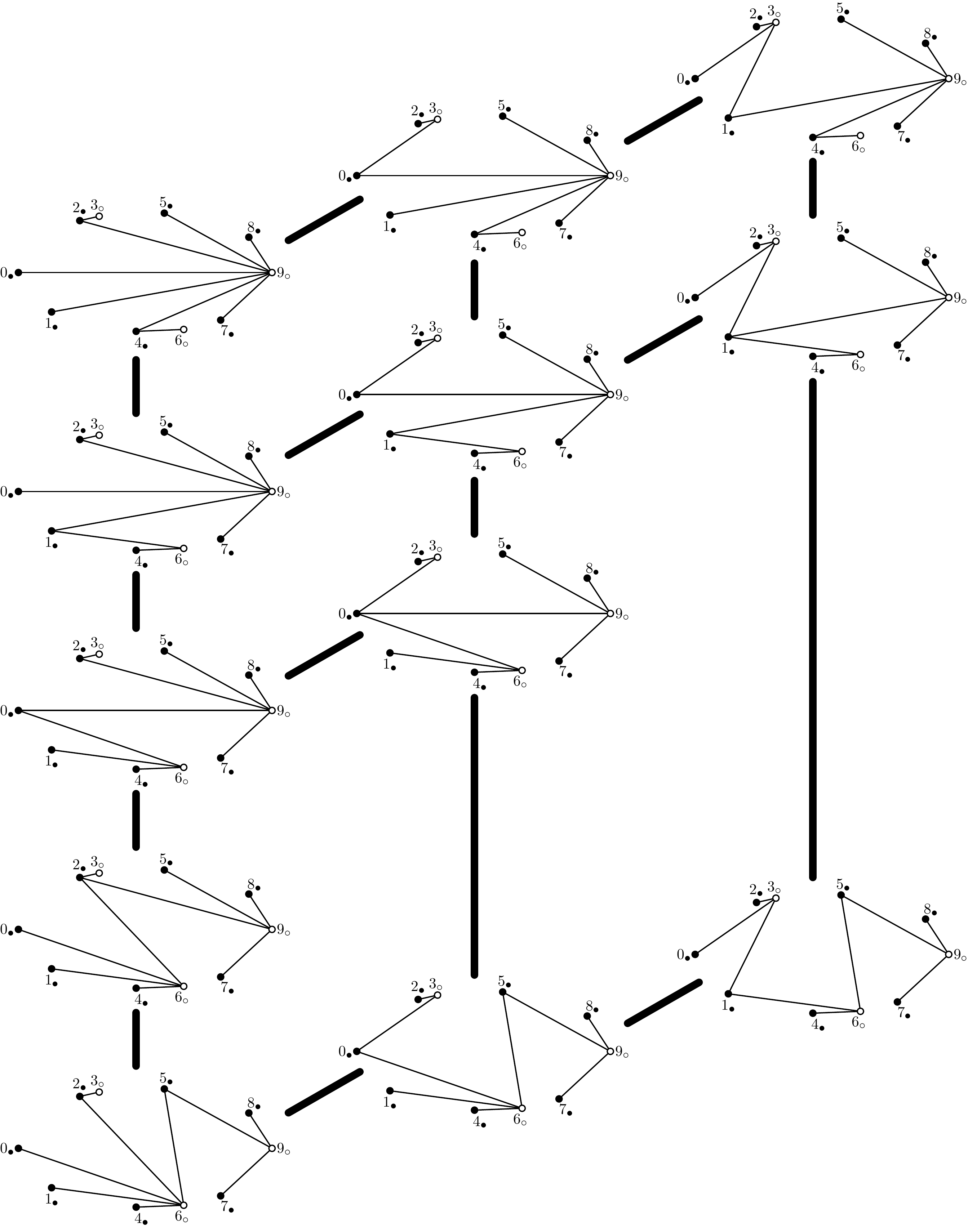}}
	\caption{The $(\signature, I_\bullet, J_\circ)$-lattice on $(\signature, I_\bullet, J_\circ)$-trees. Increasing flips are oriented upwards. Here, ${\signature = {-}{+}{+}{-}{+}{-}{-}{+}}$, ${I_\bullet = \llb 8_\bullet ] \ssm \{3_\bullet, 6_\bullet\}}$ and~$J_\circ = \{3_\circ, 6_\circ, 9_\circ\}$. Compare to Figures~\ref{fig:mixedSubdivision} and~\ref{fig:tropicalHyperplaneArrangement}.}
	\label{fig:lattice}
\end{figure}

\begin{lemma}
The sets~$\treeMinIJ$ and~$\treeMaxIJ$ are $(\signature, I_\bullet, J_\circ)$-trees
\end{lemma}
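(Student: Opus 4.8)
The plan is to show that $\treeMinIJ$ is a $(\signature, I_\bullet, J_\circ)$-tree by verifying that it is a maximal non-crossing subgraph of $\GIJ$ and then invoking Proposition~\ref{prop:trees}; the case of $\treeMaxIJ$ follows by the horizontal reflection symmetry of Lemma~\ref{lem:reflections} (since $\horimirror{(\cdot)}$ reverses all slopes, it exchanges $\treeMin$ and $\treeMax$). So it suffices to treat $\treeMinIJ$. First I would check that $\treeMinIJ$ is non-crossing: if $(i_\bullet, j_\circ)$ and $(i'_\bullet, j'_\circ)$ both lie in $\treeMinIJ$ and cross, then one of them has strictly smaller slope than the other — say $(i_\bullet,j_\circ)$ is the one with smaller slope — and it crosses $(i'_\bullet,j'_\circ)$, which has bigger slope, contradicting the defining property that no edge crossing $(i'_\bullet,j'_\circ)$ has smaller slope. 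Wait — I need the edge of \emph{smaller} slope among two crossing edges to witness the failure for the edge of \emph{bigger} slope, and vice versa; in either reading one of the two defining conditions is violated, so $\treeMinIJ$ is indeed non-crossing.

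Next I would check maximality: given any edge $\delta = (i_\bullet, j_\circ) \in \GIJ \ssm \treeMinIJ$, I must exhibit an edge of $\treeMinIJ$ crossing $\delta$. By definition of $\treeMinIJ$, since $\delta \notin \treeMinIJ$ there is some edge $\epsilon_0$ of $\GIJ$ crossing $\delta$ with strictly smaller slope. Now iterate: if $\epsilon_0 \notin \treeMinIJ$, replace it by an edge $\epsilon_1$ crossing $\epsilon_0$ with still smaller slope, and so on. The key point is that every edge in this chain still crosses $\delta$ — or, more carefully, that the chain of decreasing slopes cannot wander away from $\delta$. The clean way to make this work is to choose, at each step, not an arbitrary edge of smaller slope crossing the current edge, but the edge of \emph{minimal slope} among all edges of $\GIJ$ crossing $\delta$: call it $\epsilon$. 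I claim $\epsilon \in \treeMinIJ$. Indeed, if not, some edge $\eta$ crosses $\epsilon$ with smaller slope than $\epsilon$; since $\epsilon$ had minimal slope among edges crossing $\delta$, this $\eta$ does not cross $\delta$. Because $\GIJ$ is the full bipartite "crossing-allowed" graph on $I_\bullet \cup J_\circ$, if $\epsilon = (a_\bullet, b_\circ)$ crosses both $\delta = (i_\bullet, j_\circ)$ and $\eta = (c_\bullet, d_\circ)$, then $a_\bullet$ and $b_\circ$ separate $\{i_\bullet, c_\bullet\}$ from... — the point is to extract from the separation data a fourth edge, say $(i_\bullet, d_\circ)$ or $(c_\bullet, j_\circ)$, that is a legitimate edge of $\GIJ$ (the index inequality $i'_\bullet < j'_\circ$ holds because the geometric crossing forces it), crosses $\delta$, and has slope strictly between those of $\eta$ and $\epsilon$ — hence smaller than $\epsilon$'s — contradicting minimality of $\epsilon$ among edges crossing $\delta$. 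Thus $\epsilon \in \treeMinIJ$, proving maximality.

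The main obstacle I anticipate is the geometric bookkeeping in that last step: showing that when one edge crosses two others, some "diagonal" recombination of endpoints yields a genuine edge of $\GIJ$ (respecting $i_\bullet < j_\circ$) whose slope is sandwiched appropriately. This is a finite case analysis on the cyclic/linear order of the six endpoints around $\conv(I_\bullet \cup J_\circ)$, complicated slightly by the $\signature$-dependent placement of black and white vertices above or below the base segment, which affects which recombinations are geometrically realizable and which way the slope inequalities point. I would organize it by the relative positions of $i_\bullet$ versus the black endpoint of $\epsilon$ and $j_\circ$ versus its white endpoint, reducing to two essentially symmetric configurations and handling each by an explicit slope comparison. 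Everything else — non-crossingness, the reduction from $\treeMax$ to $\treeMin$, and the final appeal to Proposition~\ref{prop:trees} — is routine.
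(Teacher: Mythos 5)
Your overall strategy matches the paper's exactly: prove the claim for $\treeMinIJ$, reduce $\treeMaxIJ$ to it by symmetry, verify non-crossingness directly from the definition, and establish maximality by taking the edge $\epsilon$ of minimal slope among all edges of $\GIJ$ crossing the given edge $\delta$ and showing $\epsilon \in \treeMinIJ$ by deriving a contradiction from any hypothetical $\eta$ crossing $\epsilon$ with smaller slope. The non-crossing step and the reduction to $\treeMinIJ$ are fine.

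The gap is in the recombination step, which you correctly flag as the crux but then sketch incorrectly. You propose the candidate witnesses $(i_\bullet, d_\circ)$ or $(c_\bullet, j_\circ)$ — recombinations of an endpoint of $\delta = (i_\bullet, j_\circ)$ with an endpoint of $\eta = (c_\bullet, d_\circ)$. But any such edge shares an endpoint with $\delta$ and therefore cannot cross $\delta$ in its interior, so it can never contradict the minimality of $\epsilon$ among edges crossing $\delta$. The correct recombination, as in the paper, is of the two \emph{crossing} edges $\epsilon = (a_\bullet, b_\circ)$ and $\eta = (c_\bullet, d_\circ)$: since they cross, the four points form a convex quadrilateral with $\epsilon, \eta$ as diagonals and $(a_\bullet, d_\circ)$, $(c_\bullet, b_\circ)$ as the two bipartite sides. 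As $\delta$ crosses the diagonal $\epsilon$ but not the diagonal $\eta$, it must cross one of those sides; and each side has slope strictly between the slopes of $\eta$ and $\epsilon$, hence strictly smaller than $\epsilon$'s, contradicting minimality. (One also checks that the relevant side is a genuine edge of $\GIJ$, i.e.\ respects the order constraint, which the crossing configuration forces.) So the idea you identify as the obstacle is indeed where the argument lives, but the specific edges you name cannot do the job; you need to recombine $\epsilon$ with $\eta$, not $\delta$ with $\eta$.
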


\begin{proof}
We prove the statement for~$\treeMinIJ$, the statement for~$\treeMaxIJ$ follows by symmetry. The set~$\treeMinIJ$ is clearly non-crossing since among any two crossing edges of~$\GIJ$, only the one of smallest slope can belong to~$\treeMinIJ$. To see that it is inclusion maximal, consider an edge~$(i_\bullet, j_\circ)$ not in~$\treeMinIJ$. Consider the edge~$(i'_\bullet, j'_\circ)$ with the minimal slope among all edges of~$\GIJ$ that cross~$(i_\bullet, j_\circ)$. If~$(i'_\bullet, j'_\circ)$ is not in~$\treeMinIJ$, it is crossed by an edge~$(i''_\bullet, j''_\bullet)$ with smaller slope. Then either~$(i''_\bullet, j''_\circ)$, or~$(i'_\bullet, j''_\circ)$, or~$(i''_\bullet, j'_\circ)$ still crosses~$(i_\bullet, j_\circ)$ and contradicts the minimality of~$(i'_\bullet, j'_\circ)$. We conclude that~$(i_\bullet, j_\circ)$ is crossed by an edge of~$\treeMinIJ$. 
\end{proof}

\begin{proposition}
The $(\signature, I_\bullet, J_\circ)$-increasing flip graph~$\IFGIJ$ is acyclic with a unique source~$\treeMinIJ$ and a unique sink~$\treeMaxIJ$.
\end{proposition}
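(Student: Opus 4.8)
The plan is to establish the three assertions — acyclicity, unique source, unique sink — with the bulk of the work going into showing that $\treeMinIJ$ is the unique source (the sink statement then follows by the symmetry of Lemma~\ref{lem:reflections}, exchanging flip directions). Acyclicity is the easy part: since each increasing flip strictly replaces an edge by one of strictly larger slope (Lemma~\ref{lem:characterizationIncreasingFlip}(1)), the multiset of slopes of the edges of a $(\signature, I_\bullet, J_\circ)$-tree strictly increases along any increasing flip (in the obvious lexicographic/multiset order on the sorted slope sequence). Hence no directed cycle can exist in $\IFGIJ$. I would spell this out by defining the slope vector and noting that a flip from $\tree$ to $\tree'$ removes slope $s$ and inserts slope $s' > s$ while keeping all other entries fixed, so the sorted slope vector increases componentwise-after-sorting, giving a strict monotone invariant.

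For the source statement, I first note that $\treeMinIJ$ is a source: every edge $(i_\bullet, j_\circ)$ of $\treeMinIJ$ is, by definition of $\treeMinIJ$, not crossed by any edge of $\GIJ$ of smaller slope, so by Lemma~\ref{lem:characterizationIncreasingFlip} no edge of $\treeMinIJ$ admits a decreasing flip; equivalently, $\treeMinIJ$ has no incoming increasing flip. The real content is uniqueness: I must show every $(\signature, I_\bullet, J_\circ)$-tree $\tree \ne \treeMinIJ$ has an incoming arc, i.e.\ admits a decreasing flip. Suppose $\tree \ne \treeMinIJ$; then $\tree$ contains an edge $(i_\bullet, j_\circ) \notin \treeMinIJ$, hence $(i_\bullet, j_\circ)$ is crossed by some edge of $\GIJ$ of strictly smaller slope. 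I want to produce from this a \emph{flippable} edge of $\tree$ that flips down. The natural approach: among all edges $\delta$ of $\tree \setminus \treeMinIJ$, pick one of \emph{minimal slope}; I claim $\delta$ is flippable and its flip is decreasing. To verify flippability via Proposition~\ref{prop:flippable}(2), writing $\delta = (i_\bullet, j_\circ)$, I need $i'_\bullet \in I_\bullet$, $j'_\circ \in J_\circ$ with $i'_\bullet < j'_\circ$ and $(i_\bullet, j'_\circ), (i'_\bullet, j_\circ) \in \tree$ such that flipping yields an edge of smaller slope, i.e.\ (by Lemma~\ref{lem:characterizationIncreasingFlip}(3)) the relevant four-vertex path forms a $\ZZZ$ rather than an $\SSS$. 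Here is where the edge $(i''_\bullet, j''_\circ)$ of $\GIJ$ crossing $\delta$ with smaller slope comes in: it separates $i_\bullet$ from $j_\circ$, so looking at the neighbors of $i_\bullet$ and of $j_\circ$ in $\tree$ on the "lower-slope side" of $\delta$, there must exist such neighbors (else $\delta$ would not be crossed inside $\tree$'s cell by anything of smaller slope, contradicting $\delta \notin \treeMinIJ$ after taking a minimal such $\delta$). Taking $j'_\circ$ and $i'_\bullet$ to be the extreme such neighbors and arguing as in the proof of Proposition~\ref{prop:flippable}(2) that one may assume consecutiveness, I get the downward flip.

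\textbf{Main obstacle.} The delicate point is the argument that the minimal-slope edge $\delta \in \tree \setminus \treeMinIJ$ is actually flippable downward — one has to be careful that the crossing edge witnessing $\delta \notin \treeMinIJ$ lies in the \emph{right cell} of $\conv(I_\bullet \cup J_\circ) \setminus (\tree \setminus \{\delta\})$, so that the local picture around $\delta$ inside $\tree$ really does contain a $\ZZZ$-path; the minimality of the slope of $\delta$ among edges of $\tree \setminus \treeMinIJ$ is exactly what forces the neighbors of $i_\bullet$ and $j_\circ$ produced by this analysis to again have slope $<$ slope$(\delta)$, hence to already lie in $\treeMinIJ$ (or at least not to obstruct the flip). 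I expect this to require a short case analysis on which of the two endpoints of $\delta$ is "pushed", mirroring the crossing analysis in the proof of Proposition~\ref{prop:pseudomanifold}. Once this is done, the sink statement for $\treeMaxIJ$ is immediate by applying the horizontal reflection of Lemma~\ref{lem:reflections}, which conjugates $\IFGIJ$ to its opposite and swaps $\treeMinIJ \leftrightarrow \treeMaxIJ$.
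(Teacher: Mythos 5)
Your treatment of acyclicity (strict increase of slopes along an increasing flip) and of $\treeMinIJ$ being a source, together with the reduction of the sink statement to the source statement via Lemma~\ref{lem:reflections}, are correct and coincide with the paper's.

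However, there is a genuine gap in the heart of the argument, namely the claim that the minimal-slope edge $\delta$ of $\tree \ssm \treeMinIJ$ admits a decreasing flip. This is false. Take $\signature = {-}{-}{-}$, $I_\bullet = \llb 3_\bullet ]$, $J_\circ = [ 3_\circ \rrb$, and let $\tree$ be the $\signature$-tree whose two non-boundary edges are $(1_\bullet, 3_\circ)$ and $(1_\bullet, 4_\circ)$. Then $\treeMinIJ$ has non-boundary edges $(0_\bullet, 2_\circ)$ and $(0_\bullet, 3_\circ)$, so $\tree \ssm \treeMinIJ = \{(1_\bullet, 3_\circ), (1_\bullet, 4_\circ)\}$ and your candidate is $\delta = (1_\bullet, 3_\circ)$ (the one of smaller slope). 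But the unique flip of $\delta$ in $\tree$ replaces it by $(2_\bullet, 4_\circ)$, whose slope is \emph{larger}: this flip is increasing, not decreasing. The decreasing flip available at $\tree$ is $(1_\bullet, 4_\circ) \to (0_\bullet, 3_\circ)$, and $(1_\bullet, 4_\circ)$ is the edge of $\tree \ssm \treeMinIJ$ of \emph{maximal}, not minimal, slope. Your heuristic that minimality of slope ``forces the neighbors of $i_\bullet$ and $j_\circ$ to again have smaller slope, hence to lie in $\treeMinIJ$'' is also not sound: in this example the $\tree$-edge $(2_\bullet, 3_\circ)$ at $j_\circ = 3_\circ$ does lie in $\treeMinIJ$ (it is a boundary edge), yet its slope exceeds that of $\delta$, and the $\tree$-edge $(1_\bullet, 4_\circ)$ at $i_\bullet = 1_\bullet$ has larger slope and is not in $\treeMinIJ$ at all; membership in $\treeMinIJ$ is not a slope-ordering condition.

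The paper selects the edge to flip by a different, non-symmetric rule: fix any $(i_\bullet, j_\circ) \in \treeMinIJ \ssm \tree$ (an edge of the \emph{target} that is missing from $\tree$, rather than an edge of $\tree$ missing from the target), and flip the edge $(i'_\bullet, j'_\circ)$ of \emph{maximal} slope among the edges of $\tree$ crossing $(i_\bullet, j_\circ)$; one then exhibits explicit witnesses $i''_\bullet, j''_\circ$ lying on the correct sides of the line $(i'_\bullet, j'_\circ)$ to invoke Proposition~\ref{prop:flippable}\,(2) and Lemma~\ref{lem:characterizationIncreasingFlip}\,(2). Your selection rule would need to be replaced by one of this kind; as the example shows, ``minimal slope in $\tree \ssm \treeMinIJ$'' cannot be patched into a proof.
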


\begin{proof}
The $(\signature, I_\bullet, J_\circ)$-increasing flip graph~$\IFGIJ$ is clearly acyclic since an increasing flip increases the sum of the slopes of the edges of the $(\signature, I_\bullet, J_\circ)$-tree.

All flips in~$\treeMinIJ$ are increasing by definition, so that~$\treeMinIJ$ is indeed a source. Conversely, any $(\signature, I_\bullet, J_\circ)$-tree~$\tree$ distinct from~$\treeMinIJ$ has a decreasing flip. Indeed, we claim that for any edge~$(i_\bullet, j_\circ) \in \treeMinIJ \ssm \tree$, the edge~$(i'_\bullet, j'_\circ)$ with maximal slope among the edges of~$\tree$ that cross~$(i_\bullet, j_\circ)$ is flippable and its flip is decreasing. To see it, observe first that there exists~${i''_\bullet \in I_\bullet}$ strictly above the line~$(i'_\bullet, j'_\circ)$ such that~$(i''_\bullet, j'_\circ)$ belongs to~$\tree$ and~${i''_\bullet \le \max(i_\bullet, i'_\bullet)}$ . Indeed, take either~$i_\bullet$ or the black endpoint of the edge of~$\tree$ crossing~$(i_\bullet, j'_\circ)$ closest to~$j'_\circ$. Similarly, there exists~$j''_\circ \in J_\circ$ strictly below the line~$(i'_\bullet, j'_\circ)$ such that~$(i'_\bullet, j''_\circ)$ belongs to~$\tree$ and~$j''_\circ \ge \min(j_\circ, j'_\circ)$. Since~$i''_\bullet < j''_\circ$ and~$(i''_\bullet, j'_\circ)$ and~$(i'_\bullet, j''_\circ)$ both belong to~$\tree$, the edge~$(i'_\bullet, j'_\circ)$ is flippable by Proposition~\ref{prop:flippable}\,(2), and since~$i''_\bullet$ is above~$(i'_\bullet, j'_\circ)$ while~$j''_\circ$ is below~$(i'_\bullet, j'_\circ)$, the flip is decreasing by Lemma~\ref{lem:characterizationIncreasingFlip}\,(2).
We conclude that $\treeMinIJ$ is the unique source of the $(\signature, I_\bullet, J_\circ)$-increasing flip graph.
The proof is symmetric for~$\treeMaxIJ$.
\end{proof}

We conclude with a property of the links of the $(\signature, I_\bullet, J_\circ)$-complex.
This property was recently coined \defn{non-revisiting chain property} in~\cite{BarnardMcConville} in the context of graph associahedra.

\begin{proposition}
\label{prop:facialInterval}
The set of $(\signature, I_\bullet, J_\circ)$-trees containing any given $(\signature, I_\bullet, J_\circ)$-forest forms an interval of the $(\signature, I_\bullet, J_\circ)$-increasing flip graph~$\IFGIJ$.
\end{proposition}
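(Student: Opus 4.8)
The plan is to fix a $(\signature, I_\bullet, J_\circ)$-forest $\forest$ and show that the set $\mathcal{I}(\forest)$ of $(\signature, I_\bullet, J_\circ)$-trees containing $\forest$ is both connected under increasing flips and closed under the "betweenness" relation coming from increasing flips. The key observation is that the link of $\forest$ in the $(\signature, I_\bullet, J_\circ)$-complex $\complexIJ$ is itself a $(\signature, I_\bullet, J_\circ)$-complex of a smaller instance: removing $\forest$ subdivides $\conv(I_\bullet \cup J_\circ)$ into cells, and within each cell the non-crossing subgraphs of $\GIJ$ that are allowed (given that $\forest$ is already present) are exactly the $(\signature, I'_\bullet, J'_\circ)$-forests of the sub-polygon determined by that cell, for suitable signature and vertex sets read off from the cell boundary. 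So flips supported on $\forest$-containing trees decompose as a product of flip structures on these smaller instances, and the recursion reduces everything to the single-cell case.

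The first step is therefore to make the cell decomposition precise: given $\forest$, list the cells $C_1, \dots, C_m$ of $\conv(I_\bullet \cup J_\circ) \ssm \forest$ that contain at least one relevant edge, and observe that every $(\signature, I_\bullet, J_\circ)$-tree containing $\forest$ is obtained by independently triangulating each $C_r$, i.e. choosing a $(\signature^{(r)}, I^{(r)}_\bullet, J^{(r)}_\circ)$-tree for each $r$ (this uses Proposition~\ref{prop:trees} applied cell-by-cell, noting that the boundary of each cell already lies in $\forest$). The second step is to check that a flip of an $\forest$-containing tree $\tree$ is precisely a flip inside exactly one cell $C_r$ — an edge outside all the $C_r$ is unflippable within $\mathcal{I}(\forest)$, since flipping it would require removing an edge of $\forest$ — and that the increasing/decreasing orientation of Lemma~\ref{lem:characterizationIncreasingFlip} is determined purely by slopes, hence restricts correctly to each cell's increasing flip graph $\IFGIJ[I^{(r)}_\bullet][J^{(r)}_\circ][\signature^{(r)}]$. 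Consequently the increasing flip graph restricted to $\mathcal{I}(\forest)$ is the Cartesian product (as oriented graphs) of the $\IFGIJ[I^{(r)}_\bullet][J^{(r)}_\circ][\signature^{(r)}]$.

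The third step is to conclude using two facts about each factor $\IFGIJ[I^{(r)}_\bullet][J^{(r)}_\circ][\signature^{(r)}]$, established earlier in the section: it is acyclic with a unique source $\treeMinIJ[I^{(r)}_\bullet][J^{(r)}_\circ][\signature^{(r)}]$ and a unique sink $\treeMaxIJ[I^{(r)}_\bullet][J^{(r)}_\circ][\signature^{(r)}]$. A Cartesian product of oriented graphs each having a unique source and unique sink, and each being connected (which follows from having a unique source and sink together with the local structure — one can always perform an increasing or decreasing flip toward or away from any target), again has a unique source (the tuple of cell-minima, which reassembles into $\treeMinIJ$ restricted appropriately) and unique sink, and moreover the interval property is inherited: a product of intervals is an interval. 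So $\mathcal{I}(\forest)$, identified with this product, is an interval of $\IFGIJ$, with minimum the unique tree containing $\forest$ all of whose cell-triangulations are minimal and maximum defined dually.

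The main obstacle I expect is the bookkeeping in the first two steps: correctly identifying the signature $\signature^{(r)}$ and the black/white vertex subsets $I^{(r)}_\bullet, J^{(r)}_\circ$ attached to each cell $C_r$ — the cell boundary alternates between edges of $\forest$ and arcs of $\partial \conv(I_\bullet \cup J_\circ)$, and one must verify that the induced structure genuinely satisfies the standing hypotheses ($\min I^{(r)}_\bullet < \min J^{(r)}_\circ$ and $\max I^{(r)}_\bullet < \max J^{(r)}_\circ$) so that the earlier results apply verbatim — together with checking that the "product of intervals is an interval" step is legitimate for the increasing flip graph rather than merely for an abstractly defined poset (which, strictly speaking, we do not yet know is a poset at this point in the paper; hence the argument must be phrased directly in terms of the oriented graph structure and its sources/sinks, not in lattice-theoretic language).
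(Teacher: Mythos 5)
You correctly identify the cell decomposition and the product structure of the subgraph of $\IFGIJ$ induced by the set $\mathcal{I}(\forest)$ of trees containing $\forest$; this matches the first half of the paper's proof (though note the paper does not need modified signatures per cell: it simply sets $I_\bullet^k \eqdef I_\bullet \cap C^k$ and $J_\circ^k \eqdef J_\circ \cap C^k$ and reuses $\signature$). The genuine gap is in the concluding step. Knowing that the \emph{induced} subgraph on $\mathcal{I}(\forest)$ is a product with a unique source and a unique sink says nothing about how $\mathcal{I}(\forest)$ sits inside the ambient graph $\IFGIJ$; in particular it does not rule out a chain of increasing flips that leaves $\mathcal{I}(\forest)$ (by flipping out some $\delta \in \forest$) and later re-enters it (by flipping $\delta$ back in). Your appeal to ``a product of intervals is an interval'' does not address this --- the factors are not intervals of anything, and the point at issue is precisely whether $\mathcal{I}(\forest)$ is closed under increasing-flip betweenness in the full graph.

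The paper closes this gap by proving what it calls the non-revisiting chain property, via a slope-monotonicity argument: if $\tree \to \tree'$ is an increasing flip and an edge $\delta$ of $\GIJ$ crosses some edge of $\tree$ with strictly larger slope, then $\delta$ also crosses some edge of $\tree'$ with strictly larger slope (the case where the crossed edge is the flipped diagonal requires analyzing the flip quadrilateral and invoking Proposition~\ref{prop:flippable}\,(1)). Inducting along any chain of increasing flips, once an edge is flipped out it forever crosses an edge of larger slope and hence can never be flipped back in by an increasing flip. This is the essential content of the proof and it is absent from your proposal. You do flag the concern in your last paragraph, but you characterize it as bookkeeping about oriented graphs versus posets, when in fact it is the main idea requiring a separate geometric argument.
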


\begin{proof}
Consider a $(\signature, I_\bullet, J_\circ)$-forest~$\forest$. Denote by~$C^1, \dots, C^p$ the cells of~$\forest$ (\ie the closures of the connected components of the complement of~$\forest$ in~$\conv(I_\bullet \cup J_\circ)$). For~$k \in [p]$, define~$I_\bullet^k \eqdef I_\bullet \cap C^k$ and~$J_\circ^k \eqdef J_\circ \cap C^k$. Then the subgraph of the $(\signature, I_\bullet, J_\circ)$-increasing flip graph~$\IFGIJ$ induced by the $(\signature, I_\bullet, J_\circ)$-trees containing~$\forest$ is isomorphic to the Cartesian product~$\IFGIJ[I_\bullet^1][J_\circ^1] \times \dots \times \IFGIJ[I_\bullet^p][J_\circ^p]$. We claim that it actually coincides with the interval of the $(\signature, I_\bullet, J_\circ)$-increasing flip graph~$\IFGIJ$ between~$\forest \cup \treeMinIJ[I_\bullet^1][J_\circ^1] \cup \dots \cup \treeMinIJ[I_\bullet^p][J_\circ^p]$ and~$\forest \cup \treeMaxIJ[I_\bullet^1][J_\circ^1] \cup \dots \cup\treeMaxIJ[I_\bullet^p][J_\circ^p]$. For this, we just need to prove that there is no chain of increasing flips that flips out an edge~$\delta$ and later flips back in~$\delta$.

Consider two adjacent $(\signature, I_\bullet, J_\circ)$-trees~$\tree$ and~$\tree'$ with~$\tree \ssm \{(i_\bullet, j_\circ)\} = \tree' \ssm \{(i'_\bullet, j'_\circ)\}$ such that the flip from~$\tree$ to~$\tree'$ is increasing. We claim that any edge~$\delta$ of~$\GIJ$ crossing an edge~$\gamma$ of~$\tree$ with bigger slope also crosses an edge~$\gamma'$ of~$\tree'$ with bigger slope. Indeed, if~$\gamma \ne (i_\bullet, j_\circ)$, then~$\gamma$ still belongs to~$\tree'$ and~$\gamma' = \gamma$ suits. If~$\gamma = (i_\bullet, j_\circ)$, then $\delta \ne (i'_\bullet, j'_\circ)$ since the slope of~$\delta$ is smaller than that of~$\gamma = (i_\bullet, j_\circ)$ in turn smaller than that of~$(i'_\bullet, j'_\circ)$. Therefore, $\delta$ must cross two boundary edges of the square~$i_\bullet i'_\bullet j_\circ j'_\circ$. Since~$\delta$ crosses~$(i_\bullet, j_\circ)$, it thus crosses either~$(i'_\bullet, j'_\circ)$, or~$(i_\circ, j'_\circ)$, or~$(i'_\bullet, j_\circ)$, or the three of them (in which case we choose~$\gamma' = (i'_\bullet, j'_\circ)$). Note that these three edges belong to~$\tree'$ by Proposition~\ref{prop:flippable}\,(1). Moreover, the slope of~$\delta$ is still smaller than the slope of~$\gamma'$.

Consider now a sequence~$\tree_1, \dots, \tree_p$ of $(\signature, I_\bullet, J_\circ)$-trees related by increasing flips. Assume that an edge $\delta$ is flipped out from~$\tree_k$ to~$\tree_{k+1}$. Then~$\delta$ crosses an edge of~$\tree_{k+1}$ with bigger slope, and thus by induction it crosses an edge of~$\tree_\ell$ with bigger slope for any~$\ell > k$. Therefore, $\delta$ cannot be flipped back in by an increasing flip.
\end{proof}

\subsection{The $(\signature, I_\bullet, J_\circ)$-lattice}

The goal of this section is to prove the following statement.

\begin{theorem}
\label{thm:lattice}
The~$(\signature, I_\bullet, J_\circ)$-increasing flip graph~$\IFGIJ$ is the Hasse diagram of a lattice, called \defn{$(\signature, I_\bullet, J_\circ)$-lattice} and denoted by~$\LatIJ$.
\end{theorem}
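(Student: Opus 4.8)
The plan is to establish that the $(\signature, I_\bullet, J_\circ)$-increasing flip graph $\IFGIJ$ is the Hasse diagram of a lattice by realizing it as an interval of the $\signature$-Cambrian lattice of N.~Reading, which is already known to be a lattice. The global strategy therefore has two parts: first, reduce from arbitrary $(I_\bullet, J_\circ)$ to the full case $I_\bullet = \llb n_\bullet ]$, $J_\circ = [ n_\circ \rrb$; second, identify the full $\signature$-increasing flip graph with the Hasse diagram of the $\signature$-Cambrian lattice.

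For the reduction, I would first handle the irrelevant edges: by the discussion following Proposition~\ref{prop:pseudomanifold}, every $(\signature, I_\bullet, J_\circ)$-tree contains all irrelevant edges, so $\complexIJ$ is a pyramid over them and we may restrict attention to the relevant edges without changing the flip graph. Next, to pass from a general $(I_\bullet, J_\circ)$ to the full case, the natural device is to embed the set of $(\signature, I_\bullet, J_\circ)$-trees into the set of $\signature$-trees by completing each $(\signature, I_\bullet, J_\circ)$-tree in a canonical minimal way along the ``missing'' vertices — concretely, a $(\signature, I_\bullet, J_\circ)$-tree $\tree$ is a forest of $\G$ (a $\signature$-forest), and Proposition~\ref{prop:facialInterval} tells us that the set of $\signature$-trees containing $\tree$ is an interval $[\projDown(\tree), \projUp(\tree)]$ of the $\signature$-increasing flip graph. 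I would argue that this assignment is a lattice congruence–like projection: the map $\tree \mapsto \projDown(\tree)$ (sending a $(\signature, I_\bullet, J_\circ)$-tree to the minimum of its interval of completions) is an order embedding of $\IFGIJ$ onto the order ideal/filter intersection inside $\Lat$, and more precisely onto an interval. To make this rigorous one needs: (i) increasing flips in $\IFGIJ$ correspond to increasing flip paths in $\Lat$ between the corresponding intervals (this follows from the slope-monotonicity arguments in the proof of Proposition~\ref{prop:facialInterval}, which already show no edge can be flipped out and later back in); (ii) the minimal completions of the $(\signature,I_\bullet,J_\circ)$-trees form exactly the interval of $\Lat$ between the minimal completion of $\treeMinIJ$ and the minimal completion of $\treeMaxIJ$. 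Granting that $\Lat$ is a lattice, an interval of a lattice is a lattice, so $\LatIJ$ is a lattice and $\IFGIJ$ is its Hasse diagram.

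It therefore remains — and this is deferred to the subsequent section of the paper, via Proposition~\ref{prop:CambrianTriangulations} and the identification of the dual graph of $\CambTriang$ with the Hasse diagram of the $\signature$-Cambrian lattice — to know that $\Lat$ itself is a lattice. For the self-contained argument I would instead give a direct proof that $\IFGIJ$ is a lattice using the standard criterion: an acyclic graph with a unique source and unique sink on a finite vertex set is the Hasse diagram of a lattice provided every pair of elements has a join (equivalently a meet), and by self-duality (Lemma~\ref{lem:reflections}, using $\horimirror{\signature}$) it suffices to produce joins. We already know from the previous propositions that $\IFGIJ$ is acyclic with source $\treeMinIJ$ and sink $\treeMaxIJ$, so the only real content is the existence of joins. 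Here I would use Proposition~\ref{prop:facialInterval}: given two $(\signature, I_\bullet, J_\circ)$-trees $\tree_1, \tree_2$, consider the forest $\forest \eqdef \tree_1 \cap \tree_2$; the trees containing $\forest$ form an interval $[\,a_\forest, b_\forest\,]$, both $\tree_1$ and $\tree_2$ lie in it, and one shows the join $\tree_1 \join \tree_2$ (computed in the finite poset generated by the flip order) again contains $\forest$ and hence is well-defined — the key point being that an increasing flip out of a common edge of $\tree_1$ and $\tree_2$ can be ``redirected'' so as to remain in the interval, using the crossing/slope combinatorics of Proposition~\ref{prop:flippable} and Lemma~\ref{lem:characterizationIncreasingFlip}.

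The main obstacle I anticipate is precisely the existence of joins (or the interval-in-$\Lat$ claim, depending on which route is taken): acyclicity and the unique source/sink are already in hand, but ruling out two incomparable upper bounds with no least one requires genuinely controlling how chains of increasing flips interact. The proof of Proposition~\ref{prop:facialInterval} gives the essential monotonicity tool — once an edge $\delta$ is flipped out, it forever crosses some tree-edge of larger slope — and the plan is to bootstrap from it: given $\tree_1, \tree_2$ and any common upper bound $\tree$, the edges present in $\tree_1 \cap \tree_2$ that get flipped out on the way to $\tree$ can be flipped out first (in a canonical order by slope), landing in a well-defined tree $\tree_1 \join \tree_2$ below every upper bound. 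Verifying that this candidate is independent of choices and is actually reachable by increasing flips from both $\tree_1$ and $\tree_2$ is the technical heart of the argument; everything else is bookkeeping with the polygon $\polygonC$ and the square $i_\bullet i'_\bullet j_\circ j'_\circ$ that already appears in the preceding proofs.
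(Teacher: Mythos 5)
Your high-level plan — realize $\IFGIJ$ as an interval of the $\signature$-Cambrian lattice $\Lat$, which is already known to be a lattice — is exactly the strategy the paper adopts (Theorem~\ref{thm:interval}), and you correctly spot that Proposition~\ref{prop:facialInterval} is a crucial tool, and that a $(\signature, I_\bullet, J_\circ)$-tree is in particular a $\signature$-forest so that the set of its $\signature$-tree completions is an interval. However, the actual content of the theorem is precisely your deferred item~(ii): that the minimal completions $\projDown(\tree)$ ranging over all $(\signature, I_\bullet, J_\circ)$-trees form an interval of $\Lat$, with $\projDown$ an isomorphism onto it. You state this needs to be shown but do not show it, and the obstacle is real: this is not a direct consequence of Proposition~\ref{prop:facialInterval}, because the family of intervals $[\projDown(\tree), \projUp(\tree)]$ as $\tree$ varies over $(\signature, I_\bullet, J_\circ)$-trees need not have their minima arranged as an interval in any obvious way. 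Indeed the paper explicitly records (Conjecture~\ref{conj:interval} and the remark following Theorem~\ref{thm:interval}) that the ``one-vertex-at-a-time'' version of this claim is open in general, in particular when a missing black vertex $i_\bullet$ is followed along the boundary of $\conv(I_\bullet\cup J_\circ\cup\{i_\bullet\})$ by two or more black vertices; so a naive induction by single vertex additions does not go through.

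What the paper actually does is a three-stage reduction (proof of Theorem~\ref{thm:interval}) that is carefully engineered to avoid the hard cases of Conjecture~\ref{conj:interval}. Lemma~\ref{lem:interval2} handles adding back a boundary vertex whose neighbour is of the opposite colour; Lemma~\ref{lem:interval3} handles a ``paired'' addition of $i_\bullet$ and $j_\circ$ together when $(i_\bullet,j_\circ)$ and $(j_\bullet,k_\circ)$ are boundary edges, via an explicit bijection $\psi$; and only after these two stages have restored the pattern $I_\bullet = \llb n_\bullet ] \ssm K_\bullet$, $J_\circ = [n_\circ\rrb \ssm K_\circ$ does Lemma~\ref{lem:interval1} apply Proposition~\ref{prop:facialInterval} directly as you propose. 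So your appeal to Proposition~\ref{prop:facialInterval} works only at the last stage; the two preceding lemmas, whose proofs are the genuinely technical content (choosing the right $\treeMin$ or $\treeMax$ in a sub-interval, and in Lemma~\ref{lem:interval3} the bijection $\psi$), are what your proposal is missing. Your alternative second route (direct construction of joins by first flipping out, in slope order, the edges of $\tree_1\cap\tree_2$ destined to disappear) is not pursued in the paper and, as written, again leaves the technical heart unproven: it is not established that this procedure is choice-independent or that the resulting tree is reachable by increasing flips from both $\tree_1$ and $\tree_2$. In short, the architecture is right but the load-bearing steps are absent.
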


We start by considering the case when~$I_\bullet = \llb n_\bullet ]$ and~$J_\circ = [ n_\circ \rrb$. Recall that two triangulations~$T$ and~$T'$ of~$\polygonS$ are related by an increasing flip if there exist diagonals~$\delta \in T$ and~$\delta' \in T'$ such that~${T \ssm \{\delta\} = T' \ssm \{\delta'\}}$ and the slope of~$\delta$ is smaller than the slope of~$\delta'$. It is known that the transitive closure of the increasing flip graph is a lattice, called the \defn{$\signature$-Cambrian lattice}~\cite{Reading-CambrianLattices}.

\begin{lemma}
\label{lem:CambrianLattice}
The bijection~$\phi$ or Proposition~\ref{prop:bijectionTreesTriangulations} between triangulations of~$\polygonS$ and $\signature$-trees preserves increasing flips. Therefore, the transitive closure of the increasing flip graph on $\signature$-trees is isomorphic to the $\signature$-Cambrian lattice.
\end{lemma}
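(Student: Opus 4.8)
The plan is to check that the bijection~$\phi$ of Proposition~\ref{prop:bijectionTreesTriangulations} not only sends (non-)crossing diagonals to (non-)crossing edges, but also respects the slope comparisons that govern the orientation of flips; once this is done, the second assertion is immediate from the definition of the $\signature$-Cambrian lattice as the transitive closure of the increasing flip graph on triangulations of~$\polygonS$.

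First I would recall that a flip between triangulations of~$\polygonS$ replaces a diagonal~$\delta = (i_\sq, k_\sq)$ (the diagonal of a quadrilateral $i_\sq a_\sq k_\sq b_\sq$ appearing in the triangulation) by the other diagonal~$\delta' = (a_\sq, b_\sq)$, and that under~$\phi$ this corresponds exactly to the flip of Proposition~\ref{prop:flippable} replacing~$(i_\bullet, k_\circ)$ by~$(a_\bullet, b_\circ)$ (or $(b_\bullet, a_\circ)$, according to which endpoints are black and white), where the four sides $(i_\bullet, a_\circ), (a_\bullet, k_\circ), (i_\bullet, b_\circ), (b_\bullet, k_\circ)$ of the corresponding square are precisely the edges guaranteed to lie in both $\phi(T)$ and $\phi(T')$ by Proposition~\ref{prop:flippable}\,(1). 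So the underlying (unoriented) flip graphs are already identified by Proposition~\ref{prop:bijectionTreesTriangulations}; only the orientations remain to be matched.

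The key point is then a purely geometric observation: the map from diagonals of~$\polygonS$ to edges of~$\G$ is \emph{slope-preserving up to the relevant comparison}. More precisely, for any two crossing diagonals $\delta_1, \delta_2$ of~$\polygonS$, the slope of~$\delta_1$ is smaller than the slope of~$\delta_2$ if and only if the slope of~$\phi(\delta_1)$ is smaller than the slope of~$\phi(\delta_2)$. This is because $\phi$ does not move the vertices at all: the square vertices $\llb n_\sq \rrb$ of~$\polygonS$ are, by construction of~$\polygonC$ from~$\polygonS$, exactly the positions occupied (after the black/white splitting, which is an infinitesimal perturbation that does not affect strict slope inequalities between crossing segments) by the vertices $\llb n_\bullet ]$ and $[ n_\circ \rrb$; and the condition $i_\bullet < j_\circ$ imposed on edges of~$\G$ matches the condition $i_\sq < j_\sq$ on diagonals. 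Hence a flip $T \to T'$ in which $\delta$ is replaced by $\delta'$ of larger slope corresponds to a flip $\phi(T) \to \phi(T')$ in which $\phi(\delta)$ is replaced by $\phi(\delta')$ of larger slope, which is increasing in the sense of Lemma~\ref{lem:characterizationIncreasingFlip}\,(1). Applying the same argument to the reverse flip shows $\phi$ and $\phi^{-1}$ both preserve increasing flips, so $\phi$ is an isomorphism of oriented flip graphs; taking transitive closures gives the $\signature$-Cambrian lattice.

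The main obstacle is the bookkeeping around the black/white splitting: one must be careful that replacing each square vertex $i_\sq$ by the infinitesimally separated pair $(i_\bullet, i_\circ)$ does not reverse any strict slope inequality between two crossing diagonals, and that the parity constraint (which endpoint of a flipped edge is black and which is white) is consistent across the flip. This is where I would spend the few lines of actual verification; everything else is formal. Since the paper describes this bijection as ``immediate'', I expect the intended proof to simply note that $\phi$ moves no vertex and hence trivially preserves slopes of crossing segments, with the splitting treated as a harmless perturbation, and then invoke Lemma~\ref{lem:characterizationIncreasingFlip}\,(1).
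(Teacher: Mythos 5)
Your argument is correct and matches what the paper leaves implicit: Lemma~\ref{lem:CambrianLattice} is stated without proof, and the intended (obvious) justification is exactly yours --- $\phi$ fixes vertices up to the infinitesimal black/white splitting, so strict slope inequalities between crossing segments, and hence the orientation of flips via Lemma~\ref{lem:characterizationIncreasingFlip}\,(1), are preserved. One small notational slip: for a quadrilateral with vertices $a < i < b < k$, the sides map under $\phi$ to $(a_\bullet, i_\circ)$, $(i_\bullet, b_\circ)$, $(b_\bullet, k_\circ)$, $(a_\bullet, k_\circ)$, not $(i_\bullet, a_\circ)$ as you wrote (edges of~$\G$ always have the smaller index black), but this does not affect the substance of the argument.
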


In the classical Tamari case when~$\signature = {-}^n$, the $(I_\bullet, J_\circ)$-lattice is isomorphic to the $\nu(I_\bullet, J_\circ)$-Tamari lattice of~\cite{PrevilleRatelleViennot} for some Dyck path $\nu(I_\bullet, J_\circ)$ described in details in~\cite[Sect.~3]{CeballosPadrolSarmiento}. Moreover, it is always an interval of the Tamari lattice. We will prove Theorem~\ref{thm:lattice} via the following generalization of this statement.

\begin{theorem}
\label{thm:interval}
The $(\signature, I_\bullet, J_\circ)$-lattice~$\LatIJ$ is an interval of the $\signature$-Cambrian lattice.
\end{theorem}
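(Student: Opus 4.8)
The plan is to realize the $(\signature, I_\bullet, J_\circ)$-lattice as the image of the $\signature$-Cambrian lattice under a natural projection, and to exhibit that image as the interval between two explicit elements. Concretely, recall from Proposition~\ref{prop:bijectionTreesTriangulations} that $\signature$-trees are exactly the $(\signature, \llb n_\bullet], [n_\circ\rrb)$-trees. There is an obvious restriction map $\rho$ sending a $\signature$-tree $\tree$ to the set of its edges that happen to be edges of $\GIJ$, \ie those $(i_\bullet,j_\circ)\in\tree$ with $i_\bullet\in I_\bullet$ and $j_\circ\in J_\circ$. The first step is to show that $\rho(\tree)$ can be completed to a $(\signature, I_\bullet, J_\circ)$-tree, and more precisely to identify which $(\signature, I_\bullet, J_\circ)$-tree one gets by a canonical completion; the natural candidate is: keep the edges of $\rho(\tree)$ and greedily add the edge of smallest possible slope in each cell, \ie take $\rho(\tree)$ together with the minimal tree $\treeMinIJ$-style completion in each cell of $\rho(\tree)$. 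Using Lemma~\ref{lem:noncrossingmatchings} and the ideas around the construction of $\treeMinIJ$, this completion is well defined, and I would call the resulting map $\projDown$ (a "down-projection" onto the $(\signature, I_\bullet, J_\circ)$-trees).

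The second step is to prove that $\projDown$ is order-preserving and surjective, and that its fibers are intervals of the $\signature$-Cambrian lattice. Order-preservation should follow from the analysis already carried out in the proof of Proposition~\ref{prop:facialInterval}: an increasing flip in a $\signature$-tree either involves two edges both outside $\GIJ$ (so $\projDown$ is unchanged), or it projects to an increasing flip or an equality of $(\signature, I_\bullet, J_\circ)$-trees, since by Proposition~\ref{prop:flippable}\,(1) the "rectangle" edges $(i_\bullet,j'_\circ)$ and $(i'_\bullet,j_\circ)$ are present on both sides and the slope inequality is preserved under restriction. Surjectivity is clear since any $(\signature, I_\bullet, J_\circ)$-tree is the restriction of some $\signature$-tree (complete it arbitrarily to a triangulation of $\polygonS$). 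For the fibers: a fiber of $\projDown$ over a $(\signature, I_\bullet, J_\circ)$-tree $\uT$ consists of all $\signature$-trees whose $\GIJ$-edges are exactly the relevant edges of $\uT$ and which additionally do not contain any $\GIJ$-edge outside $\uT$; I would argue this fiber is the interval of the $\signature$-Cambrian lattice between its (unique, by the $\treeMin$-type argument) minimum and maximum, which amounts to a "non-revisiting" argument in the same spirit as Proposition~\ref{prop:facialInterval}.

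The third step is the standard order-theoretic packaging: an order-preserving surjection $\projDown\colon L\to P$ from a lattice $L$ whose fibers are intervals, and which moreover admits an order-preserving section (a map $\projUp$ with $\projDown\circ\projUp=\mathrm{id}$ picking the minimum of each fiber) exhibits $P$ as a sublattice isomorphic to the subposet of minima; and if in addition the image of this section is itself an interval of $L$ — equivalently, the set of fiber-minima is closed under meet and join in $L$ — then $P$ is an interval of $L$. So the real content is to identify $\projUp(\uT)$, the $\signature$-tree obtained by completing $\uT$ "minimally", and to show that as $\uT$ ranges over all $(\signature, I_\bullet, J_\circ)$-trees, the elements $\projUp(\uT)$ form an interval of the $\signature$-Cambrian lattice, with bottom $\projUp(\treeMinIJ)$ and top $\projUp(\treeMaxIJ)$. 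For this I would show $\projUp$ is a lattice embedding by checking it commutes with increasing flips (an increasing flip of $\uT$ lifts to a sequence of increasing flips of $\projUp(\uT)$ that stays inside the image), which again reduces to the local rectangle analysis.

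The main obstacle I anticipate is exactly the last point: showing that the image of $\projUp$ is an \emph{interval} and not merely a sublattice. Unlike the classical Tamari case $\signature={-}^n$, where the analogous interval is a descent class and the endpoints are transparent, here one must argue that every $\signature$-tree lying between $\projUp(\treeMinIJ)$ and $\projUp(\treeMaxIJ)$ in the $\signature$-Cambrian order is actually of the form $\projUp(\uT)$ — \ie its "minimal completion" property is preserved along increasing flips within that interval. I expect this to require a careful case analysis of how an increasing flip in $\polygonS$ can create a $\GIJ$-edge that violates minimality, ruling it out using the choice of $j_\circ$ in the boundary-edge argument of Proposition~\ref{prop:trees} together with the slope characterization of Lemma~\ref{lem:characterizationIncreasingFlip}. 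The symmetries of Lemma~\ref{lem:reflections} should cut the casework roughly in half.
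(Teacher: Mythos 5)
Your approach is genuinely different from the paper's, and the difference matters. The paper's proof of Theorem~\ref{thm:interval} works ``bottom-up'': it embeds the $(\signature, I_\bullet, J_\circ)$-increasing flip graph as an induced subgraph of a slightly larger $(\signature, I'_\bullet, J'_\circ)$-increasing flip graph, and shows directly that this subgraph is an interval via an explicit sandwich argument (a local minimum $\treeMin$ is constructed so that the subgraph is precisely the cone above it, or below a $\treeMax$). This is done in three special cases --- Lemmas~\ref{lem:interval1}, \ref{lem:interval2} and \ref{lem:interval3}, which handle deleting a $K_\bullet \cup K_\circ$ pair, deleting a vertex whose boundary neighbour is of the opposite colour, and deleting a mismatched black-white pair --- and the general case is reduced to a chain of these by the three-step argument in the proof of Theorem~\ref{thm:interval}. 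Your proposal instead works ``top-down'': project the whole $\signature$-Cambrian lattice onto the $(\signature, I_\bullet, J_\circ)$-trees via a restriction-plus-minimal-completion map $\projDown$, exhibit a section $\projUp$, and argue that the image of $\projUp$ is an interval.

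The gap in your plan is the one you yourself flag, but it is more serious than an anticipated difficulty: it is the entire content of the theorem. Your ``order-theoretic packaging'' step is incorrect as stated. An order-preserving surjection $L\to P$ with interval fibers and a section whose image is closed under $\meet$ and $\join$ shows only that $P$ is isomorphic to a \emph{sublattice} of $L$; a sublattice is not, in general, an interval (e.g. $\{\emptyset,\{1\},\{2,3\},\{1,2,3\}\}$ is a sublattice of the Boolean lattice on three elements but not an interval). So even if you carry out all of steps one and two you have proved only the quotient/sublattice statement, which the paper itself explicitly points out it can prove ``in any case'' (see the remark following the proof of Theorem~\ref{thm:interval}) --- but notes is not enough. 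You would still have to show that every $\signature$-tree lying between $\projUp(\treeMinIJ)$ and $\projUp(\treeMaxIJ)$ is of the form $\projUp(\uT)$, and you give no argument for this beyond ``a careful case analysis''. There is also a secondary issue in step two that would need real work: it is not clear that $\projDown$ is order-preserving, because an increasing flip in a $\signature$-tree that exchanges a $\GIJ$-edge for a non-$\GIJ$-edge (or vice versa) changes the cell decomposition of $\rho(\tree)$, and the minimal completions of $\rho(\tree)$ and $\rho(\tree')$ then need not differ by an increasing flip or an equality --- your rectangle argument only covers the case where all four corners of the flipped square lie in $I_\bullet\cup J_\circ$. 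The paper avoids both of these pitfalls by never leaving the small lattice: each of Lemmas~\ref{lem:interval2} and \ref{lem:interval3} identifies the image inside the larger flip graph as the set of trees containing a fixed forest (hence an interval by Proposition~\ref{prop:facialInterval}) intersected with a down-set or up-set cut out by an explicitly constructed extremal tree, and checks both inclusions directly.
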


In fact, computational experiments indicate the following generalization of Theorem~\ref{thm:interval}.

\begin{conjecture}
\label{conj:interval}
For any~$I_\bullet \subseteq I'_\bullet$ and~$J_\circ \subseteq J'_\circ$, the $(\signature, I_\bullet, J_\circ)$-lattice~$\LatIJ$ is an interval of the $(\signature, I'_\bullet, J'_\circ)$-lattice~$\LatIJ[I'_\bullet][J'_\circ]$.
\end{conjecture}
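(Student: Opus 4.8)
The plan is to reduce everything to the case $I_\bullet = \llb n_\bullet ]$, $J_\circ = [ n_\circ \rrb$ handled by Lemma~\ref{lem:CambrianLattice}, and to exhibit the $(\signature, I_\bullet, J_\circ)$-trees as a sublattice — in fact an interval — of the $\signature$-Cambrian lattice via an explicit completion map. First I would define, for a $(\signature, I_\bullet, J_\circ)$-tree~$\tree$, a canonical $\signature$-tree $\psi(\tree)$ on the full vertex set $\llb n_\bullet ]\cup [ n_\circ \rrb$: run $\tree$ together with the ``missing'' black/white vertices, and for each missing vertex attach it greedily to the unique non-crossing choice forced by a priority rule analogous to the non-crossing matching algorithm of Lemma~\ref{lem:noncrossingmatchings} (top/bottom of the relevant pile according to the sign). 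The key structural claim is then: (i) $\psi$ is injective and its image is exactly the set of $\signature$-trees lying in the interval $[\,\psi(\treeMinIJ),\ \psi(\treeMaxIJ)\,]$ of the $\signature$-Cambrian lattice; and (ii) $\psi$ is a graph isomorphism from $\IFGIJ$ onto the increasing flip graph induced on that interval — i.e., increasing flips correspond to increasing flips, in both directions. Combined with Theorem~\ref{thm:lattice} (that $\IFGIJ$ is a lattice) and the fact that an interval of a lattice is a lattice, this gives the statement; alternatively one deduces Theorem~\ref{thm:lattice} as a corollary, which is the logical order suggested by the surrounding text.

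The main steps, in order, would be: (1) set up $\psi$ precisely and check it lands in the $\signature$-trees — here the non-crossing and maximality properties follow from Proposition~\ref{prop:trees} and the priority-rule argument of Lemma~\ref{lem:noncrossingmatchings}; (2) show $\psi$ is injective by describing the inverse as ``restrict to $\GIJ$'' and verifying that the attached edges of the missing vertices are uniquely reconstructible from the positions of $I_\bullet, J_\circ$ and $\signature$; (3) prove the flip-compatibility: an edge $(i_\bullet,j_\circ)$ is flippable in $\tree$ iff the corresponding edge is flippable in $\psi(\tree)$ with the \emph{same} partner $(i'_\bullet,j'_\circ)$ (using Proposition~\ref{prop:flippable}\,(2) and the fact that the completion edges never obstruct a flip inside a cell of~$\forest$), and that the slope comparison of Lemma~\ref{lem:characterizationIncreasingFlip} is preserved since $\psi$ does not move the vertices of $I_\bullet\cup J_\circ$; (4) identify the image as an interval: show that every $\signature$-tree $S$ with $\psi(\treeMinIJ)\le S\le\psi(\treeMaxIJ)$ in the Cambrian order restricts to a $(\signature, I_\bullet, J_\circ)$-tree, equivalently that along any increasing-flip chain from $\psi(\treeMinIJ)$ the ``completion'' edges are never flipped out — this is where the non-revisiting property of Proposition~\ref{prop:facialInterval} and the definition of $\treeMinIJ, \treeMaxIJ$ (extremal slopes) do the work, since the completion edges are precisely forced to be extremal among the edges in their cells.

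The hard part will be step~(4), the interval identification: showing the image is \emph{convex} in the Cambrian order, not merely a subposet closed under flips within $\GIJ$. One must rule out a detour through $\signature$-trees outside the image — i.e., show that no increasing-flip path between two trees of the form $\psi(\tree),\psi(\tree')$ is forced to flip out a completion edge and then flip it back in. The text explicitly warns that, unlike the classical case $\signature={-}^n$, this interval is not a descent class, so the clean ``it's the set of trees avoiding a fixed set of edges'' argument fails; instead I expect to argue cell-by-cell, using that the completion edges partition $\conv(I_\bullet\cup J_\circ)$ into the cells $C^1,\dots,C^p$ of the minimal/maximal completion and that Proposition~\ref{prop:facialInterval} already gives that ``trees containing a fixed forest'' form an interval — applying it to the forest consisting of all completion edges of $\treeMinIJ$ (which, one checks, coincides with those of $\treeMaxIJ$ when $\tree$ ranges over all $(\signature, I_\bullet, J_\circ)$-trees only if the completion edges are canonical, so some care is needed to pick a completion rule making this forest independent of $\tree$). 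If a single global completion forest cannot be arranged, the fallback is to prove directly, via a monovariant on slopes restricted to $\GIJ$-edges plus the non-revisiting lemma, that the Cambrian interval $[\psi(\treeMinIJ),\psi(\treeMaxIJ)]$ meets $\GIJ$ in exactly $\psi(\IFGIJ)$, and that the induced order agrees with the transitive closure of $\IFGIJ$.
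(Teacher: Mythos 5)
The statement you were given is \emph{Conjecture}~\ref{conj:interval}, which the paper explicitly does \emph{not} prove: the authors write that they ``are not able to prove Conjecture~\ref{conj:interval} in full generality,'' and the later remark pinpoints exactly where the obstruction lies (adding a vertex $i_\bullet$ that is followed by two or more vertices of $I_\bullet$ along the boundary of $\conv(I_\bullet\cup J_\circ\cup\{i_\bullet\})$). Your proposal, headed ``for Theorem~\ref{thm:interval},'' actually addresses only that theorem, i.e.\ the special case $I'_\bullet=\llb n_\bullet]$, $J'_\circ=[n_\circ\rrb$; nothing in your sketch handles an arbitrary intermediate $(\signature,I'_\bullet,J'_\circ)$-lattice, so even if it succeeded it would not establish the conjecture. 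Moreover, the route you take is genuinely different from the paper's: the paper never constructs a global completion map $\psi$, but instead proves Theorem~\ref{thm:interval} by composing three one-step inclusion lemmas (Lemmas~\ref{lem:interval1}, \ref{lem:interval2} and~\ref{lem:interval3}), each of which adds vertices in a controlled pattern chosen precisely to stay inside the cases where the argument goes through, together with Proposition~\ref{prop:facialInterval}.

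The real gap is at your step~(4), and you flag it but do not close it. Convexity of the image of $\psi$ in the Cambrian order is exactly the hard part, and your proposed remedy --- find a fixed forest $\forest_0$ of completion edges so that the image is $\set{\text{$\signature$-trees}}{\forest_0\subseteq\tree}$ and then invoke Proposition~\ref{prop:facialInterval} --- cannot work as stated: when a missing vertex is followed by several missing vertices of the same color, the edges attached to those vertices in $\psi(\tree)$ genuinely depend on $\tree$, so there is no $\tree$-independent $\forest_0$. This is not a technicality; it is exactly the case the paper's remark identifies as the obstacle to Conjecture~\ref{conj:interval}, and the authors, who already have the non-revisiting property in hand, still could not get past it. The ``fallback'' monovariant argument is asserted, not given. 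There is also a smaller unaddressed point in your step~(2): inverting $\psi$ by ``restrict to $\GIJ$'' presupposes that every missing vertex is a leaf of $\psi(\tree)$, equivalently that no completion edge joins two missing vertices; since the number of completion edges equals the number of missing vertices, this needs a short argument (otherwise some missing vertex has degree at least~$2$ and the restriction to $\GIJ$ is a proper forest, not a spanning tree). As written, the proposal reproduces the difficulty the paper already identifies rather than resolving it, and it is not a proof of either the conjecture or of Theorem~\ref{thm:interval}.
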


Although we are not able to prove Conjecture~\ref{conj:interval} in full generality, we will prove Theorem~\ref{thm:interval} using the following three special cases of Conjecture~\ref{conj:interval}.

\begin{lemma}
\label{lem:interval1}
For any~$K \subseteq [n]$, the $(\signature, \llb n_\bullet ] \ssm K_\bullet, [ n_\circ \rrb \ssm K_\circ)$-lattice is an interval of the $\signature$-Cambrian lattice.
\end{lemma}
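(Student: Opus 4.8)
The plan is to realize the $(\signature, \llb n_\bullet ] \ssm K_\bullet, [ n_\circ \rrb \ssm K_\circ)$-increasing flip graph as the set of $\signature$-trees containing a suitable $\signature$-forest, and then invoke Proposition~\ref{prop:facialInterval}. The underlying picture is that restricting the black vertices to $\llb n_\bullet ] \ssm K_\bullet$ and the white vertices to $[ n_\circ \rrb \ssm K_\circ$ amounts, through the bijection $\phi$ of Proposition~\ref{prop:bijectionTreesTriangulations}, to deleting the square vertices $\{i_\sq : i\in K\}$ from $\polygonS$; so one expects the copy of $\polygonS \ssm \{i_\sq : i\in K\}$ to appear as one cell of an appropriate $\signature$-forest inside $\polygonC$, with all other cells trivial.

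The main subtlety is the choice of this forest: it must be built from the ``caps'' that are cut off when one deletes $\{i_\sq : i\in K\}$ from $\polygonS$, and these caps are indexed by the maximal runs of $K$-vertices \emph{that are consecutive along the boundary of $\polygonS$}, not by the runs of consecutive integers in $K$ --- indeed $\signature_i$ may alternate within an integer run, scattering the corresponding vertices between the two chains of $\polygonS$. Each boundary run of $K$-vertices lies entirely among the positive vertices or entirely among the negative ones; together with its two boundary neighbours in $\llb n\rrb \ssm K$ (reading $0$ and $n+1$ as the extreme anchors, which is legitimate since $K\subseteq[n]$) it spans a subpolygon $R$ of $\polygonS$ with a closing diagonal $\delta_R$. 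I would choose an arbitrary triangulation $\rho_R$ of each cap $R$ and set $F \eqdef \phi\big(\bigcup_R \rho_R\big)$ (which contains every $\delta_R$); as distinct caps live in subpolygons with disjoint interiors, $F$ is a $(\signature, \llb n_\bullet ], [ n_\circ \rrb)$-forest.

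I would then apply Proposition~\ref{prop:facialInterval} with $I_\bullet = \llb n_\bullet ]$ and $J_\circ = [ n_\circ \rrb$ to the forest $F$, and identify its cells in $\polygonC$. Each triangle on square vertices $a_\sq, b_\sq, c_\sq$ with $a<b<c$ of some $\rho_R$ gives a four-vertex cell of $F$ carrying the split parts $a_\bullet, b_\circ, b_\bullet, c_\circ$, inside which $\GIJ$ has exactly three non-crossing edges --- three of the four sides of this quadrilateral, the fourth side and both its diagonals joining two blacks or two whites and so lying outside $\GIJ$ --- hence a unique tree, i.e.\ a trivial flip graph. The remaining cell $C$ is the copy of $\polygonS \ssm \{i_\sq : i\in K\}$ in $\polygonC$: each $\delta_R$ is a boundary edge of $C$, so the $\delta_R$ do not subdivide it, and one checks that passing to the caps removes from $\polygonC$ exactly the pairs $\{i_\circ, i_\bullet\}$ for $i\in K$, so that $C$ carries exactly the black vertices $\llb n_\bullet ] \ssm K_\bullet$ and the white vertices $[ n_\circ \rrb \ssm K_\circ$. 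Thus Proposition~\ref{prop:facialInterval} shows that the $\signature$-trees containing $F$ form an interval of the $\signature$-increasing flip graph, isomorphic to the product of the increasing flip graphs of the cells of $F$, which collapses to the $(\signature, \llb n_\bullet ] \ssm K_\bullet, [ n_\circ \rrb \ssm K_\circ)$-increasing flip graph.

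Finally, since the $\signature$-increasing flip graph is the Hasse diagram of the $\signature$-Cambrian lattice (Lemma~\ref{lem:CambrianLattice}), an interval of the former is an interval of the latter, and the lemma follows (in particular this reproves that the left-hand side is a lattice). Apart from the two cell computations above, the only real work is getting the combinatorial model of the caps right, as flagged; once that is in place everything is routine, and pleasantly the slope comparison defining ``increasing'' never has to be revisited, since Proposition~\ref{prop:facialInterval} is phrased directly in terms of increasing flip graphs.
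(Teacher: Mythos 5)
Your proof is correct and follows the same route as the paper's: exhibit a $\signature$-forest whose cells decompose $\polygonC$ into the restricted polygon together with small pieces carrying a unique tree, and then invoke Proposition~\ref{prop:facialInterval} together with Lemma~\ref{lem:CambrianLattice}. The paper's family~$\{\delta(k):k\in K\}$ is one explicit choice of such a forest --- under~$\phi^{-1}$ it amounts to the fan triangulation of each of your caps from one of its anchors --- whereas you allow an arbitrary triangulation of the caps and spell out the resulting cell decomposition, but the substance is identical.
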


\begin{proof}
For any vertex~$k \in [n]$, let~$\delta(k)$ be the edge of~$\G$ joining the vertex of~$\llb n_\bullet ]$ preceding~$k$ to the vertex of~$[ n_\circ \rrb \ssm K_\circ$ following~$k$ on the boundary of~$\polygonC$. The ${(\signature, \llb n_\bullet ] \ssm K_\bullet, [ n_\circ \rrb \ssm K_\circ)}$-increasing flip graph is clearly isomorphic to the subgraph of the $(\signature, \llb n_\bullet ], [ n_\circ \rrb)$-increasing flip graph induced by the $(\signature, \llb n_\bullet ], [ n_\circ \rrb)$-trees containing~$\set{\delta(k)}{k \in K}$. It is therefore an interval of the $\signature$-Cambrian lattice by Proposition~\ref{prop:facialInterval} and Lemma~\ref{lem:CambrianLattice}.
\end{proof}

\begin{lemma}
\label{lem:interval2}
For any boundary edge~$(i_\bullet, j_\circ)$ of~$\conv(I_\bullet \cup J_\circ)$ with~$i_\bullet \ne \min(I_\bullet)$ (resp.~${j_\circ \ne \max(J_\circ)}$), the $(\signature, I_\bullet \ssm \{i_\bullet\}, J_\circ)$-lattice (resp.~$(\signature, I_\bullet, J_\circ \ssm \{j_\circ\})$-lattice) is an interval of the $(\signature, I_\bullet, J_\circ)$-lattice.
\end{lemma}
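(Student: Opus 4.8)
The plan is to identify $\LatIJ[I_\bullet\ssm\{i_\bullet\}][J_\circ]$ with the set of $(\signature,I_\bullet,J_\circ)$-trees in which $i_\bullet$ is a leaf, and then to prove that this set is an interval of $\LatIJ$. By the horizontal and vertical reflections of Lemma~\ref{lem:reflections} it is enough to treat the black case $i_\bullet\neq\min(I_\bullet)$; the white case $j_\circ\neq\max(J_\circ)$ is symmetric.

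First I would record a local fact: among the edges of $\GIJ$ incident to $i_\bullet$, the edge $(i_\bullet,j_\circ)$ is the only irrelevant one. Indeed it is a boundary edge of $\conv(I_\bullet\cup J_\circ)$, hence crosses nothing, whereas for any other $j'_\circ\in J_\circ$ with $i_\bullet<j'_\circ$ the four vertices $i_\bullet$, $j_\circ$, $j'_\circ$, $\min(I_\bullet)$ appear in this cyclic order on the boundary of $\conv(I_\bullet\cup J_\circ)$ (because $j_\circ$ is the boundary neighbour of $i_\bullet$ of larger index and $\min(I_\bullet)\neq i_\bullet$ lies on the other side), so $(i_\bullet,j'_\circ)$ is crossed by the edge $(\min(I_\bullet),j_\circ)$ of $\GIJ$. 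Consequently $\psi\colon\tree_0\mapsto\tree_0\cup\{(i_\bullet,j_\circ)\}$ maps $(\signature,I_\bullet\ssm\{i_\bullet\},J_\circ)$-trees bijectively onto the set~$\mathcal{L}$ of $(\signature,I_\bullet,J_\circ)$-trees in which $i_\bullet$ is a leaf, its inverse being removal of the unique edge at $i_\bullet$ (well-definedness in both directions is a routine check of the edge count, of maximality, and of the fact that $(i_\bullet,j_\circ)$ crosses no edge). Moreover $\psi$ preserves increasing flips, since no flip inside $\mathcal{L}$ can involve an edge at $i_\bullet$ other than the irrelevant, hence unflippable, edge $(i_\bullet,j_\circ)$, and slopes are left unchanged. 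Therefore $\IFGIJ[I_\bullet\ssm\{i_\bullet\}][J_\circ]$ is isomorphic (as an oriented graph) to the subgraph of $\IFGIJ$ induced by $\mathcal{L}$; in particular, by the proposition stating that these increasing flip graphs are acyclic with a unique source and a unique sink, $\mathcal{L}$ has a minimum $a\eqdef\psi(\treeMinIJ[I_\bullet\ssm\{i_\bullet\}][J_\circ])$ and a maximum $b\eqdef\psi(\treeMaxIJ[I_\bullet\ssm\{i_\bullet\}][J_\circ])$, and $\mathcal{L}\subseteq[a,b]$ in $\LatIJ$.

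The crux is the reverse inclusion $[a,b]\subseteq\mathcal{L}$: any $(\signature,I_\bullet,J_\circ)$-tree $\tree$ with $a\leq\tree\leq b$ must have $i_\bullet$ as a leaf. Suppose not, so that some edge $\delta\eqdef(i_\bullet,j'_\circ)$ with $j'_\circ\neq j_\circ$ belongs to $\tree$; then, $\tree$ being non-crossing, no edge of $\tree$ crosses $\delta$. On the other hand $\delta$ is crossed by the edge $(\min(I_\bullet),j_\circ)$ of $\GIJ[I_\bullet\ssm\{i_\bullet\}]$, and I would then run the boot-strapping argument from the proof that $\treeMinIJ$ and $\treeMaxIJ$ are $(\signature,I_\bullet,J_\circ)$-trees, but carried out inside $\GIJ[I_\bullet\ssm\{i_\bullet\}]$ and relative to the fixed edge $\delta$: according to whether the slope of $(\min(I_\bullet),j_\circ)$ exceeds that of $\delta$ or not, this should produce an edge of $\treeMinIJ[I_\bullet\ssm\{i_\bullet\}][J_\circ]\subseteq a$ crossing $\delta$ with strictly larger slope, respectively an edge of $\treeMaxIJ[I_\bullet\ssm\{i_\bullet\}][J_\circ]\subseteq b$ crossing $\delta$ with strictly smaller slope. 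In the first case, the monotonicity established in the proof of Proposition~\ref{prop:facialInterval} — the property ``$\delta$ crosses an edge of the current tree of larger slope'' is preserved along every increasing flip — shows that $\delta$ is crossed by an edge of larger slope of every tree $\geq a$, hence of $\tree$, a contradiction; the second case is dual, using $\tree\leq b$ and decreasing flips. Thus $i_\bullet$ is a leaf of $\tree$, so $[a,b]\subseteq\mathcal{L}$.

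Combining the two inclusions, $\psi$ is an isomorphism from $\LatIJ[I_\bullet\ssm\{i_\bullet\}][J_\circ]$ onto the interval $[a,b]$ of $\LatIJ$, and the white-vertex statement follows from the vertical reflection of Lemma~\ref{lem:reflections}. I expect the genuine difficulty to be exactly the boot-strapping step in the previous paragraph: starting from the single crossing of $\delta$ by $(\min(I_\bullet),j_\circ)$, one has to reach a crossing of $\delta$ by an edge of $\treeMinIJ[I_\bullet\ssm\{i_\bullet\}][J_\circ]$ of larger slope or of $\treeMaxIJ[I_\bullet\ssm\{i_\bullet\}][J_\circ]$ of smaller slope, keeping the slope comparison under control through the recombinations of crossing edges. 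This is precisely where the Cambrian setting is more delicate than the classical case $\signature={-}^n$, in which $\mathcal{L}$ is cut out by a single forest and Proposition~\ref{prop:facialInterval} applies directly.
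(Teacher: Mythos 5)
Your reduction to the black case and your identification of the target set are sound: the local fact that $(i_\bullet,j_\circ)$ is the unique irrelevant edge of $\GIJ$ at $i_\bullet$, the bijection $\psi$, and the isomorphism of $\IFGIJ[I_\bullet\ssm\{i_\bullet\}][J_\circ]$ with the induced subgraph on $\mathcal L$ (hence $\mathcal L\subseteq[a,b]$) are all correct. The problem is precisely where you flag it: the inclusion $[a,b]\subseteq\mathcal L$. Your proposed bootstrap does not control the slope comparison it needs. The argument in the lemma that $\treeMinIJ$ is a tree shows that the \emph{minimal-slope} edge of $\GIJ[I_\bullet\ssm\{i_\bullet\}][J_\circ]$ crossing $\delta$ lies in $\treeMinIJ[I_\bullet\ssm\{i_\bullet\}][J_\circ]$; it does not show that this edge has slope larger than $\delta$. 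Each recombination step replaces a crossing edge by one of strictly smaller slope, so even if you start from $(\min(I_\bullet),j_\circ)$ with slope greater than that of $\delta$, the slope can drop below that of $\delta$ before you land in $\treeMinIJ[I_\bullet\ssm\{i_\bullet\}][J_\circ]$. Symmetrically for $\treeMaxIJ[I_\bullet\ssm\{i_\bullet\}][J_\circ]$. In the ``mixed'' situation where the minimal-slope crossing edge of $\delta$ has slope below that of $\delta$ while the maximal-slope one has slope above, neither the upward monotonicity from $a$ nor the downward monotonicity from $b$ gives a contradiction, and your case split on the single edge $(\min(I_\bullet),j_\circ)$ does not rule this out. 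So as written the crux step is a genuine gap, not a routine verification.

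The paper avoids this slope bookkeeping entirely. Working in the mirror case (removing $j_\circ\ne\max(J_\circ)$), it observes that ``$j_\circ$ is a leaf'' is equivalent to ``the tree contains some edge $(i_\bullet,k_\circ)$ with $k_\circ>j_\circ$'', and this latter property is \emph{upward closed} along increasing flips by Proposition~\ref{prop:flippable}(1) — no slope comparison against a fixed $\delta$ is needed. It then picks $\treeMin$ to be the minimal tree containing the boundary edge $(i_\bullet,\ell_\circ)$ (with $\ell_\circ$ the next boundary vertex after $j_\circ$), which exists by Proposition~\ref{prop:facialInterval}, and proves the target set equals the up-set of $\treeMin$; the reverse inclusion is handled by a half-space restriction: cutting the tree along $(i_\bullet,k_\circ)$, reducing the part on the $\ell_\circ$ side to the minimal tree of the corresponding sub-instance, and then descending to $\treeMin$. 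To repair your proof you would essentially need to replace your slope dichotomy by this monotone characterization and the half-space descent, which is a different argument from the bootstrap you sketch.
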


\begin{proof}
By Lemma~\ref{lem:reflections}, we focus on the case where~$j_\circ$ is distinct from~$\max(J_\circ)$ and lies on the lower hull of~$\conv(I_\bullet \cup J_\circ)$.
The $(\signature, I_\bullet, J_\circ \ssm \{j_\circ\})$-increasing flip graph is clearly isomorphic to the subgraph of the $(\signature, I_\bullet, J_\circ)$-increasing flip graph induced by $(\signature, I_\bullet, J_\circ)$-trees with a leaf at~$j_\circ$, or equivalently with an edge~$(i_\bullet, k_\circ)$ with~$k_\circ > j_\circ$. Let~$\ell_\circ$ be the vertex of~$J_\circ$ following~$j_\circ$ along the boundary of~$\conv(I_\bullet \cup J_\circ)$ (which exists since~$j_\circ \ne \max(J_\circ)$). Let~$\treeMin$ denote the minimal $(\signature, I_\bullet, J_\circ)$-tree containing~$(i_\bullet, \ell_\circ)$ (which exists by Proposition~\ref{prop:facialInterval}). We claim that the set of $(\signature, I_\bullet, J_\circ)$-trees containing an edge~$(i_\bullet, k_\circ)$ with~$k_\circ > j_\circ$ is precisely the interval above~$\treeMin$ in the $(\signature, I_\bullet, J_\circ)$-increasing flip graph. We proceed in two steps, showing both inclusions:
\begin{itemize}
\item Observe first that any~$(\signature, I_\bullet, J_\circ)$-tree below~$\treeMin$ contains an edge~$(i_\bullet, k_\circ)$ with~$k_\circ > j_\circ$. Indeed, this property holds for~$\treeMin$ (as it contains the edge~$(i_\bullet, \ell_\circ)$), and it is preserved by a increasing flip (using Proposition~\ref{prop:flippable}\,(1)). 
\item Conversely, consider a $(\signature, I_\bullet, J_\circ)$-tree~$\tree$ containing an edge~$(i_\bullet, k_\circ)$ with~$k_\circ > j_\circ$. Let~$X$ be the half space bounded by~$(i_\bullet, k_\circ)$ containing~$\ell_\circ$, and consider~${\bar I_\bullet \eqdef I_\bullet \cap X}$, ${\bar J_\circ = J_\circ \cap X}$, and~${\bar \tree = \tree \cap X}$. Note that the minimal $(\signature, \bar I_\bullet, \bar J_\circ)$-tree~$\treeMinIJ[\bar I_\bullet][\bar J_\circ]$ contains~$(i_\bullet, \ell_\circ)$. Therefore, using a sequence of decreasing flips from~$\bar \tree$ to~$\treeMinIJ[\bar I_\bullet][\bar J_\circ]$, we can transform~$\tree$ into a \mbox{$(\signature, I_\bullet, J_\circ)$-tree~$\tree'$} containing~$(i_\bullet, \ell_\circ)$. Finally, there is a sequence of decreasing flips from~$\tree'$ to~$\treeMin$ since~$\treeMin$ is the minimal $(\signature, I_\bullet, J_\circ)$-tree containing~$(i_\bullet, \ell_\circ)$. \qedhere
\end{itemize}
\end{proof}

\begin{lemma}
\label{lem:interval3}
For any~$i < j < k$ such that~$(i_\bullet, j_\circ)$ and~$(j_\bullet, k_\circ)$ are boundary edges of~$\conv(I_\bullet \cup J_\circ)$, the $(\signature, I_\bullet \ssm \{i_\bullet\}, J_\circ \ssm \{j_\circ\})$- and $(\signature, I_\bullet \ssm \{j_\bullet\}, J_\circ \ssm \{k_\circ\})$-lattices are intervals of the $(\signature, I_\bullet, J_\circ)$-lattice.
\end{lemma}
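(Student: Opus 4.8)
The plan is to adapt the method of Lemma~\ref{lem:interval2}. First, by the vertical reflection of Lemma~\ref{lem:reflections} the two assertions are equivalent: that reflection sends the configuration $i<j<k$ with boundary edges $(i_\bullet,j_\circ),(j_\bullet,k_\circ)$ to an analogous configuration, interchanges the removal of $\{i_\bullet,j_\circ\}$ with that of $\{j_\bullet,k_\circ\}$, and turns intervals into intervals of the opposite lattice. So it suffices to prove that $\LatIJ[I_\bullet\ssm\{j_\bullet\}][J_\circ\ssm\{k_\circ\}]$ is an interval of $\LatIJ$. Using also the horizontal reflection of Lemma~\ref{lem:reflections}, I may assume that $i_\bullet,j_\circ,j_\bullet,k_\circ$ occur consecutively in this order along the lower hull of $\conv(I_\bullet\cup J_\circ)$; then $(i_\bullet,j_\circ)$ and $(j_\bullet,k_\circ)$ are irrelevant, hence belong to every $(\signature,I_\bullet,J_\circ)$-tree, and the presence of $i_\bullet\in I_\bullet$ with $i<j$ forces $j_\bullet\ne\min(I_\bullet)$. (If $(I_\bullet\ssm\{j_\bullet\},J_\circ\ssm\{k_\circ\})$ violates the standing assumption, then $\complexIJ[I_\bullet\ssm\{j_\bullet\}][J_\circ\ssm\{k_\circ\}]$ has no facet and there is nothing to prove, so I assume it holds.)

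Next I would realize $\IFGIJ[I_\bullet\ssm\{j_\bullet\}][J_\circ\ssm\{k_\circ\}]$ as an induced subgraph of $\IFGIJ$. Deleting $j_\bullet$, $k_\circ$ and their incident edges from a $(\signature,I_\bullet,J_\circ)$-tree $\tree$ yields a $(\signature,I_\bullet\ssm\{j_\bullet\},J_\circ\ssm\{k_\circ\})$-tree exactly when $\deg_\tree(j_\bullet)+\deg_\tree(k_\circ)=3$, which (since $(j_\bullet,k_\circ)\in\tree$) means $k_\circ$ is a leaf and $\deg_\tree(j_\bullet)=2$, or $j_\bullet$ is a leaf and $\deg_\tree(k_\circ)=2$. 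Conversely, lifting a $(\signature,I_\bullet\ssm\{j_\bullet\},J_\circ\ssm\{k_\circ\})$-tree $\tau$ amounts to adjoining $j_\bullet,k_\circ$, the edge $(j_\bullet,k_\circ)$, and one further edge at $j_\bullet$ or $k_\circ$; a short count (two distinct non-crossing further edges at $j_\bullet$ would produce a non-crossing subgraph with $|I_\bullet|+|J_\circ|$ edges, contradicting acyclicity) shows there is a unique white vertex $b^\star_\circ$, independent of $\tau$, such that $(j_\bullet,b^\star_\circ)$ crosses no edge of $\GIJ[I_\bullet\ssm\{j_\bullet\}][J_\circ\ssm\{k_\circ\}]$, and that $\tau\mapsto\tau\cup\{(j_\bullet,k_\circ),(j_\bullet,b^\star_\circ)\}$ is a bijection onto the set $\cS$ of $(\signature,I_\bullet,J_\circ)$-trees in which $k_\circ$ is a leaf and $j_\bullet$ has exactly the two neighbours $k_\circ$ and $b^\star_\circ$. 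Inside $\cS$ the edges $(j_\bullet,k_\circ)$ and $(j_\bullet,b^\star_\circ)$ are never flipped — $(j_\bullet,k_\circ)$ is irrelevant; flipping $(j_\bullet,b^\star_\circ)$ (when possible) is a decreasing flip by Lemma~\ref{lem:characterizationIncreasingFlip}, since $k_\circ$ lies below the line $(j_\bullet,b^\star_\circ)$; and $k_\circ$ carries no other flippable edge — so the flips of $\IFGIJ$ restricted to $\cS$ are precisely those of $\IFGIJ[I_\bullet\ssm\{j_\bullet\}][J_\circ\ssm\{k_\circ\}]$. Hence $\cS$ with the induced flips is isomorphic to $\IFGIJ[I_\bullet\ssm\{j_\bullet\}][J_\circ\ssm\{k_\circ\}]$, a lattice by Theorem~\ref{thm:lattice}.

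It remains to show that $\cS$ is an interval of $\IFGIJ$. Let $\tree^\star$ be the minimum of $\cS$ (it exists by the previous paragraph). Every tree of $\cS$ contains $(j_\bullet,b^\star_\circ)$, hence lies in the interval of trees containing $(j_\bullet,b^\star_\circ)$ (Proposition~\ref{prop:facialInterval}), so it lies between $\tree^\star$ and $\max\cS$ in $\IFGIJ$. For the converse inclusion I would argue as in Lemma~\ref{lem:interval2}: first, that $\tree^\star\in\cS$, i.e.\ that in the minimal tree containing $(j_\bullet,b^\star_\circ)$ the vertices $k_\circ$ and $j_\bullet$ already have minimal degree — any further edge at $k_\circ$, or a third edge at $j_\bullet$, admits a decreasing flip, by the computation used to describe $\treeMinIJ$ applied inside the cell cut off by $(j_\bullet,b^\star_\circ)$; and second, that the two conditions defining $\cS$ are preserved under increasing flips applied to a tree of $\cS$ — an increasing flip cannot create a new edge at $k_\circ$ nor a third edge at $j_\bullet$, since such an edge would have to cross $(j_\bullet,k_\circ)$ or $(j_\bullet,b^\star_\circ)$, using that $i_\bullet,j_\circ,j_\bullet,k_\circ$ are consecutive on the hull. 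Combining the two, every tree between $\tree^\star$ and $\max\cS$ lies in $\cS$, so $\cS=[\tree^\star,\max\cS]$ is an interval of $\IFGIJ$, and under the isomorphism above it is the $(\signature,I_\bullet\ssm\{j_\bullet\},J_\circ\ssm\{k_\circ\})$-lattice realized as an interval of the $(\signature,I_\bullet,J_\circ)$-lattice.

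The step I expect to be the main obstacle is the second half of the last paragraph — showing that an increasing flip can introduce neither a new edge at $k_\circ$ nor a third edge at $j_\bullet$. This is the exact analogue of the delicate direction in the proof of Lemma~\ref{lem:interval2}, and it is here that the hypothesis that $(i_\bullet,j_\circ)$ and $(j_\bullet,k_\circ)$ are \emph{consecutive} boundary edges is genuinely used (through the irrelevance of $(j_\bullet,k_\circ)$ and the location of $b^\star_\circ$); a secondary point to be careful about is the degenerate subcase in which $(j_\bullet,b^\star_\circ)$ itself is irrelevant, where $\cS$ is just the set of trees with $k_\circ$ a leaf and the argument reduces verbatim to that of Lemma~\ref{lem:interval2}.
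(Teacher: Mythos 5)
The central step of your argument does not go through: there is in general no white vertex $b^\star_\circ$, independent of $\tau$, such that $(j_\bullet,b^\star_\circ)$ crosses no edge of $\GIJ[I_\bullet\ssm\{j_\bullet\}][J_\circ\ssm\{k_\circ\}]$. Take $\signature={-}{-}{-}{-}$, $I_\bullet=\llb 4_\bullet]$, $J_\circ=[4_\circ\rrb$, and $i=1$, $j=2$, $k=3$, so that $I_\bullet\ssm\{j_\bullet\}=\{0_\bullet,1_\bullet,3_\bullet,4_\bullet\}$ and $J_\circ\ssm\{k_\circ\}=\{1_\circ,2_\circ,4_\circ,5_\circ\}$. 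The only candidates for $b^\star_\circ$ are $4_\circ$ and $5_\circ$; but $(2_\bullet,4_\circ)$ crosses $(3_\bullet,5_\circ)$, and $(2_\bullet,5_\circ)$ crosses $(0_\bullet,4_\circ)$ and $(1_\bullet,4_\circ)$, so neither works. Concretely, a small tree containing $(3_\bullet,5_\circ)$ lifts by adding $(2_\bullet,5_\circ)$, while one containing $(1_\bullet,4_\circ)$ lifts by adding $(2_\bullet,4_\circ)$: the second edge at $j_\bullet$ genuinely depends on $\tau$. (The cardinality argument you give only rules out two non-crossing further edges \emph{both at} $j_\bullet$ for a fixed $\tau$; it does not produce a $\tau$-independent vertex, and indeed cannot, because the obstruction is a $k_\circ$-edge, which can cross a $j_\bullet$-edge.) Once $b^\star_\circ$ fails to be universal, the rest of your interval argument collapses: you can no longer describe $\cS$ as the set of trees containing the single edge $(j_\bullet,b^\star_\circ)$, so Proposition~\ref{prop:facialInterval} does not apply, and the ``preserved under increasing flips'' check also loses its anchor edge.

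The difficulty you hit is precisely what makes the paper's proof more involved. The paper does not delete and re-lift by a fixed edge. It deletes $\{i_\bullet,j_\circ\}$ (the reflected case of yours) and uses a \emph{contraction}: a small tree $\tree$ is sent to the big tree obtained by replacing every edge $(j_\bullet,\ell_\circ)$ by $(i_\bullet,\ell_\circ)$ and then adjoining the two boundary edges $(i_\bullet,j_\circ)$ and $(j_\bullet,k_\circ)$. The image is then characterized not by a single fixed extra edge, but by the two conditions ``contains the diagonal $(i_\bullet,k_\circ)$'' and ``contains \emph{some} edge of the set $E$ of edges $(p_\bullet,q_\circ)$ with $p_\bullet<i_\bullet$ and $q_\circ>j_\circ$.'' The first condition carves out an interval by Proposition~\ref{prop:facialInterval}; the second, a disjunctive condition over the set $E$, is handled by choosing the edge of maximal slope in $E$ and a two-way inclusion argument as in Lemma~\ref{lem:interval2}. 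This built-in flexibility (``an edge of $E$,'' rather than a fixed edge) is exactly what your $\tau$-dependent $b^\star_\circ$ was missing. So the deletion/re-lift picture you set up is fine as a bijection of sets, but to carve out the interval you would need to replace the single-edge condition by a disjunctive one of the paper's type, at which point you essentially recover the paper's argument.
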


\begin{proof}
By Lemma~\ref{lem:reflections}, we can focus on the $(\signature, I_\bullet \ssm \{i_\bullet\}, J_\circ \ssm \{j_\circ\})$-lattice and on the case where~$(i_\bullet, j_\circ)$ and~$(j_\bullet, k_\circ)$ are lower edges of~$\conv({I_\bullet \cup J_\circ})$. The result is also immediate if~$i_\bullet = \min(I_\bullet)$, so we assume otherwise. Let~$E$ be the set of edges of~$\GIJ$ of the form~$(p_\bullet, q_\circ)$ with~$p_\bullet < i_\bullet$ and~$q_\circ > j_\circ$. Let~$(p_\bullet, q_\circ)$ be the edge of maximal slope in~$E$. 

Let~$\cI$ be the interval (by Proposition~\ref{prop:facialInterval}) of the $(\signature, I_\bullet, J_\circ)$-increasing flip graph induced by the $(\signature, I_\bullet, J_\circ)$-trees containing~$(i_\bullet, k_\circ)$. Let~$\treeMax$ be the maximal $(\signature, I_\bullet, J_\circ)$-tree of~$\cI$ containing the edge~$(p_\bullet, q_\circ)$ (which exists by Proposition~\ref{prop:facialInterval}). We claim that the interval below~$\treeMax$ in~$\cI$ is precisely the set of $(\signature, I_\bullet, J_\circ)$-trees containing~$(i_\bullet, k_\circ)$ and an edge of~$E$. The proof of this claim, similar to that of Lemma~\ref{lem:interval2} (showing both inclusions), is left to the reader.

Finally, we observe that there is a bijection~$\psi$ between the $(\signature, I_\bullet \ssm \{i_\bullet\}, J_\circ \ssm \{j_\circ\})$-trees and the $(\signature, I_\bullet, J_\circ)$-trees containing~$(i_\bullet, k_\circ)$ and an edge of~$E$. Namely, a \mbox{$(\signature, I_\bullet \ssm \{i_\bullet\}, J_\circ \ssm \{j_\circ\})$-tree~$\tree$} is sent to the $(\signature, I_\bullet, J_\circ)$-tree~$\phi(\tree)$ obtained from~$\tree$ by replacing each edge of the form~$(j_\bullet, \ell_\circ)$ by the edge~$(i_\bullet, \ell_\circ)$, and finally adding the edges~$(i_\bullet, j_\circ)$ and~$(j_\bullet, k_\circ)$. See \fref{fig:bijectionInterval3}. This bijection clearly preserves increasing flips, which concludes the proof.
\begin{figure}[t]
	\capstart
	\centerline{\includegraphics[scale=.9]{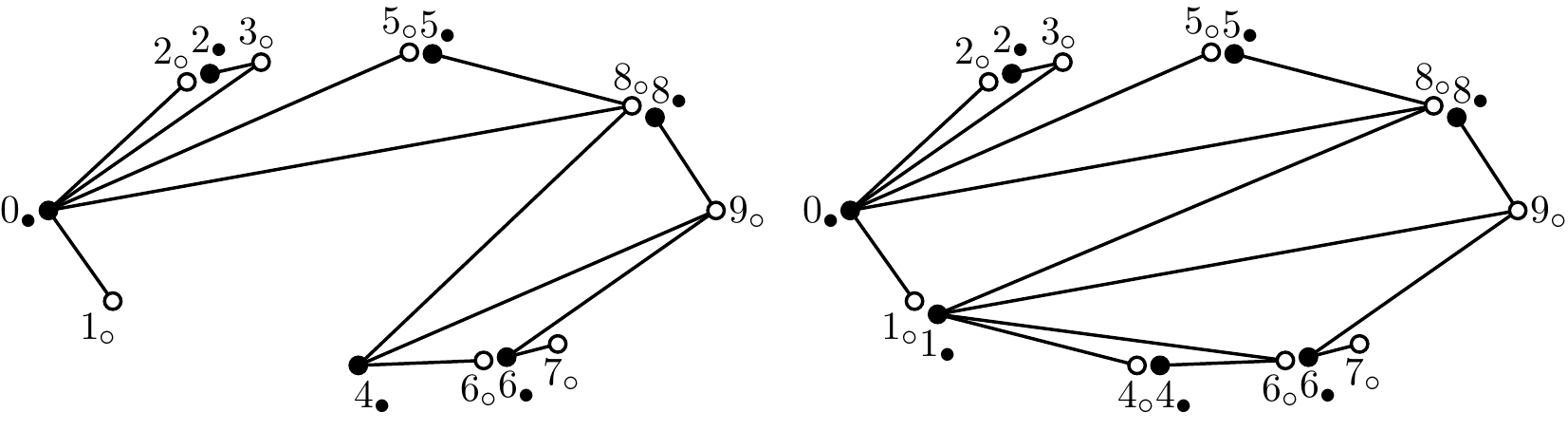}}
	\caption{The bijection of the proof of Lemma~\ref{lem:interval3}. Here, ${\signature = {-}{+}{+}{-}{+}{-}{-}{+}}$, ${I_\bullet = \llb 8_\bullet ] \ssm \{1_\bullet, 7_\bullet\}}$, $J_\circ = [ 8_\circ \rrb \ssm \{4_\circ\}$, $i = 1$, $j = 4$, and~$k = 6$.}
	\label{fig:bijectionInterval3}
\end{figure}
\end{proof}

\begin{proof}[Proof of Theorem~\ref{thm:interval}]
Consider two subsets~$I_\bullet \subseteq \llb n_\bullet ]$ and~$J_\circ \subseteq [ n_\bullet \rrb$ such that~$\min(I_\bullet) < \min(J_\circ)$ and~$\max(I_\bullet) < \max(J_\circ)$. We proceed in three steps:
\begin{enumerate}
\item Let~$I'_\bullet$ be the set of points~$i_\bullet \notin I_\bullet$ such that~$i_\circ$ and the next vertex along~$\conv(I_\bullet \cup J_\circ)$ belong to~$J_\circ$. Similarly, let~$J'_\bullet$ be the set of points~$j_\circ \notin J_\circ$ such that~$j_\bullet$ and the previous vertex along~$\conv(I_\bullet \cup J_\circ)$ belong to~$I_\bullet$. By multiple applications of Lemma~\ref{lem:interval2}, we obtain that the $(\signature, I_\bullet, J_\circ)$-lattice is an interval of the $(\signature, I_\bullet \cup I'_\bullet, J_\circ \cup J'_\circ)$-lattice.
\item After the first step, we can assume that the black and white vertices are alternating along~$\conv(I_\bullet \cup J_\circ)$. Consider now the set~$E$ of edges~$(i_\bullet, j_\circ)$ with~$i_\bullet \notin I_\bullet$ and~$j_\circ \notin J_\circ$, but such that~$(i_\circ, j_\bullet)$ is a boundary edge of~$\conv(I_\bullet \cup J_\circ)$. Note that each such edge~$(i_\bullet, j_\circ)$ is followed a boundary edge~$(j_\bullet, k_\circ)$ of~$\conv(I_\bullet \cup J_\circ)$. Let~$I'_\bullet$ and $J'_\circ$ be the sets of left and right endpoints of the edges of~$E$. By multiple applications of Lemma~\ref{lem:interval3}, we obtain that the $(\signature, I_\bullet, J_\circ)$-lattice is an interval of the $(\signature, I_\bullet \cup I'_\bullet, J_\circ \cup J'_\circ)$-lattice. 
\item After the second step, we can assume that~$k_\bullet \in I_\bullet$ if and only if~$k_\circ \in J_\circ$ for all~$k \in [n]$. A similar argument enables to assume that~$0_\bullet \in I_\bullet$ and~$(n+1)_\circ \in J_\circ$. Therefore, there is~$K \subseteq [n]$ such that~$I_\bullet = \llb n_\bullet ] \ssm K_\bullet$ and~$J_\circ = [ n_\circ \rrb \ssm K_\circ$, and we conclude by Lemma~\ref{lem:interval1} that the $(\signature, I_\bullet, J_\circ)$-lattice is an interval of the $\signature$-Cambrian lattice. \qedhere
\end{enumerate}
\end{proof}

\begin{remark}
Consider a triangulation~$T$ of~$\polygonS$ and its corresponding $\signature$-tree~$\tree \eqdef \phi(T)$. As defined in~\cite{LangePilaud, ChatelPilaud}, the \defn{dual Cambrian tree} of~$T$ (or of~$\tree$) is the (oriented and labeled) tree with
\begin{itemize}
\item one vertex labeled~$j$ for each triangle~$i_\sq j_\sq k_\sq$ of~$T$ with~$i < j < k$,
\item one arc between (the vertices corresponding to) any two adjacent triangles, oriented from the triangle below to the triangle above their common diagonal.
\end{itemize}
The \defn{canopy} of~$T$ (or of~$\tree$) is the sequence of signs~$\canopy(T) = \canopy(\tree) \in \{\pm\}^{n-1}$ defined by~$\canopy(T)_i = {-}$ is~$i$ is below~$i+1$ in the dual Cambrian tree of~$T$, and~$\canopy(T)_i = {+}$ otherwise. The canopy is a natural geometric parameter as it corresponds to the position of the cone of~$T$ in the $\signature$-Cambrian fan of N.~Reading and D.~Speyer~\cite{ReadingSpeyer} with respect to the hyperplanes orthogonal to the simple roots.

For a $\signature$-tree, there is a connection between its canopy and its leaves. Namely, if~$i_\bullet$ is a black leaf of~$\tree$, then $\canopy(\tree)_i \, \signature_i = {+}$ and similarly, if~$j_\circ$ is a white leaf of~$\tree$, then $\canopy(\tree)_{j-1} \, \signature_j = {-}$. When~$\signature = {-}^n$, the reverse implications hold so that the canopy~$\canopy(\tree)$ can be read directly on the tree~$\tree$. In particular, the~$(I_\bullet, J_\circ)$-trees can be identified as the ${-}^n$-trees with particular conditions on their canopy. This enables to derive easily Theorem~\ref{thm:interval} when~$\signature = {-}^n$. However, the reverse implications do not always hold for general signatures. For example, the $\signature$-tree~$\tree$ represented in~\fref{fig:bijectionTreesTriangulations} has~$\canopy(\tree)_5 \, \signature_5 = {+}$ while~$5_\bullet$ is not a leaf of~$\tree$.
\end{remark}

\begin{remark}
It is tempting to attack Conjecture~\ref{conj:interval} by induction on~$|I'_\bullet \ssm I_\bullet| + |J'_\circ \ssm J_\circ|$. By Lemma~\ref{lem:reflections}, it would be sufficient to prove that for any~$i_\bullet \notin I_\bullet$, the $(\signature, I_\bullet, J_\circ)$-lattice is an interval of the $(\signature, I_\bullet \cup \{i_\bullet\}, J_\circ)$-lattice. Lemma~\ref{lem:interval2} shows this fact in the case when the vertex following~$i_\bullet$ along the boundary of~$\conv(I_\bullet \cup J_\circ \cup \{i_\bullet\})$ is in~$J_\circ$. Lemma~\ref{lem:interval2} treats the case when the two vertices following~$i_\bullet$ along the boundary of~$\conv(I_\bullet \cup J_\circ \cup \{i_\bullet\})$ are in~$I_\bullet$ and~$J_\circ$ respectively. However, we did not manage to prove this fact when~$i_\bullet$ is followed by two or more vertices of~$I_\bullet$ along the boundary of~$\conv(I_\bullet \cup J_\circ \cup \{i_\bullet\})$. Note however that we can prove in any case that the $(\signature, I_\bullet, J_\circ)$-lattice is a lattice quotient of an interval of the $(\signature, I_\bullet \cup \{i_\bullet\}, J_\circ)$-lattice.
\end{remark}

We conclude this section with a geometric consequence of Theorem~\ref{thm:interval}. Remember that the $\signature$-Cambrian lattice can be realized geometrically as
\begin{itemize}
\item the dual graph of the $\signature$-Cambrian fan of N.~Reading and D.~Speyer~\cite{ReadingSpeyer},
\item the graph of the $\signature$-associahedron of C.~Hohlweg and C.~Lange~\cite{HohlwegLange}.
\end{itemize}
As an interval of the $\signature$-Cambrian lattice gives rise to a connected region of the $\signature$-Cambrian fan, we obtain the following geometric realization of the $(\signature, I_\bullet, J_\circ)$-lattice.

\begin{corollary}
\label{coro:regionCambrianFan}
The $(\signature, I_\bullet, J_\circ)$-increasing flip graph is realized geometrically as the dual graph of a set cones of the $\signature$-Cambrian fan of~\cite{ReadingSpeyer} corresponding to an interval of the $\signature$-Cambrian lattice.
\end{corollary}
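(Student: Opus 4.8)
The plan is to deduce this directly from Theorem~\ref{thm:interval} together with the standard dictionary between the $\signature$-Cambrian lattice and the $\signature$-Cambrian fan of~\cite{ReadingSpeyer}. First I would recall the shape of that dictionary: the $\signature$-Cambrian fan is a complete simplicial fan whose maximal cones are in bijection with the triangulations of~$\polygonS$, two maximal cones sharing a wall exactly when the corresponding triangulations differ by a flip; orienting each such wall from the cone below to the cone above the flipped diagonal, the resulting oriented dual graph is the Hasse diagram of the $\signature$-Cambrian lattice. This last point is precisely the geometric content of Lemma~\ref{lem:CambrianLattice}, namely that the increasing flip graph on triangulations of~$\polygonS$ (equivalently, on $\signature$-trees via~$\phi$) is the Hasse diagram of the $\signature$-Cambrian lattice.

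Next I would invoke Theorem~\ref{thm:interval} to identify the $(\signature, I_\bullet, J_\circ)$-lattice~$\LatIJ$ with an interval~$[\tree_\downarrow, \tree_\uparrow]$ of the $\signature$-Cambrian lattice. The successive reductions in the proof of Theorem~\ref{thm:interval} (via Lemmas~\ref{lem:interval1}, \ref{lem:interval2} and~\ref{lem:interval3}) compose to an increasing-flip-preserving injection from the $(\signature, I_\bullet, J_\circ)$-trees to the $\signature$-trees, hence to the triangulations of~$\polygonS$, hence to the maximal cones of the $\signature$-Cambrian fan; let~$S$ be the set of cones in the image. By Theorem~\ref{thm:lattice}, $\IFGIJ$ is the Hasse diagram of~$\LatIJ$, so it suffices to check that the oriented dual graph of~$S$ is the Hasse diagram of the interval~$[\tree_\downarrow, \tree_\uparrow]$. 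Two cones of~$S$ share a wall exactly when their labelling triangulations differ by a flip, that is, exactly when their labels form a cover relation of the ambient $\signature$-Cambrian lattice; and, because $[\tree_\downarrow, \tree_\uparrow]$ is an interval, a cover relation of the ambient lattice with both ends in~$[\tree_\downarrow, \tree_\uparrow]$ is the same thing as a cover relation of~$[\tree_\downarrow, \tree_\uparrow]$. Hence the wall-adjacencies inside~$S$, with their flip orientations, are exactly the cover relations of~$[\tree_\downarrow, \tree_\uparrow] \cong \LatIJ$, which is what we want.

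I expect no genuine obstacle here: the corollary is a formal consequence of Theorem~\ref{thm:interval} and of the Reading--Speyer realization. The only point deserving a line of care is the elementary poset fact used above, that cover relations of an interval of a lattice coincide with the cover relations of the ambient lattice having both ends in the interval (so that the dual graph of~$S$ has neither extra nor missing edges relative to the Hasse diagram of~$\LatIJ$). One may additionally remark that the union of the cones in~$S$ is connected, since the Hasse diagram of the lattice~$\LatIJ$ is connected and two adjacent vertices of it correspond to cones of~$S$ sharing a wall; this justifies viewing~$S$ as a connected region of the $\signature$-Cambrian fan. Finally, the analogous realization of~$\IFGIJ$ as an induced subgraph of the graph of the $\signature$-associahedron of~\cite{HohlwegLange} follows in the same way, since that graph is (an orientation of) the Hasse diagram of the $\signature$-Cambrian lattice and the induced subgraph on the vertices indexed by~$[\tree_\downarrow, \tree_\uparrow]$ is again, by the same poset fact, the Hasse diagram of~$\LatIJ$.
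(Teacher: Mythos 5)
Your proof is correct and takes essentially the same approach as the paper, which deduces the corollary directly from Theorem~\ref{thm:interval} together with the Reading--Speyer realization of the $\signature$-Cambrian lattice as the dual graph of the $\signature$-Cambrian fan. Your explicit remark that cover relations of an interval of a lattice are precisely the ambient cover relations with both endpoints in the interval is exactly the point needed to see that the dual graph of the selected cones is the Hasse diagram of~$\LatIJ$ (neither missing nor adding edges), which the paper leaves implicit.
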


\section{The $(\signature, I_\bullet, J_\circ)$-triangulation}

In this section, we use $\signature$-trees to construct a flag regular triangulation~$\CambTriang$ of the subpolytope~$\conv \bigset{ \big( \b{e}_{i_\bullet}, \b{e}_{j_\circ} \big)}{0 \le i_\bullet < j_\circ \le n+1}$ of the product of simplices~$\triangle_{\llb n_\bullet ]} \times \triangle_{[ n_\circ \rrb}$. Restricting~$\CambTriang$ to the face~$\triangle_{I_\bullet} \times \triangle_{J_\circ}$ then yields a triangulation whose dual graph is the flip graph on $(\signature, I_\bullet, J_\circ)$-trees.

\subsection{The $\signature$-triangulation}

Let~$(\b{e}_{i_\bullet})_{i_\bullet \in I_\bullet}$ denote the standard basis of~$\R^{I_\bullet}$ and~$(\b{e}_{j_\circ})_{j_\circ \in J_\circ}$ denote the standard basis of~$\R^{J_\circ}$. We consider the Cartesian product of the two standard simplices
\[
\triangle_{I_\bullet} \times \triangle_{J_\circ} \eqdef \conv \bigset{ \big( \b{e}_{i_\bullet}, \b{e}_{j_\circ} \big)}{i_\bullet \in I_\bullet, \, j_\circ \in J_\circ}
\]
and its subpolytope
\[
\subpolyIJ \eqdef \conv \bigset{ \big( \b{e}_{i_\bullet}, \b{e}_{j_\circ} \big)}{i_\bullet \in I_\bullet, \, j_\circ \in J_\circ \text{ and } i_\bullet < j_\circ}.
\]
Note that the polytopes~$\triangle_{I_\bullet} \times \triangle_{J_\circ}$ and $\subpolyIJ$ are faces of the polytopes~$\triangle_{\llb n_\bullet ]} \times \triangle_{[ n_\circ \rrb}$ and~$\subpolyIJ[{\llb n_\bullet ]}][{[ n_\circ \rrb}]$ respectively.

\begin{proposition}
\label{prop:CambrianTriangulations}
Each $(\signature, I_\bullet, J_\circ)$-tree defines a simplex
\(
\triangle_\tree \eqdef \conv \bigset{\big( \b{e}_{i_\bullet}, \b{e}_{j_\circ} \big)}{(i_\bullet, j_\circ) \in \tree}
\)
and the collection of simplices
\(
\CambTriangIJ \eqdef \bigset{\triangle_\tree}{\tree \text{ $(\signature, I_\bullet, J_\circ)$-tree}}
\)
is a flag triangulation of~$\subpolyIJ$, that we call the \defn{$\signature$-triangulation} of~$\subpolyIJ$.
\end{proposition}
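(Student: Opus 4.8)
The statement has three parts: (i) the collection $\CambTriangIJ$ of simplices $\triangle_\tree$ actually covers $\subpolyIJ$ and its members intersect face-to-face (so it is a triangulation), (ii) the triangulation is flag, and (iii) — implicitly, since these are full simplices of the product of simplices — each $\triangle_\tree$ is genuinely a simplex of the right dimension. Part (iii) is the easy observation that a $(\signature, I_\bullet, J_\circ)$-tree has $|I_\bullet| + |J_\circ| - 1$ edges and that the points $(\b{e}_{i_\bullet}, \b{e}_{j_\circ})$ corresponding to the edges of a spanning tree of a bipartite graph are affinely independent (this is the standard fact that the vertex-edge incidences of a forest are linearly independent; equivalently $\triangle_{I_\bullet} \times \triangle_{J_\circ}$ restricted to a subgraph $G$ has the subpolytope $\conv\{(\b e_i,\b e_j):(i,j)\in G\}$ of dimension = number of edges in a spanning forest of $G$, minus number of components plus one). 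Since $\GIJ$ is connected with $|I_\bullet|+|J_\circ|$ vertices by Proposition~\ref{prop:trees}, $\subpolyIJ$ has dimension $|I_\bullet|+|J_\circ|-2$, matching $\dim\triangle_\tree$.

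**The combinatorial core.** The plan for (i)+(ii) is to invoke the standard dictionary between triangulations of $\subpolyIJ$ (a subpolytope of a product of two simplices) and certain collections of trees, together with a face-to-face / pseudomanifold argument. Concretely, I would argue as follows. First, the $(\signature, I_\bullet, J_\circ)$-complex $\complexIJ$ is a pure simplicial pseudomanifold of dimension $|I_\bullet|+|J_\circ|-2$ by Propositions~\ref{prop:pseudomanifold} and the bijection with spanning trees, and it is flag by construction (it is defined as the clique complex of the non-crossing graph on edges of $\GIJ$). It therefore suffices to show that the geometric realization $\tree \mapsto \triangle_\tree$ embeds $\complexIJ$ as a triangulation of $\subpolyIJ$. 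The key point is to check that distinct $\signature$-trees give distinct top-dimensional simplices that intersect in a common face, and that every point of $\subpolyIJ$ is covered. For the face-to-face property, given two $(\signature, I_\bullet, J_\circ)$-trees $\tree, \tree'$, I would show $\triangle_\tree \cap \triangle_{\tree'} = \conv\{(\b e_{i_\bullet},\b e_{j_\circ}) : (i_\bullet,j_\circ)\in\tree\cap\tree'\}$ by exhibiting, for any edge $(i_\bullet,j_\circ)\in\tree\ssm\tree'$, a linear functional separating the corresponding vertex from $\triangle_{\tree'}$ — this uses that $(i_\bullet, j_\circ)$ must cross some edge of $\tree'$, and the crossing condition translates into an inequality of the form "$x_{i_\bullet} + (\text{stuff}) \le (\text{stuff})$" that $\triangle_{\tree'}$ satisfies but the vertex violates. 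For covering, the cleanest route is a dimension/volume or "maximal non-crossing" argument: any point $p \in \subpolyIJ$ lies in the support of some mixed cell, which (via the non-crossing condition encoded by the pile/matching machinery, cf. Lemma~\ref{lem:noncrossingmatchings}) can be completed to a non-crossing tree $\tree$ with $p \in \triangle_\tree$.

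**Where the subtlety lies.** The main obstacle — and the reason the author flags that "the proof of its regularity is a little more subtle in the Cambrian case" — is establishing that we really get a triangulation (as opposed to just a simplicial complex of simplices that may overlap improperly), and ultimately its regularity/coherence. I expect the actual proof to route through the theory of tropical oriented matroids / the Cayley trick, or through an explicit height function: one wants a weight vector $\omega: \{(i_\bullet,j_\circ)\} \to \R$ such that the lower envelope of $\{(\b e_{i_\bullet}, \b e_{j_\circ}, \omega(i_\bullet,j_\circ))\}$ projects exactly to $\CambTriangIJ$. A natural candidate is a "signature-twisted" version of the weight $\omega(i_\bullet, j_\circ) = $ (signed area or a bilinear-form expression) that Ceballos–Padrol–Sarmiento use in the Tamari case, where one replaces the linear separation by the $\signature$-polygon geometry — precisely the content of $\polygonS$ and $\polygonC$. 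The hard technical step is verifying that with this weight, a forest is non-crossing if and only if the corresponding face is in the lower envelope; the "only if" (non-crossing $\Rightarrow$ in the lower regular subdivision) is the delicate direction because in the Cambrian setting a pair of crossing edges need not have the naive slope relationship that holds when $\signature = {-}^n$.

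**Reduction to $\signature = {-}^n$ where possible.** Finally, where the geometry genuinely depends on $\signature$ one can sometimes pull back to the classical case: a change of coordinates relabeling the white/black vertices along $\polygonC$ identifies $\GIJ$ combinatorially with a sub-bipartite-graph of the alternating tree graph of \cite{CeballosPadrolSarmiento}, and one would like this identification to be realizable by a projective transformation of the ambient product of simplices carrying the CPS triangulation to ours. I would first try this reduction; if it goes through it kills (i) and (ii) and most of (iii) at once. If it does not (because the $\signature$-twist is not a projective transformation of $\subpoly$), I fall back on the explicit weight/height construction above and check the non-crossing $\Leftrightarrow$ lower-envelope equivalence by hand, using Proposition~\ref{prop:flippable} and Proposition~\ref{prop:pseudomanifold} to reduce to checking the local (codimension-one) situation.
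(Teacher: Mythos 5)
Your proposal identifies the right targets (face-to-face intersections plus covering) but leaves the two key steps as sketches and routes part of the argument in a direction that will not work. The paper's actual proof first reduces to the full case $I_\bullet = \llb n_\bullet ]$, $J_\circ = [n_\circ\rrb$ (a triangulation of a polytope restricts to a triangulation of every face), and then argues in three short steps: (a) each $\triangle_\tree$ is a full-dimensional unimodular simplex; (b) two simplices $\triangle_\tree, \triangle_{\tree'}$ intersect in a common face, because otherwise $\G$ would contain a cycle alternating between $\tree$ and $\tree'$, whose two halves are two distinct non-crossing perfect matchings on the same vertex set, contradicting Lemma~\ref{lem:noncrossingmatchings}; and (c) covering follows from a volume count: there are $\cat(n)$ $\signature$-trees by Corollary~\ref{coro:catalan}, each simplex has normalized volume $1$, and the normalized volume of $\subpoly$ is $\cat(n)$ (computed via the staircase triangulation). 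Your proposal has no analogue of (c). The "dimension/volume or maximal non-crossing" hand-wave is exactly where the proof must do work, and the volume comparison with the staircase triangulation is the missing ingredient. Your separating-hyperplane sketch for (b) could be made to work, but the alternating-cycle/matching argument is what is needed to make it clean, and you do not invoke Lemma~\ref{lem:noncrossingmatchings}, which is precisely why the paper develops that lemma at the end of Section~1.

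Two further points of concern. First, the proposed fallback via a height function and lower envelopes is addressing the wrong statement: that is regularity, which the paper treats separately in Proposition~\ref{prop:regular}, and it is acknowledged to be the more delicate part. You do not need it for the mere triangulation claim, and appealing to it here only makes the problem harder. Second, the proposed reduction to $\signature = {-}^n$ by a projective automorphism of $\subpoly$ cannot work: as the $n=3$ table in the paper shows there are four pairwise distinct $\signature$-triangulations of $\subpoly$ (and $2^{n-1}$ in general), while the automorphism group of $\subpoly$ only identifies $\signature$ with $\horimirror{\signature}$ and $\vertmirror{\signature}$ (as noted in the Remark after the proposition). A genuinely $\signature$-dependent argument is required, which is exactly what the non-crossing matching lemma supplies.
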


%
%

\begin{proof}
Since a triangulation of a polytope induces a triangulation on all its faces, we only need to prove the result for~$I_\bullet = \llb n_\bullet ]$ and~$J_\circ = [ n_\circ \rrb$. Let~$\subpoly \eqdef \subpolyIJ[{\llb n_\bullet ]}][{[ n_\circ \rrb}]$. Observe that:
\begin{itemize}
\item Each~$\triangle_\tree$ is a full-dimensional simplex since~$\tree$ is a spanning tree of~$\G$.
\item For~$\tree \ne \tree'$, the simplices~$\triangle_\tree$ and~$\triangle_{\tree'}$ intersect along a face of both. Otherwise, $\G$ would contain a cycle~$C$ that alternates between~$\tree$ and~$\tree'$, thus providing two distinct non-crossing matchings on the support of~$C$, contradicting Lemma~\ref{lem:noncrossingmatchings}.
\item The total volume of these simplices is the volume of~$\subpoly$. On the one hand, since each simplex is unimodular, Corollary~\ref{coro:catalan} shows that the total normalized volume of the simplices is~$\cat(n)$. On the other hand, the normalized volume of~$\subpoly$ is known to be~$\cat(n)$ as it is triangulated by the (bottom part of the) staircase triangulation~\cite[Sect.~6.2.3]{DeLoeraRambauSantos}.
\end{itemize}
This proves that~$\set{\triangle_\tree}{\tree \text{ $\signature$-tree}}$ is a triangulation of~$\subpoly$. It is clearly flag by definition of $\signature$-trees.
\end{proof}

\begin{remark}
Since the $\signature$-triangulation only depends on the crossings among the edges of~$\G$, the $\horimirror{\signature}$-triangulation coincides with the $\signature$-triangulation, while the $\vertmirror{\signature}$-triangulation is the image of the $\signature$-triangulation by the symmetry that simultaneously exchanges~$\b{e}_{k_\bullet}$ with~$\b{e}_{k_\circ}$ for all~$k \in [n]$.
\end{remark}

\begin{example}
Consider the case~$n = 3$.
Denote the vertices of~$\subpoly$ by
\[
\begin{array}{ccccc}
\alpha = (\b{e}_{0_\bullet}, \b{e}_{1_\circ}) &
\beta = (\b{e}_{0_\bullet}, \b{e}_{2_\circ}) &
\gamma = (\b{e}_{0_\bullet}, \b{e}_{3_\circ}) &
\delta = (\b{e}_{0_\bullet}, \b{e}_{4_\circ}) \\ &
\epsilon = (\b{e}_{1_\bullet}, \b{e}_{2_\circ}) &
\eta = (\b{e}_{1_\bullet}, \b{e}_{3_\circ}) &
\kappa = (\b{e}_{1_\bullet}, \b{e}_{4_\circ}) \\ & &
\lambda = (\b{e}_{2_\bullet}, \b{e}_{3_\circ}) &
\mu = (\b{e}_{2_\bullet}, \b{e}_{4_\circ}) \\ & & &
\nu = (\b{e}_{3_\bullet}, \b{e}_{4_\circ})
\end{array}
\]
Then the $4$ $\signature$-triangulations and the staircase triangulation of~$\subpoly$ are given by the simplices
\[
\begin{array}{c|c|c|c|c||c}
\signature & {-}{-}{-} \text{ or } {+}{+}{+} & {-}{-}{+} \text{ or } {+}{+}{-} & {-}{+}{-} \text{ or } {+}{-}{+} & {-}{+}{+} \text{ or } {+}{-}{-} & \text{staircase triang.}\\
\hline
& \alpha \beta \gamma \delta \epsilon \lambda \nu & \alpha \beta \gamma \delta \epsilon \mu \nu & \alpha \beta \gamma \delta \eta \mu \nu & \alpha \beta \gamma \delta \kappa \lambda \nu & \alpha \beta \gamma \delta \kappa \mu \nu \\
& \alpha \beta \delta \epsilon \lambda \mu \nu & \alpha \beta \gamma \epsilon \lambda \mu \nu & \alpha \beta \gamma \eta \lambda \mu \nu & \alpha \beta \gamma \eta \kappa \lambda \nu & \alpha \beta \gamma \eta \kappa \mu \nu \\
\text{simplices} & \alpha \gamma \delta \epsilon \eta \lambda \nu & \alpha \gamma \delta \epsilon \kappa \mu \nu & \alpha \beta \delta \eta \kappa \mu \nu & \alpha \beta \delta \kappa \lambda \mu \nu & \alpha \beta \gamma \eta \lambda \mu \nu \\
& \alpha \delta \epsilon \eta \kappa \lambda \nu & \alpha \gamma \epsilon \eta \kappa \mu \nu & \alpha \beta \epsilon \eta \kappa \mu \nu & \alpha \beta \epsilon \eta \kappa \lambda \nu & \alpha \beta \epsilon \eta \kappa \mu \nu \\
& \alpha \delta \epsilon \kappa \lambda \mu \nu & \alpha \gamma \epsilon \eta \lambda \mu \nu & \alpha \beta \epsilon \eta \lambda \mu \nu & \alpha \beta \epsilon \kappa \lambda \mu \nu & \alpha \beta \epsilon \eta \lambda \mu \nu
\end{array}
\]
\smallskip
\end{example}

\begin{remark}
Note that Proposition~\ref{prop:CambrianTriangulations} provides an alternative proof of Proposition~\ref{prop:pseudomanifold}.
\end{remark}

\begin{remark}
As a corollary of Proposition~\ref{prop:CambrianTriangulations} and the unimodularity of~$\subpolyIJ$, we obtain that the number of $(\signature, I_\bullet, J_\circ)$-trees is independent of~$\signature$. It is clear for~$I_\bullet = \llb n_\bullet ]$ and~$J_\circ = [ n_\circ \rrb$ using the bijection of Proposition~\ref{prop:bijectionTreesTriangulations} but we have not found a clear combinatorial reason for general~$I_\bullet$ and~$J_\circ$.
\end{remark}

Finally, we gather two geometric consequences of Proposition~\ref{prop:CambrianTriangulations}. The first is just a reformulation of Proposition~\ref{prop:CambrianTriangulations}.

\begin{corollary}
\label{coro:dualTriangulation}
The $(\signature, I_\bullet, J_\circ)$-increasing flip graph is geometrically realized as the dual graph of the triangulation~$\CambTriangIJ$.
\end{corollary}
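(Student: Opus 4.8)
The plan is to unwind the definition of the dual graph of a triangulation and match it, datum by datum, with the definition of the $(\signature, I_\bullet, J_\circ)$-increasing flip graph~$\IFGIJ$. By Proposition~\ref{prop:CambrianTriangulations}, the maximal simplices of~$\CambTriangIJ$ are exactly the simplices~$\triangle_\tree$ for~$\tree$ a $(\signature, I_\bullet, J_\circ)$-tree, and distinct trees yield distinct simplices (they have distinct vertex sets among the~$(\b{e}_{i_\bullet},\b{e}_{j_\circ})$). Hence~$\tree \mapsto \triangle_\tree$ is a canonical bijection between the vertex set of~$\IFGIJ$ and the vertex set of the dual graph of~$\CambTriangIJ$.

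Next I would identify the edges. Since~$\tree$ has~$|I_\bullet|+|J_\circ|-1$ edges, the simplex~$\triangle_\tree$ has that many vertices, and deleting the vertex~$(\b{e}_{i_\bullet},\b{e}_{j_\circ})$ corresponds to deleting the edge~$(i_\bullet,j_\circ)$ from~$\tree$; thus the codimension~$1$ faces of~$\triangle_\tree$ are precisely the simplices~$\conv\bigset{(\b{e}_{i_\bullet},\b{e}_{j_\circ})}{(i_\bullet,j_\circ)\in\tree\ssm\{\delta\}}$ for~$\delta\in\tree$. Two maximal simplices~$\triangle_\tree$ and~$\triangle_{\tree'}$ therefore share a codimension~$1$ face if and only if~$\tree\ssm\{\delta\}=\tree'\ssm\{\delta'\}$ for some~$\delta\in\tree$ and~$\delta'\in\tree'$, that is, if and only if~$\tree$ and~$\tree'$ are related by a flip. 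Because~$\CambTriangIJ$ is a triangulation, each interior codimension~$1$ face lies in exactly two facets (thinness, as in Proposition~\ref{prop:pseudomanifold}), so these adjacencies are exactly the edges of the dual graph. Hence the dual graph of~$\CambTriangIJ$, as an abstract graph, is the flip graph on $(\signature, I_\bullet, J_\circ)$-trees.

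Finally I would install the orientation: orienting each dual edge from~$\triangle_\tree$ to~$\triangle_{\tree'}$ precisely when the flip from~$\tree$ to~$\tree'$ is slope increasing in the sense of Lemma~\ref{lem:characterizationIncreasingFlip} turns the dual graph into~$\IFGIJ$ by definition. Choosing a point in the relative interior of each~$\triangle_\tree$ and joining the points of simplices sharing a codimension~$1$ face produces a geometric graph isomorphic to this oriented dual graph, which gives the claimed geometric realization. There is essentially no obstacle beyond this bookkeeping, as the author indicates; the only point requiring (already available) care is the thinness of~$\CambTriangIJ$, which ensures that "sharing a codimension~$1$ face" is a well-defined adjacency relation matching flips one-to-one.
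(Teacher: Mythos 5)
Your proposal is correct and is essentially the argument the paper has in mind: the paper presents this corollary as ``just a reformulation of Proposition~\ref{prop:CambrianTriangulations}'' with no further detail, and you simply spell out that reformulation (facets $\leftrightarrow$ trees, shared codimension-$1$ faces $\leftrightarrow$ flips, orientation from slope-increasing flips). The bookkeeping you perform, including the appeal to thinness, is exactly what is implicit in the paper.
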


The second is an application of the Cayley trick~\cite{HuberRambauSantos} to visualize triangulations of products of simplices as mixed subdivisions of generalized permutahedra. Recall that a \defn{generalized permutahedron}~\cite{Postnikov, PostnikovReinerWilliams} is a polytope whose normal fan coarsens the normal fan of the permutahedron~$\conv\set{(\sigma_1, \dots, \sigma_n)}{\sigma \in \fS_n}$. For example, the Minkowski sum~$\sum_{I \subseteq [n]} y_I \, \triangle_I$ is a generalized permutahedron for any family~$(y_I)_{I \subseteq [n]}$ (where~$\triangle_I \eqdef \conv\set{\b{e}_i}{i \in I}$ denotes the face of the standard simplex corresponding to~$I$). The following statement is illustrated in \fref{fig:mixedSubdivision}.

\begin{corollary}
\label{coro:mixedSubdivision}
The collection of generalized permutahedra~$\sum_{i_\bullet \in I_\bullet} \triangle_{\set{j_\circ \in J_\circ}{(i_\bullet, j_\circ) \in \tree}}$, where~$\tree$ ranges over all $(\signature, I_\bullet, J_\circ)$-trees, forms a coherent fine mixed subdivision of the generalized permutahedron~$\sum_{i_\bullet \in I_\bullet} \triangle_{\set{j_\circ \in J_\circ}{i_\bullet < j_\circ}}$.
\end{corollary}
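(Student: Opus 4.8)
The plan is to feed Proposition~\ref{prop:CambrianTriangulations} into the Cayley trick of~\cite{HuberRambauSantos}. The first task is to recognize~$\subpolyIJ$ as a Cayley polytope. For each~$i_\bullet \in I_\bullet$ set~$S_{i_\bullet} \eqdef \set{j_\circ \in J_\circ}{i_\bullet < j_\circ}$; this is nonempty since~$\max(I_\bullet) < \max(J_\circ)$, and~$S_{\min(I_\bullet)} = J_\circ$ since~$\min(I_\bullet) < \min(J_\circ)$. Placing a copy of the face~$\triangle_{S_{i_\bullet}}$ of~$\triangle_{J_\circ}$ inside the slice~$\{\b{e}_{i_\bullet}\} \times \R^{J_\circ}$ for each~$i_\bullet$, the convex hull of all these copies is exactly~$\subpolyIJ \subseteq \triangle_{I_\bullet} \times \triangle_{J_\circ}$. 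Hence~$\subpolyIJ$ is the Cayley embedding of the tuple~$(\triangle_{S_{i_\bullet}})_{i_\bullet \in I_\bullet}$, whose associated Minkowski sum is~$\sum_{i_\bullet \in I_\bullet} \triangle_{S_{i_\bullet}} = \sum_{i_\bullet \in I_\bullet} \triangle_{\set{j_\circ \in J_\circ}{i_\bullet < j_\circ}}$, the generalized permutahedron of the statement.

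Next, invoke the Cayley trick: it gives an isomorphism between the poset of polyhedral subdivisions of~$\subpolyIJ$ and the poset of mixed subdivisions of~$\sum_{i_\bullet \in I_\bullet} \triangle_{S_{i_\bullet}}$, restricting to an isomorphism on the regular (coherent) subdivisions, under which a simplex with vertex set~$\set{(\b{e}_{i_\bullet}, \b{e}_{j_\circ})}{(i_\bullet, j_\circ) \in F}$ corresponds to the mixed cell~$\sum_{i_\bullet \in I_\bullet} \triangle_{\set{j_\circ \in J_\circ}{(i_\bullet, j_\circ) \in F}}$. Applying this to the simplices~$\triangle_\tree$ of the~$\signature$-triangulation~$\CambTriangIJ$ produces precisely the collection of generalized permutahedra in the statement, subdividing~$\sum_{i_\bullet \in I_\bullet} \triangle_{S_{i_\bullet}}$. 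It remains to check the two adjectives. A mixed cell~$\sum_{i_\bullet} \triangle_{N_{i_\bullet}}$ is \emph{fine} exactly when the bipartite graph on~$I_\bullet \sqcup J_\circ$ with edges~$\set{(i_\bullet, j_\circ)}{j_\circ \in N_{i_\bullet}}$ is a forest; for the cell coming from a~$(\signature, I_\bullet, J_\circ)$-tree~$\tree$ this graph is~$\tree$ itself, which is a spanning tree by Proposition~\ref{prop:trees}, hence a forest --- equivalently, this just records that~$\CambTriangIJ$ triangulates~$\subpolyIJ$ into (unimodular) simplices. And the subdivision is \emph{coherent} because the Cayley trick takes regular triangulations to coherent mixed subdivisions, while~$\CambTriangIJ$ is regular, being the restriction to the face~$\subpolyIJ$ of~$\subpoly$ of the regular $\signature$-triangulation~$\CambTriang$ (a lifting function inducing~$\CambTriang$ restricts to one inducing~$\CambTriangIJ$, so the restriction of a regular triangulation to a face is again regular).

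There is no real obstacle here: granting Proposition~\ref{prop:CambrianTriangulations} together with the regularity of~$\CambTriang$, the corollary is a formal consequence of the Cayley trick. The only care needed is bookkeeping --- one must verify that the Cayley embedding of~$(\triangle_{S_{i_\bullet}})_{i_\bullet \in I_\bullet}$ reproduces~$\subpolyIJ$ with the vertex labels~$(\b{e}_{i_\bullet}, \b{e}_{j_\circ})$ exactly as defined in the paper, and that the faces~$S_{i_\bullet}$ are the correct ones so that the target Minkowski sum is the one in the statement rather than a sum over a different index set. As a sanity check, all dimension counts agree: each~$\triangle_\tree$ is a full-dimensional simplex in the~$(|I_\bullet| + |J_\circ| - 2)$-dimensional polytope~$\subpolyIJ$, while setting~$N_{i_\bullet} \eqdef \set{j_\circ \in J_\circ}{(i_\bullet, j_\circ) \in \tree}$ and using~$\sum_{i_\bullet \in I_\bullet} |N_{i_\bullet}| = |I_\bullet| + |J_\circ| - 1$ (the number of edges of~$\tree$), the corresponding mixed cell~$\sum_{i_\bullet \in I_\bullet} \triangle_{N_{i_\bullet}}$ has dimension~$\sum_{i_\bullet \in I_\bullet} (|N_{i_\bullet}| - 1) = (|I_\bullet| + |J_\circ| - 1) - |I_\bullet| = |J_\circ| - 1 = \dim \sum_{i_\bullet \in I_\bullet} \triangle_{S_{i_\bullet}}$, as it must for a fine cell.
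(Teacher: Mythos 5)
Your proof is correct and takes essentially the same route the paper intends: the corollary is obtained by feeding the regular flag triangulation~$\CambTriangIJ$ of Propositions~\ref{prop:CambrianTriangulations} and~\ref{prop:regular} into the Cayley trick of~\cite{HuberRambauSantos}. You merely spell out the bookkeeping that the paper leaves implicit, namely the identification of~$\subpolyIJ$ as the Cayley polytope of the tuple~$(\triangle_{S_{i_\bullet}})_{i_\bullet \in I_\bullet}$, fineness via the tree criterion, and coherence via regularity of the triangulation.
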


\begin{figure}
	\capstart
	\centerline{\includegraphics[scale=1]{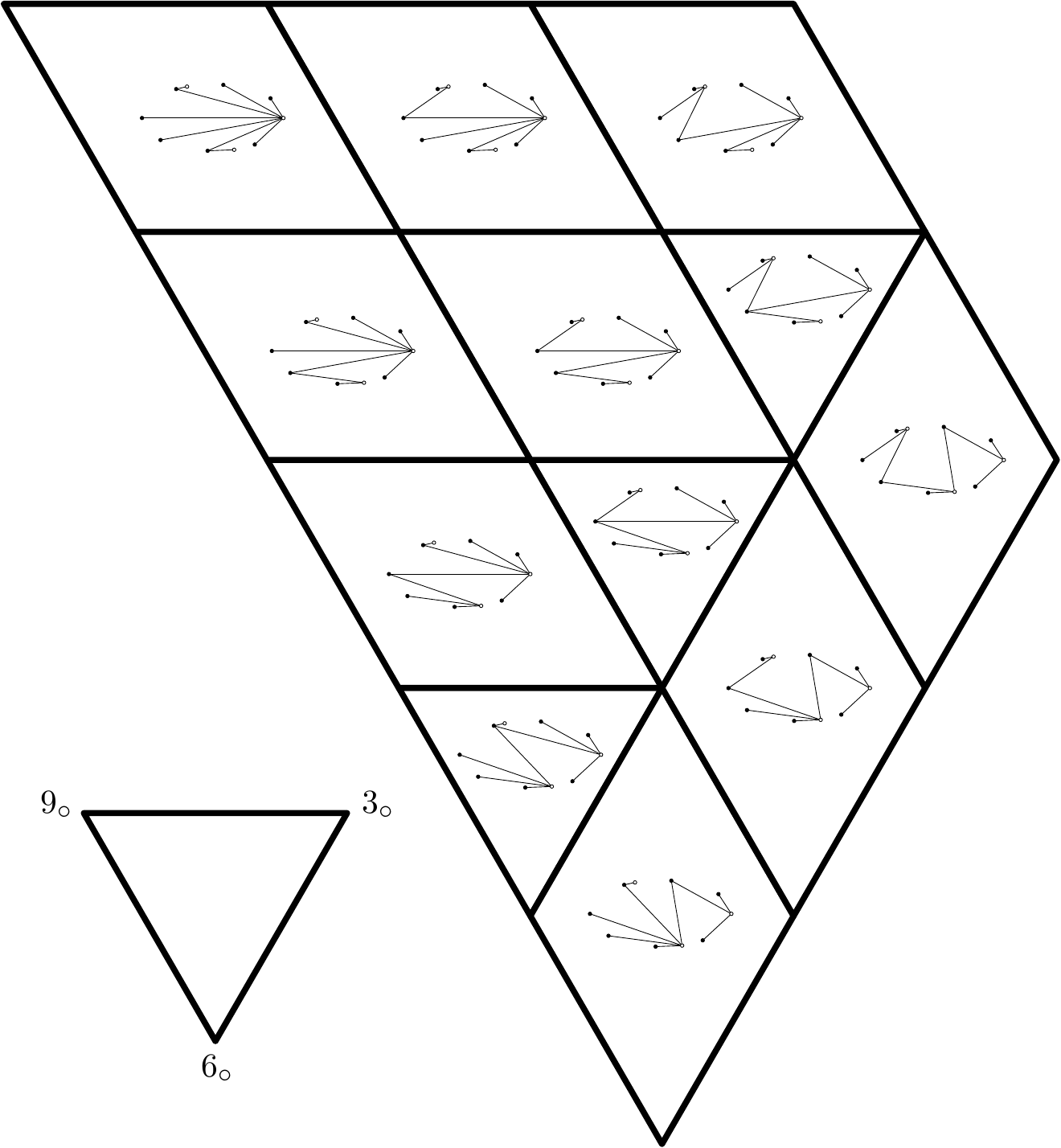}}
	\caption{The mixed subdivision realization of the $(\signature, I_\bullet, J_\circ)$-lattice. Here, ${\signature = {-}{+}{+}{-}{+}{-}{-}{+}}$, ${I_\bullet = \llb 8_\bullet ] \ssm \{3_\bullet, 6_\bullet\}}$ and~$J_\circ = \{3_\circ, 6_\circ, 9_\circ\}$. Compare to Figures~\ref{fig:lattice} and~\ref{fig:tropicalHyperplaneArrangement}.}
	\label{fig:mixedSubdivision}
\end{figure}

\subsection{Regularity}

Recall that a triangulation~$T$ of a point set~$P$ is \defn{regular} if there exists a lifting function~$h : P \to \R$ such that~$T$ is the projection of the lower convex hull of the lifted point set~$\set{(p,h(p))}{p \in P}$.

\begin{proposition}
\label{prop:regular}
For any~$\signature \in \{\pm\}^n$, $I_\bullet \subseteq \llb n_\bullet ]$ and~$J_\circ \subseteq [ n_\circ \rrb$, the triangulation~$\CambTriangIJ$ is regular.
\end{proposition}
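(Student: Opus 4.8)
The plan is to write down an explicit height function on the vertices of the relevant subpolytope and to check that the induced lower regular subdivision is exactly the Cambrian triangulation.

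First, since a regular triangulation of a polytope restricts to a regular triangulation of each of its faces (see~\cite{DeLoeraRambauSantos}), and since $\CambTriangIJ$ is by definition the restriction of $\CambTriang$ to the face $\triangle_{I_\bullet}\times\triangle_{J_\circ}$ of $\triangle_{\llb n_\bullet ]}\times\triangle_{[ n_\circ \rrb}$, it suffices to treat the case $I_\bullet=\llb n_\bullet ]$ and $J_\circ=[ n_\circ \rrb$, \ie to prove that the $\signature$-triangulation $\CambTriang$ of $\subpolyIJ[{\llb n_\bullet ]}][{[ n_\circ \rrb}]$ is regular. To do so, I would realize the $\signature$-polygon $\polygonS$ in the plane with its vertex $k_\sq$ placed at $p_k\eqdef(k,\,\lambda\signature_k c_k)$, where $c_k\eqdef k(n+1-k)$; thus $c_0=c_{n+1}=0$, the `$+$'-vertices of $\polygonS$ lie on a strictly concave arc and the `$-$'-vertices on a strictly convex arc, so that $\polygonS$ is strictly convex with the prescribed shape, and $\lambda>0$ is a constant to be chosen large. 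To each edge $(i_\bullet,j_\circ)$ of $\G$, equivalently to each diagonal $(i_\sq,j_\sq)$ of $\polygonS$, assign the height $h_{i,j}\eqdef\|p_i-p_j\|^2$. As adding a separable function $f(i_\bullet)+g(j_\circ)$ to a height function does not change the induced regular subdivision, this is the same lifting as $-\langle p_i,p_j\rangle$; in the language of Section~5 it amounts to placing the apices of a tropical hyperplane arrangement at the vertices of $\polygonS$, and the present proposition is exactly what makes that construction work. Since the simplices $\triangle_\tree$, for $\tree$ ranging over $\signature$-trees, already form a triangulation of $\subpolyIJ[{\llb n_\bullet ]}][{[ n_\circ \rrb}]$ by Proposition~\ref{prop:CambrianTriangulations}, it is then enough to check that each $\triangle_\tree$ is a face of the lower hull: for every $\signature$-tree $\tree$ there should exist potentials $(u_{i_\bullet})_{i_\bullet\in\llb n_\bullet ]}$ and $(v_{j_\circ})_{j_\circ\in[ n_\circ \rrb}$ with $u_{i_\bullet}+v_{j_\circ}=h_{i,j}$ whenever $(i_\bullet,j_\circ)\in\tree$ and $u_{i_\bullet}+v_{j_\circ}\le h_{i,j}$ otherwise.

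Now fix a $\signature$-tree $\tree$. As $\tree$ is a spanning tree of $\G$, the equations $u_{i_\bullet}+v_{j_\circ}=h_{i,j}$ over the edges of $\tree$ determine the potentials up to a global additive constant, and for an edge $(i_\bullet,j_\circ)\notin\tree$ one computes that $h_{i,j}-(u_{i_\bullet}+v_{j_\circ})$ equals the alternating sum of $h$ around the unique cycle of $\tree\cup\{(i_\bullet,j_\circ)\}$, with the $(i_\bullet,j_\circ)$-term counted positively. Drawn inside $\polygonS$ this cycle bounds a region, which can be cut into quadrilaterals along diagonals of $\polygonS$; summing the identity $\|p_a-p_c\|^2+\|p_b-p_d\|^2-\|p_a-p_d\|^2-\|p_b-p_c\|^2=-2\langle p_a-p_b,\ p_c-p_d\rangle$ over these quadrilaterals (the contributions of the internal diagonals cancel pairwise) expresses $h_{i,j}-(u_{i_\bullet}+v_{j_\circ})$, up to controllable signs, as a sum of terms $-2\langle p_a-p_b,p_c-p_d\rangle$ indexed by the ``flip squares'' $a<b<c<d$ arising in the cut. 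Because $\tree$ is non-crossing, these flip squares sit coherently in $\polygonS$, and the crux is that for $\lambda$ large enough $\langle p_a-p_b,p_c-p_d\rangle$ is negative exactly when the diagonals $(a_\sq,c_\sq)$ and $(b_\sq,d_\sq)$ cross in $\polygonS$; together with the elementary fact that among the three pairings of $\{a,b,c,d\}$ the crossing one is never $\{(a,b),(c,d)\}$ (the two smallest labels are always cyclically adjacent in $\polygonS$, its boundary order being unimodal), this makes every term carry the common sign that gives $h_{i,j}-(u_{i_\bullet}+v_{j_\circ})\ge0$, as required.

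The main obstacle is precisely this sign statement: that on each flip square the inner product $\langle p_a-p_b,p_c-p_d\rangle$ detects the crossing pattern of the two-sided polygon $\polygonS$. When $\signature={-}^n$, $\polygonS$ is the standard convex polygon and this is immediate, which is why the classical case of~\cite{CeballosPadrolSarmiento} is easy; for a general signature the vertices straddle the segment $(0_\sq,(n+1)_\sq)$, the polygon is genuinely two-sided, and one must verify, for a sufficiently stretched realization (large $\lambda$), that every flip-square inequality points the right way. A way to bypass this global computation is to prove the statement ``each $\triangle_\tree$ is a lower face'' by induction on $n$ (equivalently on $|I_\bullet|+|J_\circ|$, working directly with $\GIJ$): by Proposition~\ref{prop:trees} the tree $\tree$ has a leaf at a boundary vertex of $\conv(I_\bullet\cup J_\circ)$; removing it leaves a $(\signature,I_\bullet',J_\circ')$-tree, whose potentials exist by the inductive hypothesis, and one extends them across the single reinstated edge — the only inequalities left to verify then involve that edge and reduce, one neighbour at a time, to a single flip-square comparison.
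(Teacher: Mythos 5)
Your high-level strategy matches the paper's: restrict to $I_\bullet=\llb n_\bullet ]$, $J_\circ=[ n_\circ \rrb$, exhibit an explicit height, and verify the local ``flip-square'' inequality $h(i_\bullet,j_\circ)+h(i'_\bullet,j'_\circ)>h(i_\bullet,j'_\circ)+h(i'_\bullet,j_\circ)$ for every crossing pair. The paper also observes this is all that is needed, and then produces a height of the form $h=f\circ\ell\circ\phi^{-1}$, where $\ell$ records the smaller of the two numbers of vertices of $\polygonS$ cut off by the diagonal and $f$ is any strictly concave increasing function; the proof then is a short count of the vertices in the four regions cut out by the two crossing diagonals.

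The concrete height you propose, however, does not work, and the ``crux'' sign claim you yourself single out is false. Take $\signature={-}^n$ with $n\ge 4$, so $p_k=(k,-\lambda c_k)$ with $c_k=k(n+1-k)$, and consider the crossing edges $(0_\bullet,2_\circ)$ and $(1_\bullet,3_\circ)$ (their non-crossing companion pair being $(0_\bullet,3_\circ)$, $(1_\bullet,2_\circ)$). With $h_{i,j}=\|p_i-p_j\|^2$ the required inequality is equivalent to $\langle p_0-p_1,\,p_2-p_3\rangle<0$, but
\[
\langle p_0-p_1,\,p_2-p_3\rangle \;=\; \big\langle(-1,\lambda c_1),\ (-1,\lambda(c_3-c_2))\big\rangle \;=\; 1+\lambda^2\,n(n-4)\;>\;0
\]
for all $\lambda$ once $n\ge 4$ (for $n=5$ this is $1+5\lambda^2$). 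So the lifting $\|p_i-p_j\|^2$ actually folds the wrong way across this wall for every $\lambda$, already in the classical Tamari case you describe as ``immediate''. The reason is that $\|p_i-p_j\|^2$, after dropping the separable part, is $-2\langle p_i,p_j\rangle$; with $p_k=(k,\cdot)$ this contributes a term $+\tfrac12(j-i)^2$ to the effective height, which is strictly \emph{convex} in $j-i$, whereas the paper's remark explains that in the Tamari case one needs a strictly \emph{concave} function of $j-i$. Flipping the overall sign to $-\|p_i-p_j\|^2$ does not help either: then the pair $(2_\bullet,4_\circ)$, $(3_\bullet,5_\circ)$ (for $n=5$) fails in the same way. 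So the claim ``$\langle p_a-p_b,p_c-p_d\rangle$ is negative exactly when the diagonals $(a_\sq,c_\sq)$ and $(b_\sq,d_\sq)$ cross'' does not hold for this placement at any $\lambda$, and the potential/flip-square decomposition built on it collapses. Your fallback inductive scheme is only sketched and in any case still hinges on the same unproven sign comparison at the reinstated edge. To repair the argument you essentially need to replace the quadratic height by one that is genuinely concave in the appropriate combinatorial length, which is exactly what the paper's $f\circ\ell\circ\phi^{-1}$ does; there $\ell$ is not the Euclidean length but the smaller vertex count on one side of the diagonal, and the concavity of $f$ together with the four-region count is what makes every flip-square inequality strict.
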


\begin{proof}
Consider two adjacent $\signature$-trees~$\tree, \tree'$ with~$\tree \ssm \{(i_\bullet, j_\circ)\} = \tree' \ssm \{(i'_\bullet, j'_\circ)\}$. Then the linear dependence between the vertices of~$\triangle_\tree$ and~$\triangle_\tree'$ is given by
\[
(\b{e}_{i_\bullet}, \b{e}_{j_\circ}) + (\b{e}_{i'_\bullet}, \b{e}_{j'_\circ}) = (\b{e}_{i_\bullet}, \b{e}_{j'_\circ}) + (\b{e}_{i'_\bullet}, \b{e}_{j_\circ}).
\]
Therefore, we just need to find a lifting function~$h : \set{(i_\bullet, j_\circ)}{i_\bullet \in I_\bullet, \, j_\circ \in J_\circ, \, i_\bullet < j_\circ} \to \R$ such that for any two crossing edges~$(i_\bullet, j_\circ)$ and~$(i'_\bullet, j'_\circ)$ of~$\G$, we have
\[
h \big( (i_\bullet, j_\circ) \big) + h \big( (i'_\bullet, j'_\circ) \big) > h \big( (i_\bullet, j'_\circ) \big) + h \big( (i'_\bullet, j_\circ) \big).
\]
For this, consider any strictly concave increasing function~$f: \R \to \R$. For a diagonal~$\zeta$ of~$\polygonS$, we denote by~$\ell(\zeta)$ the minimum between the number of vertices of~$\polygonS$ on each side of the diagonal~$\zeta$. Consider two crossing diagonals~$\zeta$ and~$\eta$ of~$\polygonS$. These diagonals decompose the polygon~$\polygonS$ into four regions that we denote by~$A,B,C,D$ such that~$\zeta$ separates~$A \cup B$ from $C \cup D$ and~$|A \cup B| \le |C \cup D|$, while~$\eta$ separates~$A \cup C$ from $B \cup D$ and~$|A \cup C| \le |B \cup D|$. We also denote accordingly by~$\alpha, \beta, \gamma, \delta$ the boundary edges of the square with diagonals~$\zeta, \eta$. Thus, we have
\begin{gather*}
\ell(\zeta) = |A| + |B| + 1 \qquad\text{and}\qquad \ell(\eta) = |A| + |C| + 1 \\
\text{while}\qquad \ell(\alpha) = |A|, \qquad \ell(\beta) \le |B|, \qquad \ell(\gamma) \le |C|, \qquad\text{and}\qquad \ell(\delta) \le |A| + |B| + |C| + 2.
\end{gather*}
Using the strict concavity of~$f$ for the first inequality and the increasingness for the second inequality, we obtain that
\[
f(\ell(\zeta)) + f(\ell(\eta)) > f(\ell(\alpha)) + f(\ell(\delta))
\qquad\text{and}\qquad
f(\ell(\zeta)) + f(\ell(\eta)) > f(\ell(\beta)) + f(\ell(\gamma)).
\]
Finally, we transport this convenient function through the bijection~$\phi$ of Proposition~\ref{prop:bijectionTreesTriangulations} to obtain a suitable lifting function~$h \eqdef f \circ \ell \circ \phi^{-1}$.
\end{proof}

\begin{remark}
In the classical Tamari case when~$\signature = {-}^n$, the function~$\ell$ in the proof of Proposition~\ref{prop:regular} can be replaced by~${\ell' \big( (i_\bullet, j_\circ) \big) = j_\circ - i_\bullet}$. Note however that this simple function~$\ell'$ fails for arbitrary signatures~$\signature \in \{\pm\}^n$.
\end{remark}

\begin{remark}
Propositions~\ref{prop:CambrianTriangulations} and~\ref{prop:regular} enable us to understand~$2^{n-1}$ distinct regular triangulations of~$\subpoly \eqdef \subpolyIJ[{\llb n_\bullet ]}][{[n_\circ \rrb}]$. It would be interesting to investigate if one can understand similarly more (regular) triangulations of~$\subpoly$. Note that not all regular triangulations of~$\subpoly$ are flag. Some computations:

\bigskip
\centerline{
	\begin{tabular}{l|ccccc}
		$n$ & 1 & 2 & 3 & 4 & 5 \\
		\hline
		\# $\signature$-triangulations of~$\subpoly$ & 1 & 1 & 2 & 4 & 8 \\
		\# regular triangulations of~$\subpoly$ & 1 & 1 & 2 & 20 & 3324 \\
		\# flag regular triangulations of~$\subpoly$ & 1 & 1 & 2 & 16 & 848
	\end{tabular}
}
\medskip
\end{remark}

\section{Tropical realization}

In this section, we exploit the triangulation~$\CambTriang$ to obtain a geometric realization of the $(\signature, I_\bullet, J_\circ)$-lattice as the edge graph of a polyhedral complex induced by a tropical hyperplane arrangement. We follow the same lines as~\cite{CeballosPadrolSarmiento}, relying on work of M.~Develin and B.~Sturmfels~\cite{DevelinSturmfels}.
%
Define the following geometric objects in the tropical projective space~$\TP^{|J_\circ|-1} = \R^{J_\circ} / \R\one$:
\begin{enumerate}
\item For each~$i_\bullet \in I_\bullet$, consider the inverted tropical hyperplane at~$\big( h(i_\bullet, j_\circ) \big)_{j_\circ \in J_\circ}$ defined by
\[
H_{i_\bullet} \eqdef \set{x \in \TP^{|J_\circ|-1}}{\max\nolimits_{j_\circ \in J_\circ} \big\{ x_{j_\circ} - h(i_\bullet, j_\circ) \big\} \text{ is attained twice}}.
\]

\item For each edge~$(i_\bullet, j_\circ)$ of~$\GIJ$, consider the polyhedron
\[
g(i_\bullet, j_\circ) \eqdef \set{x \in \R^{J_\circ}}{x_{k_\circ} - x_{j_\circ} \le h(i_\bullet, k_\circ) - h(i_\bullet, j_\circ) \text{ for each } k_\circ \in J_\circ} \cap \{x_{\max(J_\circ)} = 0\}.
\]

\item For each covering~$(\signature, I_\bullet, J_\circ)$-forest~$\forest$, consider the polyhedron
\[
g(\forest) \eqdef \bigcap_{(i_\bullet, j_\circ) \in \forest} g(i_\bullet, J_\circ).
\]

\item For each $(I_\bullet, J_\circ)$-tree~$\tree$, consider the point~$g(\tree) \in \R^{J_\circ}$ whose~$k_\circ$ coordinate is given by
\[
g(\tree)_{k_\circ} = \sum_{(i_\bullet, j_\circ) \in p(\tree, k_\circ)} \pm h(i_\bullet, j_\circ),
\]
where~$p(\tree, k_\circ)$ is the unique path in~$\tree$ from~$k_\circ$ to~$\max(J_\circ)$, and the sign of the summand~$h(i_\bullet, j_\circ)$ is negative if~$p(\tree, k_\circ)$ traverses~$(i_\bullet, j_\circ)$ from~$i_\bullet$ to~$j_\circ$ and positive otherwise.
\end{enumerate}
We call \defn{$(\signature, I_\bullet, J_\circ)$-associahedron} the polyhedral complex~$\Asso$ given by the bounded cells of the arrangement of tropical hyperplanes~$H_{i_\bullet}$ for~$i_\bullet \in I_\bullet$.
The following statement is identical to that of~\cite{CeballosPadrolSarmiento} and its proof is similar.

\begin{theorem}
\label{thm:tropical}
The $(\signature, I_\bullet, J_\circ)$-associahedron~$\Asso$ is a polyhedral complex whose cell poset is anti-isomorphic to the inclusion poset of interior faces of the $(\signature, I_\bullet, J_\circ)$-complex. In particular,
\begin{itemize}
\item each internal~$(\signature, I_\bullet, J_\circ)$-forest~$\forest$ corresponds to a face~$g(\forest)$ of~$\Asso$;
\item each $(\signature, I_\bullet, J_\circ)$-tree~$\tree$ corresponds to a vertex~$g(\tree)$ of~$\Asso$;
\item each flip corresponds to an edge of~$\Asso$.
\end{itemize}
In particular, the edge graph of~$\Asso$ is the flip graph on $(\signature, I_\bullet, J_\circ)$-trees. In fact, when oriented in the linear direction~$\sum_{j_\circ \in J_\circ \ssm \{\max(J_\circ)\}} \signature_j \, x_{j_\circ}$, the edge graph of~$\Asso$ is the increasing flip graph~$\IFGIJ$ on $(\signature, I_\bullet, J_\circ)$-trees.
\end{theorem}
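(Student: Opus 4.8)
The plan is to follow the template of Develin–Sturmfels~\cite{DevelinSturmfels} as adapted in~\cite{CeballosPadrolSarmiento}, using the $\signature$-triangulation~$\CambTriangIJ$ of~$\subpolyIJ$ (Proposition~\ref{prop:CambrianTriangulations}) and its regularity (Proposition~\ref{prop:regular}) as the engine. The key dictionary is: a regular triangulation of a product-of-simplices face, encoded by a height function~$h$, is dual to an arrangement of (inverted) tropical hyperplanes $H_{i_\bullet}$ placed at the points $\big(h(i_\bullet,j_\circ)\big)_{j_\circ}$, with the bounded cells of the arrangement anti-isomorphic to the interior faces of the triangulation. So the first step is to recall this Develin–Sturmfels correspondence precisely in the form needed, and to check that the height function $h = f\circ\ell\circ\phi^{-1}$ from the proof of Proposition~\ref{prop:regular} is generic enough that the induced triangulation is exactly $\CambTriangIJ$ (this was the content of Proposition~\ref{prop:regular}, so it comes for free).

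**Main steps.** First I would identify, via Cayley/regular-subdivision duality, the interior faces of $\CambTriangIJ$ with the internal $(\signature,I_\bullet,J_\circ)$-forests: Lemma~\ref{lem:interiorFaces} already characterizes the latter, and Proposition~\ref{prop:CambrianTriangulations} says the facets are exactly the $\triangle_\tree$ for $(\signature,I_\bullet,J_\circ)$-trees $\tree$, while Proposition~\ref{prop:pseudomanifold} gives thinness — together these say the poset of interior faces of the triangulation is the poset of internal forests. Second, I would verify that the polyhedron $g(i_\bullet,j_\circ)$ defined in~(2) is precisely the closure of the region of $\TP^{|J_\circ|-1}$ in which the index $j_\circ$ attains the maximum $\max_{k_\circ}\{x_{k_\circ}-h(i_\bullet,k_\circ)\}$, so that $g(\forest)=\bigcap_{(i_\bullet,j_\circ)\in\forest} g(i_\bullet,j_\circ)$ is the cell of the arrangement on which, for each $i_\bullet$, the coordinate indexed by the $\tree$-neighbor (following $\forest$) realizes the max; the standard transversality/genericity argument from~\cite{DevelinSturmfels} then shows $g(\forest)$ is nonempty and bounded exactly when $\forest$ is an internal forest, and has the expected dimension $|J_\circ|-1-(\text{number of edges of }\forest\text{ beyond a spanning structure})$. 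For a $(\signature,I_\bullet,J_\circ)$-tree $\tree$ this cell degenerates to the single point $g(\tree)$, whose coordinates are computed by solving the triangular system $x_{k_\circ}-x_{j_\circ}=h(i_\bullet,k_\circ)-h(i_\bullet,j_\circ)$ along the unique path $p(\tree,k_\circ)$ — this is exactly formula~(4), with the sign depending on the orientation of traversal. Third, flips correspond to the codimension-one interior faces $\tree\ssm\{\delta\}$ with $\delta$ flippable (Proposition~\ref{prop:flippable}, Lemma~\ref{lem:interiorFaces}), giving the bounded edges of $\Asso$ and hence identifying the edge graph with the flip graph.

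**Orientation.** For the final sentence I would compute $\sum_{j_\circ\ne\max(J_\circ)}\signature_j\, g(\tree)_{j_\circ}$ across a flip $\tree\to\tree'$ with $\tree\ssm\{(i_\bullet,j_\circ)\}=\tree'\ssm\{(i'_\bullet,j'_\circ)\}$ and show it increases exactly when the flip is slope increasing in the sense of Lemma~\ref{lem:characterizationIncreasingFlip}. The change in each path-sum is localized: only the coordinates $x_{k_\circ}$ for $k_\circ$ in the subtree cut off by $\delta$ are affected, and the net change is governed by the linear relation $(\b e_{i_\bullet},\b e_{j_\circ})+(\b e_{i'_\bullet},\b e_{j'_\circ})=(\b e_{i_\bullet},\b e_{j'_\circ})+(\b e_{i'_\bullet},\b e_{j_\circ})$ together with the strict inequality $h(i_\bullet,j_\circ)+h(i'_\bullet,j'_\circ)>h(i_\bullet,j'_\circ)+h(i'_\bullet,j_\circ)$ from Proposition~\ref{prop:regular}; the sign $\signature_j$ enters because the direction $j_\circ$ vs.\ $j'_\circ$ lies above or below the line $(i_\bullet j_\circ)$ according to the $\SSS$/$\ZZZ$ criterion of Lemma~\ref{lem:characterizationIncreasingFlip}(2)--(3). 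I expect the bookkeeping of which coordinates change and with which sign — i.e.\ matching the combinatorial "slope increasing" condition to the sign of the linear functional's increment — to be the one genuinely delicate point; everything else is a faithful transcription of~\cite{CeballosPadrolSarmiento,DevelinSturmfels} with $\signature$ only entering through the already-established regularity of $\CambTriangIJ$ and through the choice of orienting functional.
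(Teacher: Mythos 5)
The paper gives no written proof of Theorem~\ref{thm:tropical}; it simply states that ``the following statement is identical to that of~\cite{CeballosPadrolSarmiento} and its proof is similar.'' Your proposal is exactly that approach — transplant the Develin--Sturmfels dictionary between regular triangulations of (subpolytopes of) products of simplices and tropical hyperplane arrangements, using Propositions~\ref{prop:CambrianTriangulations} and~\ref{prop:regular} as the input, and then verify the orientation of the edge graph by the path-sum computation for $g(\tree)$ — so it matches the paper's intended argument, and your identification of the orientation bookkeeping (where $\signature$ genuinely enters) as the one Cambrian-specific delicate point is the right thing to flag.
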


\begin{example}
Consider~$\signature = {-}{+}{+}{-}{+}{-}{-}{+}$, $I_\bullet = \llb 8_\bullet ] \ssm \{3_\bullet, 6_\bullet\}$ and~$J_\circ = \{3_\circ, 6_\circ, 9_\circ\}$. We consider the lifting function~$h(i_\bullet, j_\circ) = \sqrt{\ell(i_\sq, j_\sq)}$ which gives
\[
h(i_\bullet, j_\circ) =
\begin{blockarray}{cccccccc}
	0_\bullet & 1_\bullet & 2_\bullet & 4_\bullet & 5_\bullet & 7_\bullet & 8_\bullet & \\
	\begin{block}{(ccccccc)c}
	1 & \sqrt{2} & 0 & \infty & \infty & \infty & \infty & 3_\circ \\
	\sqrt{2} & 1 & \sqrt{3} & 0 & \sqrt{3} & \infty & \infty & 6_\circ \\
	2 & \sqrt{3} & \sqrt{3} & \sqrt{2} & 1 & 0 & 0 & 9_\circ \\
	\end{block}
\end{blockarray}
\]
The corresponding tropical hyperplane arrangement is represented in \fref{fig:tropicalHyperplaneArrangement}.
Oriented north-east, it coincides with the increasing flip graph represented in \fref{fig:lattice}.

\begin{figure}[h]
	\capstart
	\centerline{\includegraphics[scale=1]{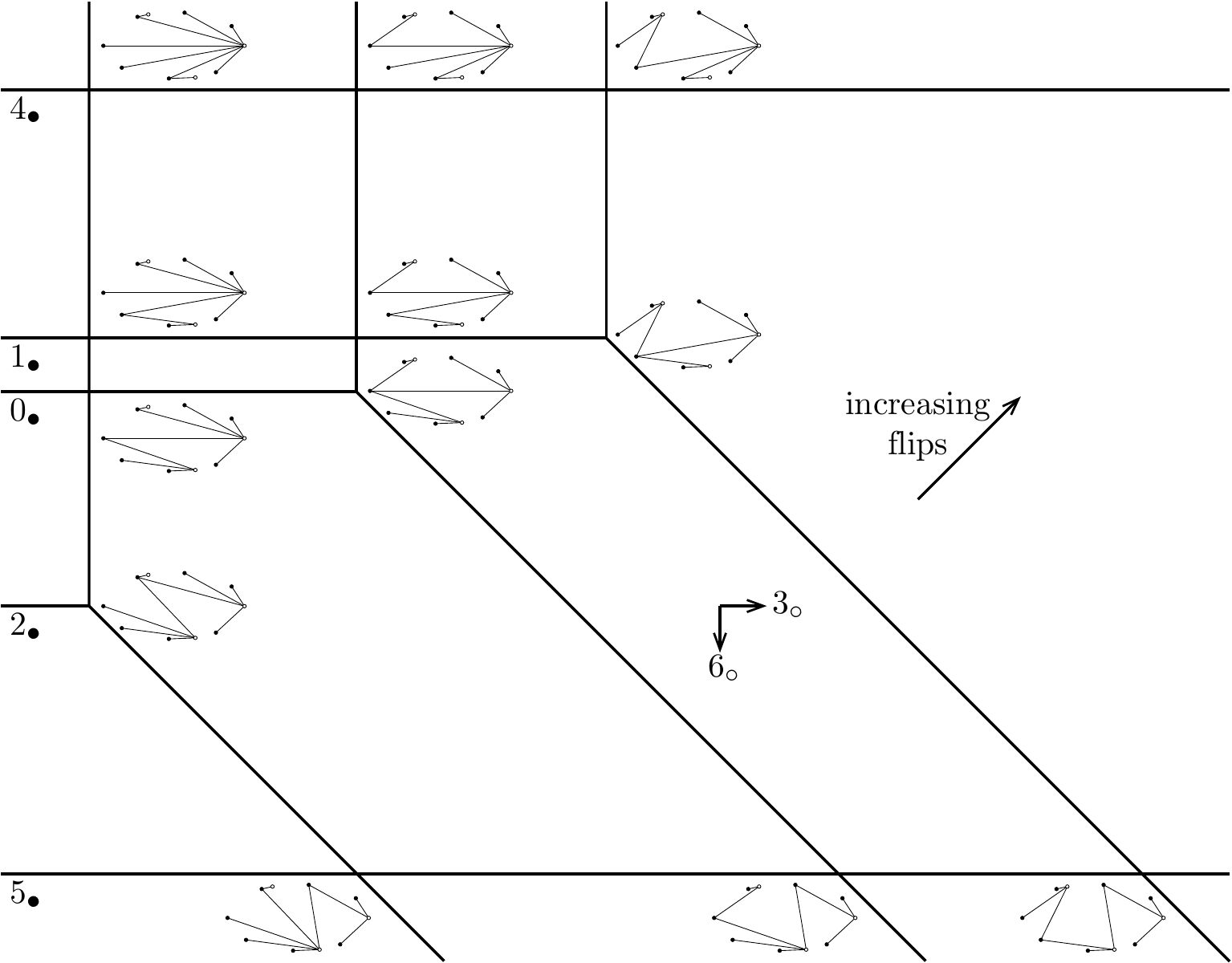}}
	\caption{The tropical realization of the $(\signature, I_\bullet, J_\circ)$-lattice. Here, ${\signature = {-}{+}{+}{-}{+}{-}{-}{+}}$, ${I_\bullet = \llb 8_\bullet ] \ssm \{3_\bullet, 6_\bullet\}}$ and~$J_\circ = \{3_\circ, 6_\circ, 9_\circ\}$. Compare to Figures~\ref{fig:lattice} and~\ref{fig:mixedSubdivision}. Note that~$H_{4_\bullet}$ and~$H_{5_\bullet}$ are degenerate tropical hyperplanes and that~$H_{7_\bullet}$ is at infinity.}
	\label{fig:tropicalHyperplaneArrangement}
\end{figure}

We have computed some coordinates of $(\signature, I_\bullet, J_\circ)$-trees in \fref{fig:coordinates}. For example,
\begin{align*}
g(\tree_{\min})_{3_\circ} & = -h(2_\bullet, 6_\circ) + h(5_\bullet, 6_\circ) - h(5_\bullet, 9_\circ) = -1, \\
\text{and}\qquad
g(\tree_{\min})_{6_\circ} & = h(5_\bullet, 6_\circ) - h(5_\bullet, 9_\circ) = \sqrt{3}-1.
\end{align*}

\begin{figure}[h]
	\capstart
	\centerline{
	\begin{tabular}{c@{\qquad}c@{\qquad}c}
	\includegraphics[scale=.57]{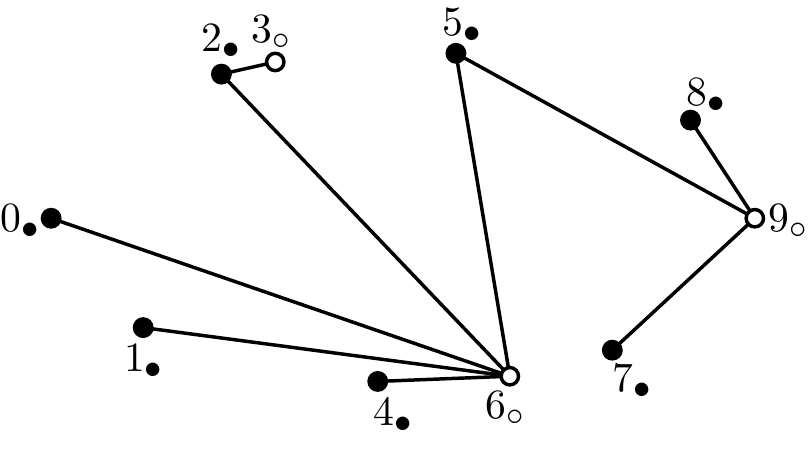} & \includegraphics[scale=.57]{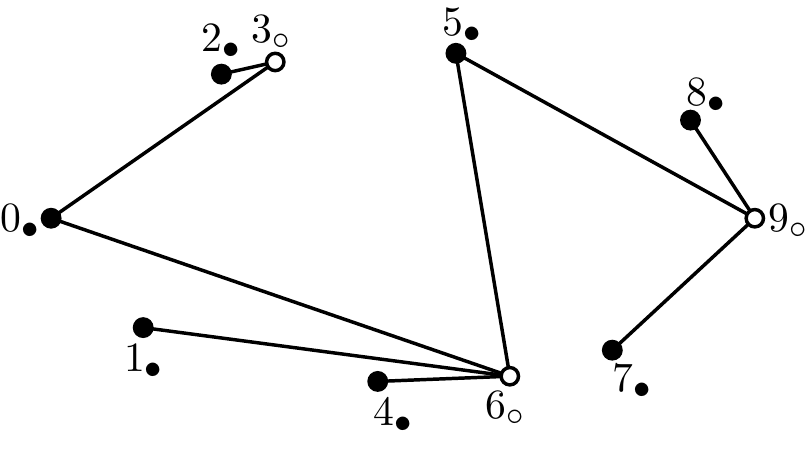} & \includegraphics[scale=.57]{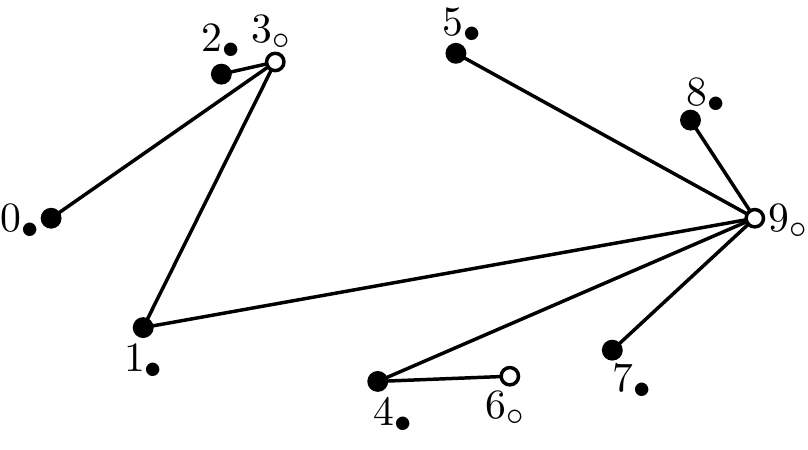} \\
	$g(\tree_{\min})_{3_\circ} = -1$ & $g(\tree)_{3_\circ} = \sqrt{3}-\sqrt{2}$ & $g(\tree_{\max})_{3_\circ} = \sqrt{2}-\sqrt{3}$ \\
	$g(\tree_{\min})_{6_\circ} = \sqrt{3}-1$ & $g(\tree)_{6_\circ} = \sqrt{3}-1$ & $g(\tree_{\max})_{6_\circ} = -\sqrt{2}$
	\end{tabular}
	}
	\caption{Examples of computation of coordinates.}
	\label{fig:coordinates}
\end{figure}

\end{example}

To conclude, let us gather all geometric realizations of the $(\signature, I_\bullet, J_\circ)$-lattice encountered in this paper (see Corollaries~\ref{coro:regionCambrianFan}, \ref{coro:dualTriangulation} and~\ref{coro:mixedSubdivision}, and Theorem~\ref{thm:tropical}).

\begin{theorem}
The increasing flip graph on~$(\signature, I_\bullet, J_\circ)$-trees can be realized geometrically as:
\begin{enumerate}
\item the dual of the collection of cones of the $\signature$-Cambrian fan of~\cite{ReadingSpeyer}, or of normal cones of the $\signature$-associahedron of~\cite{HohlwegLange}, corresponding to an interval of the $\signature$-Cambrian lattice,
\item the dual of a flag regular triangulation of the subpolytope~$\subpolyIJ$ of a product of simplices, 
\item the dual of a coherent fine mixed subdivision of a generalized permutahedron, 
\item the edge graph of a polyhedral complex defined by a tropical hyperplane arrangement. 
\end{enumerate}
\end{theorem}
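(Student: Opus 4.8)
The plan is to observe that the final theorem is purely a synthesis statement: each of its four items is an immediate restatement of a result already established in the paper, so the ``proof'' is essentially a dictionary assembling these references. I would therefore not prove anything new, but rather organize the four realizations and point to where each was obtained.

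First I would recall the combinatorial object at stake. By Theorem~\ref{thm:lattice} and Theorem~\ref{thm:interval}, the $(\signature, I_\bullet, J_\circ)$-increasing flip graph~$\IFGIJ$ is the Hasse diagram of the lattice~$\LatIJ$, which embeds as an interval of the $\signature$-Cambrian lattice. Item~(1) then follows directly from Corollary~\ref{coro:regionCambrianFan}: an interval of the $\signature$-Cambrian lattice corresponds to a connected set of maximal cones in the $\signature$-Cambrian fan of~\cite{ReadingSpeyer}, dually to a connected set of vertices of the $\signature$-associahedron of~\cite{HohlwegLange}, and the adjacency of these cones (resp.\ vertices) is exactly the increasing flip adjacency.

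Next, items~(2) and~(3) are read off from Section~4. Item~(2) is Corollary~\ref{coro:dualTriangulation} combined with Proposition~\ref{prop:regular}: the simplices~$\triangle_\tree$ of~$\CambTriangIJ$ are indexed by $(\signature, I_\bullet, J_\circ)$-trees, two simplices are adjacent precisely when the corresponding trees differ by a flip, and the triangulation is regular, hence a genuine flag regular triangulation of~$\subpolyIJ$. Item~(3) is Corollary~\ref{coro:mixedSubdivision}, obtained from item~(2) via the Cayley trick of~\cite{HuberRambauSantos}: the polytopes $\sum_{i_\bullet \in I_\bullet} \triangle_{\set{j_\circ \in J_\circ}{(i_\bullet, j_\circ) \in \tree}}$ form a coherent fine mixed subdivision of a generalized permutahedron, whose dual graph is again the flip graph. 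Finally, item~(4) is Theorem~\ref{thm:tropical}: the $(\signature, I_\bullet, J_\circ)$-associahedron~$\Asso$ is a polyhedral complex whose edge graph is the flip graph on $(\signature, I_\bullet, J_\circ)$-trees, and orienting in the linear direction $\sum_{j_\circ \in J_\circ \ssm \{\max(J_\circ)\}} \signature_j \, x_{j_\circ}$ recovers the increasing orientation~$\IFGIJ$.

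There is no real obstacle here, since every ingredient has been proven earlier; the only thing to be careful about is consistency of orientation conventions across the four models. In each case the increasing flip direction must be matched: for~(1) it is the linear functional inducing the $\signature$-Cambrian order, for~(2) and~(3) it is the sum of slopes of the edges of the tree (as in the proof that~$\IFGIJ$ is acyclic), and for~(4) it is the explicit linear form in Theorem~\ref{thm:tropical}. I would simply remark that these conventions were all chosen compatibly in the preceding sections, so that the four realizations describe the same oriented graph, namely the Hasse diagram of~$\LatIJ$.
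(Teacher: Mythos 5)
Your proposal is correct and matches the paper's intent exactly: the theorem is stated as a summary of Corollaries~\ref{coro:regionCambrianFan}, \ref{coro:dualTriangulation}, \ref{coro:mixedSubdivision} and Theorem~\ref{thm:tropical}, and the paper offers no further proof beyond these pointers. Your additional remark about matching orientation conventions across the four models is a sensible sanity check but not something the paper spells out.
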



\addtocontents{toc}{\vspace{.3cm}}
\section*{Acknowledgements}

I am grateful to C.~Ceballos, A.~Padrol and C.~Sarmiento for relevant comments and suggestions on the content of this paper.


\bibliographystyle{alpha}
\bibliography{tropicalCambrian}
\label{sec:biblio}

\end{document}